\newcommand{\C} {\mathbb{C}}
\newcommand{\Q} {\mathbb{Q}}
\newcommand{\N}  {\mathbb{N}}
\newcommand{\Z}{\mathbb{Z}}
\newcommand{\OO}{\mathcal{O}}
\newcommand{\mX}{\mathcal{X}}
\newcommand{\mY}{\mathcal{Y}}
\newcommand{\PP}{\mathbb{P}}
\newcommand{\NS}{\mathop{\rm NS}}
\newcommand{\Num}{\mathop{\rm Num}}
\newcommand{\MW}{\mathop{\rm MW}}
\newcommand{\MWL}{\mathop{\rm MWL}}
\newcommand{\Km}{\mathop{\rm Km}}
\newcommand{\rank}{\mathop{\rm rank}}
\newcommand{\Br}{\mathop{\rm Br}\nolimits}
\newcommand{\te}{\tilde E}
\newcommand{\Aut}{\mathop{\rm Aut}}
\newcommand{\Hom}{\mathop{\rm Hom}}
\newcommand{\Triv}{\mathop{\rm Triv}}
\newcommand{\Gal}{\mathop{\rm Gal}}
\newcommand{\Fix}{\mathop{\rm Fix}}
\newcommand{\jj}{\jmath}
\newcommand{\mat}[3]{\begin{pmatrix} #1 & #2\\#2 & #3\end{pmatrix}}
\newtheorem{Theorem}{Theorem}[section]
\newtheorem{Proposition}[Theorem]{Proposition}
\newtheorem{Lemma}[Theorem]{Lemma}
\newtheorem{Corollary}[Theorem]{Corollary}
\theoremstyle{remark}
\newtheorem{Remark}[Theorem]{Remark}
\newtheorem{Example}[Theorem]{Example}
\theoremstyle{definition}
\newtheorem{Conjecture}[Theorem]{Conjecture}
\newtheorem{Exception}[Theorem]{Exception}
\newtheorem{Problem}[Theorem]{Problem}
\begin{document}

\title{Arithmetic of singular Enriques Surfaces}

\dedicatory{Dedicated to the memory of Eckart Viehweg}

\author{Klaus Hulek}
\address{Institut f\"ur Algebraische Geometrie, Leibniz Universit\"at
  Hannover, Welfengarten 1, 30167 Hannover, Germany} 
\email{hulek@math.uni-hannover.de}

\author{Matthias Sch\"utt}
\email{schuett@math.uni-hannover.de}

\subjclass[2000]{14J28; 11E16, 11G15, 11G35, 14J27}

\keywords{Enriques surface, singular K3 surface, elliptic fibration, N\'eron-Severi group, Mordell-Weil group, complex multiplication}

\thanks{Partial support from DFG under grant Hu 337/6-1 is gratefully acknowledged.}

\date{November 28, 2010}

\maketitle

 \begin{abstract}
 We study the arithmetic of Enriques surfaces whose universal covers are singular K3 surfaces.
 If a singular K3 surface $X$ has discriminant $d$,
 then it has a model over the ring class field $H(d)$.
 Our main theorem is that the same holds true for any Enriques quotient of $X$.
 It is based on a study of N\'eron-Severi groups of singular K3 surfaces.
 We also comment on Galois actions on divisors of Enriques surfaces.
 \end{abstract}

\section{Introduction}

Enriques surfaces have formed a vibrant research area over the last 30 years.
In many respects, they 
share the properties  of K3 surfaces, yet in other aspects they behave differently.
This twofold picture is illustrated in this paper which investigates arithmetic aspects of Enriques surfaces.

The arithmetic of Enriques surfaces is only partially well-understood.
For instance, Bogomolov and Tschinkel proved that
potential density of rational points holds on Enriques surfaces \cite{BT}.
The cited work predates all substantial progress on K3 surfaces in the same direction.
In fact, until now the corresponding statement for K3 surfaces has not been proved in full generality.

In this paper, we investigate the arithmetic of those Enriques surfaces whose universal covers are singular K3 surfaces, i.e.~K3 surfaces with Picard number $\rho=20$.
We will refer to them as \emph{singular Enriques surfaces}.
Singular K3 surfaces are closely related to elliptic curves with complex multiplication (CM).
These structures will be crucial to our investigations;
often they explain arithmetic properties of singular K3 surfaces (see Sections \ref{s:K3} and \ref{s:class}).

We point out one particular property that illustrates these relations:
the field of definition.
A singular K3 surface of discriminant $d$ has a model over the ring class field $H(d)$
just like elliptic curves with CM in an order of discriminant $d$ by \cite[Prop.~4.1]{S-fields}.
Our main theorem states how this property carries over to Enriques surfaces:

\begin{Theorem}
\label{thm}
Let $Y$ be an Enriques surface whose universal cover $X$ is a singular K3 surface.
Let $d<0$ denote the discriminant of $X$.
Then $Y$ admits a model over the ring class field $H(d)$.
\end{Theorem}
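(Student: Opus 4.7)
The plan is to leverage the fact, established in [S-fields, Prop.~4.1], that the singular K3 surface $X$ admits a model $X_0$ over the ring class field $H(d)$. Since the universal cover $X \to Y$ is the quotient by the Enriques involution $\tau \colon X \to X$, the natural first attempt is to exhibit $\tau$ as an $H(d)$-rational automorphism of $X_0$, in which case the quotient $X_0 / \tau$ is an Enriques surface over $H(d)$ whose base change to $\C$ recovers $Y$.

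The argument proceeds in two main steps. First, one studies the action of $G = \Gal(\bar{H(d)}/H(d))$ on $\Aut(X_{\bar{H(d)}})$ and, more specifically, on the set $\mathcal{E}$ of Enriques involutions of $X$. For any $\sigma \in G$, the conjugate $\tau^\sigma$ is again a fixed-point-free involution acting as $-1$ on the canonical class, so $\mathcal{E}$ is a $G$-stable subset. Second, one uses the study of $\NS(X)$ announced in the abstract, together with the global Torelli theorem, to force $\tau^\sigma = \tau$. Recall that an Enriques involution is determined by its action on $H^2(X,\Z) = \NS(X) \oplus T(X)$: it acts as $-1$ on the transcendental lattice $T(X)$ and has invariant sublattice in $\NS(X)$ of isometry type $U(2) \oplus E_8(2)$. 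The CM structure carried by $T(X)$ makes the $G$-action on the transcendental part transparent; the heart of the matter is to control the $G$-action on $\NS(X)$ and to show that it preserves the $\tau$-decomposition in a way that makes $\tau$ commute with Galois.

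Should $\tau$ itself fail to be $H(d)$-rational, one falls back on a descent argument: for each $\sigma \in G$ it suffices to produce an automorphism $f_\sigma \in \Aut(X_{\bar{H(d)}})$ conjugating $\tau^\sigma$ into $\tau$ and to verify the cocycle condition. This still yields an $H(d)$-model of $Y$, since $Y = X_0 / \tau$ descends whenever its isomorphism class over $\bar{H(d)}$ carries a compatible Galois-equivariant structure; in particular, if all conjugates $\tau^\sigma$ lie in a single $\Aut(X)$-orbit, the resulting descent datum produces an Enriques surface $Y_0/H(d)$.

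The main obstacle is therefore controlling the Galois action on $\NS(X)$ with enough precision. While the CM structure of the singular K3 pins down the action on the transcendental part, one must translate this into precise statements about the classes in $\NS(X)$ — in particular, about those classes distinguishing the $\tau$-invariant sublattice $U(2) \oplus E_8(2)$ — to conclude that $\tau$ either descends, or differs from its Galois conjugates only by conjugation within $\Aut(X_{\bar{H(d)}})$. This lattice-theoretic bookkeeping, combined with an appeal to Torelli to promote the cohomological equality to an equality of scheme automorphisms, is the technical heart of the theorem.
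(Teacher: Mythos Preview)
Your strategy matches the paper's: show the Enriques involution $\tau$ is $H(d)$-rational by comparing $\tau$ with each conjugate $\tau^\sigma$ via the Torelli theorem, noting that both act as $-1$ on $T(X)$ so that the entire question reduces to the Galois action on $\NS(X)$. This is precisely the content of Proposition~\ref{Lem} and Corollary~\ref{Cor}.

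Where your outline stays vague is exactly where the paper's real work lies. You propose to take an arbitrary model from \cite{S-fields}, analyse how Galois moves the $\tau^*$-invariant sublattice $U(2)+E_8(-2)$ inside $\NS(X)$, and fall back on a descent cocycle $\{f_\sigma\}$ if $\tau$ is not literally Galois-fixed. The paper instead proves the stronger and cleaner statement (Theorem~\ref{Thm:NS-H(d)}): $X$ admits a model over $H(d)$ on which Galois acts \emph{trivially} on $\NS(X)$. For that model $(\tau^\sigma)^*|_{\NS(X)}=\tau^*|_{\NS(X)}$ holds tautologically, so Torelli gives $\tau^\sigma=\tau$ immediately and no descent argument is ever needed. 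The construction of this good model occupies Sections~\ref{s:K3}--\ref{s:Kummer} and uses Inose's pencil, quadratic twisting to rationalise Mordell--Weil generators, and (for the delicate two-torsion cases in Table~\ref{Tab:aut}) Shioda's Galois-equivariant isomorphism $\MWL(X')\cong\Hom(E,E')$ on the Kummer surface in the Shioda--Inose structure together with a comparison of the ring class fields $H(d)$ and $H(4d)$.

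Your fallback descent, by contrast, would require checking a cocycle condition for the $f_\sigma$ and some form of $H^1$-vanishing for $\Aut(X)$ over $H(d)$, neither of which you supply; as stated it is incomplete, and in light of Theorem~\ref{Thm:NS-H(d)} it is also unnecessary.
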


The proof of Theorem \ref{thm} consists in two steps:
first we establish a general result for automorphisms of K3 surfaces over number fields (Proposition \ref{Lem});
then we extend the afore-mentioned results for fields of definition of singular K3 surfaces 
to include their N\'eron-Severi groups (Theorem \ref{Thm:NS-H(d)}).
Here we combine two approaches that both rely on elliptic fibrations.
In Section \ref{s:K3} we review the theory of singular K3 surfaces
and use Inose's pencil and the theory of Mordell-Weil lattices 
to deduce Theorem \ref{Thm:NS-H(d)} for most singular K3 surfaces (see Remark \ref{Rem}).
On the other hand, Section \ref{s:Kummer} provides a direct approach for 
those singular K3 surfaces which are Kummer (Corollary \ref{Cor:Km}).
Through Shioda--Inose structures,
we then connect the two partial results 
and are thus able to give a full proof of Theorem \ref{Thm:NS-H(d)} (see \ref{ss:sum}).


In Section \ref{s:Enr} we  address explicit questions.
Lattice theoretically one can determine
all singular K3 surfaces that admit an Enriques involution.
With 61 or 62 exceptions,
we give an explicit geometric construction of an Enriques involution on these singular K3 surfaces.
This construction combines Shioda-Inose structures (\ref{ss:SI})
and the base change approach from \cite[\S 3]{HS}.

In Section \ref{s:class}
we discuss the problem of Galois action on N\'eron-Severi groups.
In this context, a different picture arises for Enriques surfaces than for K3 surfaces. 
The paper concludes with a formulation of several interesting classification problems for Enriques surfaces and K3 surfaces.

\smallskip

{\fontsize{9pt}{11pt}\selectfont

\noindent {\bfseries Acknowledgements}: 
We would like to thank Bas Edixhoven and Jaap Top for useful comments.
We are grateful to the referees for many helpful suggestions and remarks.
This project was started when the second author held a position at University of Copenhagen.
\par}

\section{Automorphisms of K3 surfaces}

\subsection{Basics about K3 surfaces and Enriques surfaces}

This paper is concerned with complex algebraic K3 surfaces and Enriques surfaces.
Here we briefly review their basic properties.
For details the reader is referred to \cite[Chapter VIII]{BHPV};
 information and examples relevant for this paper can also be found in \cite{HS}.

A \emph{K3 surface} $X$ is a smooth projective surface 
with trivial canonical bundle $\omega_X\cong\OO_X$ that is simply connected.
The classical example consists in a smooth quartic in $\PP^3$;
here we will mostly work with elliptic K3 surfaces and Kummer surfaces.

In terms of the Enriques--Kodaira classification of algebraic surfaces,
a complex \emph{Enriques surface} $Y$
is a smooth projective surface with vanishing irregularity $q(Y)=h^1(Y,\OO_Y)=0$ 
and $\omega_Y^{\otimes 2} =\OO_Y$, but $\omega_Y\neq\OO_Y$.
Equivalently $Y$is the quotient  of a K3 surface $X$ by a fixed point free involution $\tau$.
Conversely the K3 surface $X$ can be recovered as the universal covering of $Y$.


The \emph{N\'eron-Severi group} $\NS(S)$ of an algebraic surface $S$
is the group of divisors up to algebraic equivalence.
Here we identify divisors moving in families such as fibers of a fibration.
The N\'eron-Severi group is finitely generated abelian;
its rank is called the \emph{Picard number} and denoted by $\rho(S)$.
In essence, $\NS(S)$ encodes the discrete structure of the Picard group of $S$.
The intersection pairing endows $\NS(S)$ with a quadratic form
that also induces the notion of numerical equivalence.

On a K3 surface algebraic and numerical equivalence coincide,
and $\NS(S)$ is torsion-free.
Equipped with the intersection form, it becomes an even lattice of signature $(1,\rho(S)-1)$,
the \emph{N\'eron-Severi lattice}.
On an Enriques surface, however,
algebraic and numerical equivalence do not coincide,
as in $\NS(Y)$ there is two-torsion represented by the canonical divisor $K_Y$.
The quotient gives the torsion-free group of divisors up to numerical equivalence:
\[
\Num(Y) = \NS(Y)/\{0, K_Y\}.
\]
The intersection pairing endows $\Num(Y)$ with a lattice structure.
Contrary to the K3 case, this lattice has always the same rank and abstract shape:
\[
\Num(Y) = U + E_8(-1),\;\;\; \mbox{rank}(\Num(Y))=10
\]
where $U$ denotes the hyperbolic plane $\Z^2$ with intersection pairing $\begin{pmatrix}0&1\\1&0\end{pmatrix}$
and $E_8$ is the unique even unimodular positive-definite lattice of rank 8.
The $-1$ indicates that the sign of the intersection form is reversed
so that $\Num(Y)$ has signature $(1,9)$ as predicted by the Hodge index theorem.

The Torelli theorem \cite{PSS} reduces many investigations of complex K3 surfaces $X$ to a study of $H^2(X)$
with its different structures as lattice or Hodge structure.
By the cycle class map, $H^2(X)$ contains an algebraic part
coming from 
$\NS(X)$. 
The orthogonal complement of $\NS(X)$ in $H^2(X,\Z)$ is called the
\emph{transcendental lattice}:
\[
T(X) = \NS(X)^\bot\subset H^2(X,\Z).
\]
As another characterisation, $T(X)$ is the smallest primitive sublattice of $H^2(X,\Z)$ 
that contains the (up to scalar unique) 2-form $\eta_X$ after complexifying.

\subsection{Surfaces over number fields}

We will consider complex surfaces $S$ 
that admit a model over some number field.
This arithmetic setting brings up the natural question
whether geometric objects such as $\NS(S)$ or the automorphism group $\Aut(S)$
are defined over the same field. The problem is as follows:

Let $X$ be a complex K3 surface defined over a number field $L$.
The action of its absolute Galois group  $G_L=\text{Gal}(\bar L/L)$ on $\NS(X)$ factors through a finite extension $M/L$.
We say that $\NS(X)$ is defined over $L$ if $M=L$, i.e.~if $G_L$ acts trivially on $\NS(X)$.
Throughout this paper, we will verify this property by exhibiting a set of generators of $\NS(X)$ each of which is defined over $L$.
In fact, for elliptic surfaces with section (which we will mostly be concerned with), both conditions are equivalent.
%

The same terminology is employed for an Enriques surface $Y$ 
by saying that $\NS(Y)$ or $\Num(Y)$ is defined over a number field $L$
if $G_L$ acts trivially.


Let $\psi$ be an automorphism of a complex K3 surface $X$.
Since we assumed $X$ to be algebraic,
the induced automorphism $\psi^*$ acts as multiplication by a root of unity $\zeta$ on the holomorphic 2-form $\eta_X$.
We assume that $X$ is defined over some number field.
The next proposition gives a criterion for the field of definition of $\psi$.
This criterion will be crucial for the proof of Theorem \ref{thm}.

\begin{Proposition}
\label{Lem}
Let $X$ be a K3 surface over some number field $L$.
Let $\psi\in\Aut(X)$ and $\zeta\in\bar\Q$ such that $\psi^*\eta_X=\zeta\eta_X$.
Assume that $\NS(X)$ is defined over $L$ and $\zeta\in L$.
Then $\psi$ is defined over $L$.
\end{Proposition}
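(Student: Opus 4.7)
The plan is to apply the strong Torelli theorem for complex K3 surfaces, according to which an automorphism of $X$ is uniquely determined by the induced isometry on $H^2(X,\Z)$. To verify that $\psi$ is defined over $L$ it thus suffices, for each $\sigma\in G_L$, to set $\phi:=\psi^\sigma\circ\psi^{-1}$ and prove that $\phi^*$ acts as the identity on $H^2(X,\Z)$; this will force $\phi=\mathrm{id}_X$, i.e.~$\psi^\sigma=\psi$.

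For the algebraic part, the assumption that $\NS(X)$ is defined over $L$ means that every class $D\in\NS(X)$ is $G_L$-invariant. Naturality of pullback then gives
\[
(\psi^\sigma)^*D=(\psi^*D)^\sigma=\psi^*D,
\]
since $\psi^*D$ also lies in $\NS(X)$, hence is $\sigma$-fixed. Thus $\phi^*$ is the identity on $\NS(X)$.

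For the transcendental part, choose a generator $\eta_X$ of $H^0(X,\Omega^2_X)$ defined over $L$, so that $\eta_X^\sigma=\eta_X$. Using $\zeta\in L$, the same naturality principle yields
\[
(\psi^\sigma)^*\eta_X=(\psi^*\eta_X)^\sigma=(\zeta\eta_X)^\sigma=\zeta\eta_X,
\]
whence $\phi^*\eta_X=\eta_X$. I now appeal to the classical rigidity result of Nikulin: an automorphism of a complex K3 surface which acts trivially on $\NS(X)$ acts on $T(X)$ through a finite cyclic group whose order equals the order of its action on $\eta_X$. In the present situation that order is $1$, so $\phi^*$ is the identity on $T(X)$.

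Combining the two steps, $\phi^*$ restricts to the identity on the finite-index sublattice $\NS(X)\oplus T(X)\subset H^2(X,\Z)$; since $\phi^*$ is integral, it must be the identity on all of $H^2(X,\Z)$, and strong Torelli finishes the argument. The conceptual heart of the proof is the transcendental step: the Galois computations on $\NS(X)$ and on $\eta_X$ are purely formal, whereas the passage from $\phi^*\eta_X=\eta_X$ to $\phi^*|_{T(X)}=\mathrm{id}$ relies on the rigidity of the representation of $\Aut(X)$ on $T(X)$.
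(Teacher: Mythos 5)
Your main computation follows the paper's own route: conjugate $\psi$ by a Galois element, check that the induced actions agree on $\NS(X)$ (formal, using the triviality of the Galois action there) and on the transcendental part via $\eta_X$ (using $\zeta\in L$), and conclude by the strong Torelli theorem. Phrasing it through $\phi=\psi^\sigma\circ\psi^{-1}$ and invoking the rigidity of the $\Aut(X)$-representation on $T(X)$ is fine; in fact you need less than the cited Nikulin statement, since $T(X)$ is by definition the smallest primitive sublattice of $H^2(X,\Z)$ whose complexification contains $\eta_X$, and the fixed lattice of $\phi^*$ is primitive and contains $\eta_X$ after complexification, so $\phi^*\eta_X=\eta_X$ already forces $\phi^*|_{T(X)}=\mathrm{id}$.

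There is, however, a genuine gap at the very start: you write ``for each $\sigma\in G_L$, set $\phi:=\psi^\sigma\circ\psi^{-1}$,'' but a priori $\psi$ is only an automorphism of the complex surface $X_\C$, and $\psi^\sigma$ is meaningless for $\sigma\in\Gal(\bar L/L)$ unless one first knows that $\psi$ is defined over $\bar\Q$ (equivalently over some finite extension $M/L$); the same issue blocks the final Galois descent, which only applies to objects defined over $\bar L$. The paper spends its first paragraph precisely on this point: since $\Aut(X)$ is discrete for an algebraic K3 surface (Sterk), $\psi$ cannot require transcendental parameters --- otherwise varying those parameters would produce a positive-dimensional family of automorphisms --- hence $\psi$ is defined over a number field, which one may take Galois over $L$. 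Without this step (or an alternative setup using $\Aut(\C/L)$ together with a justification that its action on $\NS(X)$ and on $\eta_X$ behaves as you use and that invariance under $\Aut(\C/L)$ implies definition over $L$), your argument does not yet prove the statement. Adding the discreteness argument, or citing it as the paper does, closes the gap; the rest of your proof then goes through.
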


\begin{proof}
We first need to show that $\psi$ is defined over some number field.
Essentially this holds true 
because the automorphism group of any algebraic K3 surface is discrete by \cite[Thm.~0.1]{Sterk}.
The general idea is well-known:
if the field of definition of $\psi$ were to require a transcendental extension of $L$,
then the transcendental generators of this extension could be turned into parameters,
so that $\psi$ would come in a non-discrete family of automorphisms.

Now suppose that $\psi$ is defined over some finite extension $M/L$.
We want to apply the Torelli theorem \cite{PSS} to $\psi$ and its conjugates to deduce that $M=L$. 
For this purpose, we assume without loss of generality that $M/L$ is Galois.
Let $\sigma\in\mbox{Gal}(M/L)$.
Then $\psi^\sigma\in\Aut(X)$, and we claim that $\psi=\psi^\sigma$.
Explicitly we can write 
\[
\psi^\sigma=\sigma\circ\psi\circ\sigma^{-1}.
\]
By the Torelli theorem, it suffices to verify the claim for the induced action on $\NS(X)$ and $T(X)$.
For $\NS(X)$ this follows directly from the fact that 
$\sigma$ and $\sigma^{-1}$ act trivially by assumption.
For $T(X)$, it suffices to check the action on the holomorphic $2$-form.
One has
\[
(\psi^\sigma)^*(\eta_X) = 
(\sigma^{-1})^*\circ\psi^*(\eta_X) =
(\sigma^{-1})^*(\zeta\eta_X)
 = \zeta^\sigma \eta_X = \psi^*(\eta_X)
\]
since $\zeta\in L$.
Hence $\psi^* = (\psi^\sigma)^*$ on $H^2(X,\Z)$, and the claim $\psi=\psi^\sigma$ follows from the Torelli theorem \cite{PSS}.
In consequence, $\psi$ is defined over $L$.
\end{proof}

\begin{Remark}
The conditions of Proposition \ref{Lem} are sufficient, but not necessary.
For instance, we exhibited a K3 surface with an Enriques involution over $\Q$,
but with $\NS(X)$ defined over $\Q(\sqrt{-3})$ in \cite[\S 5.3]{HS} (see also \ref{ss:Gal}).
\end{Remark}

\subsection{Enriques involutions}

Proposition \ref{Lem} has an immediate impact on involutions, and in particular on Enriques involutions.
Namely for an involution $\psi$, the eigenvalue of $\eta_X$ can only be $\zeta=\pm 1$, 
so Proposition \ref{Lem} only requires the N\'eron-Severi group of the covering K3 surface to be defined over $L$:

\begin{Corollary}
\label{Cor}
Let $X$ be a K3 surface over some number field $L$.
If $\NS(X)$ is defined over $L$,
then so is every involution on $X$.
In particular, this holds for Enriques involutions.
\end{Corollary}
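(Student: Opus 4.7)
The plan is to derive this directly from Proposition \ref{Lem}, using the fact that an involution can only have a very restricted action on the holomorphic 2-form.

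First I would let $\psi \in \Aut(X)$ be an involution and consider the scalar $\zeta \in \bar\Q$ determined by $\psi^*\eta_X = \zeta \eta_X$. Since $\psi^2 = \mathrm{id}_X$, applying $(\psi^*)^2$ to $\eta_X$ gives $\zeta^2 \eta_X = \eta_X$, so $\zeta^2 = 1$ and therefore $\zeta = \pm 1$. In particular $\zeta \in \Q \subset L$, regardless of what $L$ is, so the hypothesis on $\zeta$ in Proposition \ref{Lem} is automatic.

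Next I would invoke Proposition \ref{Lem}: by assumption $\NS(X)$ is defined over $L$, and we have just checked that the eigenvalue $\zeta$ of $\psi^*$ on $\eta_X$ lies in $L$. The proposition therefore applies and yields that $\psi$ is defined over $L$. The second assertion is immediate since an Enriques involution is, by definition, a (fixed-point-free) involution on $X$.

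The main conceptual content here really sits in Proposition \ref{Lem}; the work of this corollary is just observing that involutions force $\zeta = \pm 1$, so there is no serious obstacle to overcome. One minor point worth being explicit about is that $\eta_X$ is nonzero (a K3 surface has a one-dimensional space of holomorphic 2-forms), so the relation $\zeta^2 = 1$ genuinely follows and $\zeta$ is well-defined; beyond that the argument is essentially a one-line specialization of the proposition.
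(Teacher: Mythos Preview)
Your proof is correct and follows exactly the paper's approach: observe that an involution forces $\zeta=\pm 1\in L$ and then apply Proposition~\ref{Lem}. The paper in fact treats this corollary as immediate from that observation, so your write-up just makes explicit what the paper states in one line.
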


Theorem \ref{thm} requires some concepts that we will discuss in detail in the next section.
It concerns K3 surfaces with Picard number 20, the so-called \emph{singular K3 surfaces} (see \ref{ss:sing}).
By definition, the discriminant of a singular K3 surface $X$ 
is the determinant of the intersection form on $\NS(X)$.
For a singular K3 surface, the discriminant $d$ gives rise to a very particular number field,
the ring class field $H(d)$ as we discuss in \ref{ss:K3-fields}.
In order to deduce Theorem \ref{thm},
it suffices to combine Corollary \ref{Cor} 
with the following result for any singular K3 surface (admitting an Enriques involution):

\begin{Theorem}
\label{Thm:NS-H(d)}
Let $X$ be a singular K3 surface of discriminant $d$.
Consider the ring class field $H(d)$.
Then $X$ admits a model over $H(d)$ with $\NS(X)$ defined over $H(d)$.
\end{Theorem}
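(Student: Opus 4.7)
\smallskip

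\noindent\textbf{Proof plan.} The existence of a model of $X$ over $H(d)$ is already known by \cite[Prop.~4.1]{S-fields}, so the whole task reduces to proving that $\NS(X)$ itself becomes Galois invariant over $H(d)$, i.e.\ that we can exhibit a set of generators each of which is defined over $H(d)$. Since $\rho(X)=20$, the lattice $\NS(X)$ has rank $20$, so we need to produce $20$ independent divisor classes and check their fields of definition. The plan is to attack this via elliptic fibrations on $X$, so that $\NS(X)$ is controlled by the Shioda formula $\NS(X)=\Triv(X)\oplus\MWL(X)$: the trivial lattice is spanned by the zero section, a fiber and the fiber components, while the Mordell--Weil lattice is generated by sections. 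On both pieces, Galois invariance can be checked explicitly.

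My first step would be to handle the \emph{generic} singular K3 surface by means of Inose's pencil. After a suitable quadratic twist, Inose's pencil exhibits $X$ as an elliptic fibration with two fibers of type $II^*$ and an explicit trivial lattice of rank $19$; the missing rank one is provided by a section whose height pairing matches the discriminant $d$ (up to the known $II^*$ contributions). The base of the pencil and the reducible fibers can be arranged over $H(d)$ using the known models of $X$ in \cite{S-fields}, so the trivial lattice is immediately $H(d)$-rational. The key point is then that the extra section generating $\MWL(X)$ is defined over the field of definition of the fibration: because Inose's pencil has Mordell--Weil rank $1$, the generator is determined up to sign by its height, so a Galois conjugate must agree with it up to sign and torsion. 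This gives Theorem \ref{Thm:NS-H(d)} in the range where Inose's pencil applies (see Remark \ref{Rem}).

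In parallel I would treat the Kummer case directly. If $X=\Km(A)$ with $A=E_1\times E_2$ and $E_i$ CM elliptic curves, then by the classical CM theory both $E_i$ and their torsion structure (at least the $2$-torsion needed to construct $X$) can be realized over $H(d)$ (possibly after passing to a twist). The sixteen exceptional curves of the Kummer construction, together with the pull-backs of divisor classes on $A$, generate $\NS(X)$, and each of them is defined over the field over which $A$ and its $2$-torsion live. This yields Corollary \ref{Cor:Km}.

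Finally, I would glue the two cases via Shioda--Inose structures: every singular K3 surface $X$ sits in a diagram with a Kummer partner $\Km(A)$ sharing the same transcendental lattice, and the rational map $X\dashrightarrow \Km(A)$ induces a Hodge isometry of transcendental lattices. By the Torelli theorem this Shioda--Inose structure is determined by lattice data, which is Galois-invariant, and one can arrange the two partners together with the quotient maps to be defined over a common field, namely $H(d)$. Pulling back the $H(d)$-rational generators of $\NS(\Km(A))$ (together with the exceptional divisors of the rational quotient) then produces a generating set of $\NS(X)$ over $H(d)$ in precisely the cases not covered by Inose's pencil. The main obstacle I anticipate is the second step: pinning down the field of definition of the Mordell--Weil generator of Inose's pencil, since this requires tracking how complex multiplication data of the associated elliptic curves propagates through the Kummer and Shioda--Inose constructions into the height pairing on $\MWL(X)$.
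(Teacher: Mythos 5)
Your overall architecture (Inose's pencil for the trivial lattice, a direct argument for Kummer surfaces, Shioda--Inose/sandwich structures to glue) is the same as the paper's, but there are two genuine gaps at exactly the points where the real work happens. First, you miscount the Mordell--Weil rank of Inose's pencil: the trivial lattice $U+2E_8(-1)$ coming from the zero section and the two $II^*$ fibres has rank $18$, not $19$, so in the generic case $E\not\cong E'$ the fibration has $\MW$-rank \emph{two} (rank one occurs only when $E\cong E'$ and an extra $I_2$ fibre appears). Your argument that ``the generator is determined up to sign by its height, so a Galois conjugate must agree with it up to sign'' therefore only disposes of the rank-one case. In rank two, $\Gal$ acts on $\MWL(X)$ through the automorphism group of a positive-definite binary even form, which is strictly larger than $\{\pm 1\}$ precisely when the form is two-torsion in its class group; these are the hard cases, and no quadratic (or biquadratic/sextic) twist alone removes the action. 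The paper isolates them via a classification of $\Aut(Q)$ and then needs the entire Kummer/sandwich machinery to handle the $(\Z/2\Z)^2$ cases.

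Second, your gluing step does not descend far enough. For the Kummer partner $X'=\Km(E\times E')$ the natural field of definition of $\NS(X')$ is $H(4d)$, not $H(d)$: the fibre components of the $I_0^*$ fibres and the two-torsion sections of the isotrivial fibration are defined over the field generated by the $2$-torsion of $E$ and $E'$, which is exactly $H(4d)$ (note $X'$ has discriminant $4d$, so this is consistent with the theorem for $X'$ itself, but not what you need for $X$). Pulling back $H(4d)$-rational generators of $\NS(X')$ to $X$ would only give $\NS(X)$ over $H(4d)$. The paper closes this gap by a separate comparison of two models of $X$ (one inherited from $X'$ over $H(4d)$, one the twisted Inose form over $H(d)$) together with the observation that $[H(4d):H(d)]\in\{1,2,3\}$, playing a putative quadratic field of definition of the second section against the cubic extension $H(4d)/H(d)$ to force it to collapse. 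Relatedly, your Kummer step assumes the graphs of isogenies $E\to E'$ (which, via Shioda's isomorphism $\Hom(E,E')\cong\MWL(X',\pi)$, supply the transcendental-direction generators of $\NS(X')$) are automatically defined over $H(d)$; this is false in general --- Gross's non-$\Q$-curves show an isogeny need not be rational over $H(d)$ --- and the paper must first produce one $H(d)$-rational section by twisting and then generate the rest using CM endomorphisms of $E'$.
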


The statement about a model over the ring class field $H(d)$ has been known before 
(cf.~\cite[Prop.~4.1]{S-fields}),
but the extension for the N\'eron-Severi group seems to have gone unnoted until now.
A proof will be given in the next two sections after reviewing the previous relevant results on singular K3 surfaces.
We conclude this section with a direct corollary:


\begin{Corollary}
\label{Cor:Num}
Let $Y$ be an Enriques surface whose universal cover $X$ is a singular K3 surface.
Let $d<0$ denote the discriminant of $X$.
Then $Y$ admits a model over the ring class field $H(d)$ with $\Num(Y)$ defined over $H(d)$.
\end{Corollary}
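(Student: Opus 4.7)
The plan is to combine Theorem \ref{Thm:NS-H(d)} with Corollary \ref{Cor} in a direct, three-step argument; essentially no new ideas beyond the results already stated are required.

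First, apply Theorem \ref{Thm:NS-H(d)} to the universal cover $X$. This produces a model of $X$ over the ring class field $L=H(d)$ such that the absolute Galois group $G_L$ acts trivially on $\NS(X)$. In particular, $\NS(X)$ is defined over $L$ in the sense introduced in the section on surfaces over number fields.

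Second, invoke Corollary \ref{Cor} with this field $L$: every involution on $X$, and in particular the Enriques involution $\tau$ that realizes $Y=X/\tau$, is automatically defined over $L=H(d)$. Therefore the quotient surface $Y$ inherits a model over $H(d)$ by descent of the free $\langle\tau\rangle$-action on $X$.

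Third, deduce that $\Num(Y)$ is defined over $H(d)$. The covering map $\pi\colon X\to Y$ is now defined over $H(d)$, so the pullback $\pi^{*}\colon \Num(Y)\to\NS(X)$ is $G_L$-equivariant and injective (its image is contained in the $\tau^{*}$-invariant part of $\NS(X)$, with intersection form scaled by $2$). Since $G_L$ acts trivially on $\NS(X)$ by Theorem \ref{Thm:NS-H(d)}, it acts trivially on the image of $\pi^{*}$ and hence on $\Num(Y)$. There is no real obstacle in the argument: the only place where one has to be slightly careful is the passage from $\NS(X)$ to $\Num(Y)$ via $\pi^{*}$, but this is a standard consequence of the fact that $\tau$ acts freely and $K_Y$ becomes trivial after pulling back to the K3 cover.
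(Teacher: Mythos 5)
Your argument is correct and is exactly the route the paper intends: the corollary is stated as a direct consequence of Theorem \ref{Thm:NS-H(d)} (giving a model of $X$ over $H(d)$ with $\NS(X)$ defined there) combined with Corollary \ref{Cor} (so the Enriques involution, hence the quotient $Y$, descends to $H(d)$), with the triviality of the Galois action on $\Num(Y)$ read off from the $G_{H(d)}$-equivariant embedding $\Num(Y)\hookrightarrow\NS(X)$ via $\pi^*$. The paper uses this same identification of the action on $\Num$ with that on the invariant part of $\NS(X)$ explicitly in the proof of Lemma \ref{Lem:Y_a}, so no further comment is needed.
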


The corresponding statement for $\NS(Y)$ does not hold true in general,
as we will discuss within the framework of Galois actions on divisors in \ref{ss:NS}
(see Example \ref{Ex:Gal}, Corollary \ref{Lem:B}).

\section{Arithmetic of singular K3 surfaces}
\label{s:K3}

This section will review those parts of the theory of singular K3 surface that are relevant to our issues.
The section culminates in Lemma \ref{Lem:MW1},
the main step towards the proof of Theorem \ref{Thm:NS-H(d)}.
It is based on Shioda-Inose structures and Inose's fibration.
All the required techniques will be explained along the way.

\subsection{Singular K3 surfaces}
\label{ss:sing}

A complex K3 surface $X$ is called \emph{singular} if its Picard number $\rho(X)=\rank \NS(X)$ equals the maximum number allowed by Lefschetz' theorem:
\[
\rho(X) = h^{1,1}(X) = 20.
\]
Singular K3 surfaces involve no moduli,
so the terminology "singular" should be understood in the sense of exceptional
(just like for singular j-invariants of elliptic curves with complex multiplications,
a similarity that will become clear very soon).
We will discuss fields of definition of singular K3 surfaces in \ref{ss:K3-fields}.
Recently singular K3 surfaces over $\Q$
have gained some prominence due to modularity;
namely, in analogy with the Eichler-Shimura correspondence 
between modular forms of weight 2 and elliptic curves over $\Q$,
for any suitable modular form of weight 3 there is a singular K3 surface over $\Q$ associated
 (cf.~\cite{ES}).


By the Torelli theorem \cite{PSS}, \cite{SI},
singular K3 surfaces are classified up to isomorphism by their 
transcendental lattices.
For a singular K3 surface, the transcendental lattice is even and positive definite of rank two
and endowed with an orientation.
Up to conjugation in $\mbox{SL}_2(\Z)$, we identify it with the quadratic intersection form
\begin{eqnarray}
\label{eq:Q}
Q(X) = \begin{pmatrix}
2a & b\\
b & 2c
\end{pmatrix}
\end{eqnarray}
with integer entries $a,c\in\N, b\in\Z$ and discriminant $d=b^2-4ac<0$.
This number equals the determinant of the intersection form on $\NS(X)$;
we refer to it as the \emph{discriminant} of $X$.
By the Torelli theorem \cite{PSS}, \cite{SI} two singular K3 surfaces are isomorphic 
if and only if the transcendental lattices admit an isometry preserving the orientation 
(or equivalently the quadratic forms are conjugate in $\mbox{SL}_2(\Z)$).

The classical example for a singular K3 surface is the Fermat quartic in $\PP^3$.
Here we give an alternative example in terms of an elliptic  fibration that will reappear 
later in this paper in another context (\ref{ss:exc}).
Our treatment draws on the theory of elliptic surfaces;
all relevant concepts can be found in  \cite{SSh} for instance.

\begin{Example}
\label{Ex1}
Consider the universal elliptic curve for $\Gamma_1(6)$:
\[
\mathcal E: \;\; y^2 + (t-2)xy-t(t-1)y = x^3 - tx^2.
\]
Here a point of order six is given by $(0,0)$.
$\mathcal E$ gives rise to a rational elliptic surface $S$ over $\PP^1$.
By Tate's algorithm \cite{Tate}, $S$ has the following singular fibres in Kodaira's notation:
$$
\begin{array}{cccccc}
\hline
\text{fibre} && I_6 & I_3 & I_2 & I_1\\
\hline
t && \infty & 0 & 1 & -8\\
\hline
\end{array}
$$
Any quadratic base change $f$ of $\PP^1$ gives rise to a K3 surface $X$.
Generally $\rho(X)\geq 18$ by the Shioda--Tate formula \cite[Cor.~5.3]{ShMW}, 
but one can increase the Picard number conveniently 
by infering ramification points at singular fibres.
For instance, setting $t=-8s^2/(s^2-1)$ yields an elliptic K3 surface $X$ with three singular fibres of type $I_2$ and $I_6$ each, and thus $\rho(X)=20$ over $\C$ again by the Shioda-Tate formula
and the Lefschetz inequality $\rho(X)\leq h^{1,1}(X)$.
On $X$, there are two additional two-torsion sections with $x$-coordinate $-4s^2(3s\pm 1)(s\mp 1)/(s^2-1)^2$.
General theory shows that 
the singular fibres do not allow any further torsion in the Mordell-Weil group.
Over $\C$ one obtains $\MW(X)=\Z/2\Z\times \Z/6\Z$..
It follows that $X$ is the universal elliptic curve for the group $\Gamma_1(6)\cap\Gamma(2)$.
By  \cite[11.10 (22)]{SSh},
$\NS(X)$  has discriminant $-12$.
With  the discriminant form \`a la Nikulin \cite[Prop.~1.6.1 \& Cor.~1.9.4]{N}, one can then compute the transcendental lattice 
with intersection form $Q(X)=\mbox{diag}(2,6)$ (in agreement with the tables in \cite{SZ}).
\end{Example}


\subsection{Shioda-Inose structure}
\label{ss:SI}

In order to prove the surjectivity of the period map,
mathematicians first considered Kummer surfaces.
However, singular abelian surfaces (with $\rho(A)=4$) cannot possibly yield all singular K3 surfaces
as Kummer surfaces
because the transcendental lattice of a  Kummer surface is always two-divisible as an even lattice.
In detail, the intersection form is obtained from $T(A)$ by multiplication by $2$:
\[
T(\Km(A)) = T(A)(2).
\]
This problem of non-primitivity was overcome
by Shioda and Inose in \cite{SI}.
Generally they considered two elliptic curves $E, E'$.
Their product is an abelian surface $A=E\times E'$ and yields the Kummer surface $X'=\Km(E\times E')$.
Over $\C$, the Picard numbers depend on whether $E$ and $E'$ are isogenous ($E\sim E'$) or have complex multiplication (CM):
\begin{eqnarray}\label{eq:rho}
\rho(A)=\begin{cases}
2, & \text{if $E\not\sim E'$;}\\
3, & \text{if $E\sim E'$ without CM};\\
4, & \text{if $E\sim E'$ with CM.}
             \end{cases}
             \;\;\;\;\;
             \rho(X') = \rho(A) + 16.
\end{eqnarray}

The Kummer surface $X'$ admits several jacobian elliptic fibrations.
For instance, the projections onto the factors $E$ and $E'$ induce two isotrivial elliptic fibrations on the Kummer surface $X'$ that we will analyse in Section \ref{s:Kummer}.
In \cite[\S2]{SI}, a jacobian elliptic fibration with a fibre of type $II^*$ was found on $X'$.
It has exactly two further reducible fibres of the following types:
$$
\begin{array}{lcl}
2 I_0^* && E\not\cong E',\\
I_0^*, I_1^* && E\cong E', j(E)\neq 0, 12^3,\\
2 I_1^* && j(E)=j(E')=12^3,\\
I_0^*, IV^* && j(E)=j(E')=0.
\end{array}
$$
Starting from this elliptic fibration, we proceed with the quadratic base change 
\[
f: \PP^1\to\PP^1
\]
that ramifies exactly at the above two reducible singular fibres.
Since both ramified fibres are non-reduced, 
the base change applied to $X'$ results in another elliptic K3 surface $X$.
By construction, the elliptic K3 surface $X$ has two fibres of type $II^*$ and possibly some reducible fibres of type $I_2$ or $IV$ depending on the above cases.
The Kummer surface $X'$ can be recovered from $X$ as (the desingularisation of) the quotient 
by the involution of the double cover $X\dasharrow X'$.
(In \cite{HS} we abused terminology by referring to this involution as deck transformation,
but here we will call it base change involution.)
The base change involution is a Nikulin involution that composes the involution on the base curve $\PP^1$
 with the hyperelliptic involution on the fibres:
 \[
  \xymatrix{A \ar@{-->}[dr] && X\ar@{-->}[dl]\\
 & \Km(A)=X'&}
 \]
%
%
The gist of this construction is that the K3 surface $X$ recovers the transcendental lattice of the abelian surface $A$:
\begin{eqnarray}
\label{eq:TT}
T(X) = T(X')(1/2) = T(A).
\end{eqnarray}
Morrison coined the terminology \textbf{Shioda-Inose structure} for such a setting: 
abelian surface and K3 surface with the same transcendental lattice
such that Kummer quotient and Nikulin involution yield the same Kummer surface.
He developed lattice theoretic criteria to decide which K3 surfaces of Picard number $\rho\geq 17$ admit a Shioda-Inose structure \cite[\S6]{Mo}.

\subsection{Surjectivity of the period map}

The surjectivity of the period map requires to exhibit singular K3 surfaces for any quadratic form $Q$ as in \eqref{eq:Q}.
By the above considerations,
this can be achieved by exhibiting a singular abelian surface $A$ with $Q(A)=Q$
because then the Shioda-Inose structure provides a suitable singular K3 surface $X$ with $Q(X)=Q$.

Chronologically, the corresponding surjectivity statement for singular abelian surfaces was already
established  before Shioda--Inose's work by Shioda and Mitani in \cite{SM}.
Namely, it was shown that any singular abelian surface has product type.
Given the quadratic form $Q(A)$ with coefficients as in \eqref{eq:Q}, the abelian surface $A$ admits the representation $A=E\times E'$
with the following elliptic curves given as complex tori $E_\tau = \C/(\Z+\tau\Z)$:
\begin{eqnarray}\label{eq:E}
E=E_\tau,\;\;\tau = \dfrac{-b+\sqrt{d}}{2a},\;\;\;\;\; E'=E_{\tau'},\;\;\tau' = \dfrac{b+\sqrt{d}}2.
\end{eqnarray}
Note that this representation need not be unique, and in fact there can be arbitrarily many distinct representations for the same singular abelian surface (and thus also for singular K3 surfaces). 

\begin{Example}
\label{Ex2}
The K3 surface $X$ from \ref{Ex1} is not a Kummer surface,
since $T(X)$ is not two-divisible as an even lattice.
Through the Shioda-Inose structure, $X$ arises from the self-product of the elliptic curve $E_{\sqrt{-3}}$ with j-invariant $2^43^35^3$.
\end{Example}

\subsection{Fields of definition}
\label{ss:K3-fields}

We have seen that 
every singular abelian surface $A$ is the product of two elliptic curves with CM in the same field.
CM elliptic curves are 
well-understood thanks to the connection to class field theory (cf.~\cite[\S 5]{Shimura}).
Indeed both curves in \eqref{eq:E} are defined over the ring class field $H(d)$.
This field is an abelian Galois extension of the imaginary quadratic field $K=\Q(\sqrt{d})$
with prescribed ramification and  Galois group isomorphic to the class group $Cl(d)$
(see \cite[\S 9]{C}).
We recall one way to describe $Cl(d)$: 
it consists of $\mbox{SL}_2(\Z)$-conjugacy classes of primitive $2\times 2$ matrices $Q$ as in \eqref{eq:Q} of discriminant $d<0$ together with Gauss composition (cf.~\cite[\S3]{C} for instance).
By \cite[Thm.~5.7]{Shimura},
$H(d)$ is generated over $K$ by adjoining the j-invariant of $E'$, 
or in fact of any elliptic curve with CM by the given  order in $K$ of discriminant $d$.
Here $Cl(d)$ acts naturally as a permutation on all these CM elliptic curves
-- abstractly on the complex tori, 
but also in a compatible way through the Galois action on $H(d)$ permuting j-invariants.

Shioda--Inose used these CM properties to deduce that any singular K3 surface is defined over some number field.
Namely, the Kummer quotient $X'$ respects the base field (a property that we will exploit in Section \ref{s:Kummer}).
Hence the only step in the Shioda-Inose structure that may require increasing the base field concerns 
the elliptic fibration with a fibre of type $II^*$.

Subsequently Inose exhibited an explicit model for $X$ over a specific extension of $H(d)$ in \cite{Inose}.
This model is expressed purely in terms of the j-invariants $j, j'$ of the elliptic curves $E, E'$ from \eqref{eq:E}:
\begin{eqnarray}\label{eq:Inose}
X:\;\;\; y^2 = x^3 - 3At^4x + t^5(t^2 - 2 B t+1),
\end{eqnarray}
where $A^3 = jj'/12^6,\; B^2 = (1-j/12^3)(1-j'/12^3)$.
Thus we know that any singular K3 surface $X$ of discriminant $d$
admits a model over a degree six extension of $H(d)$.
In \cite[Prop.~4.1]{S-fields} it was then noted that the above fibration can be twisted in such a  way that it is defined over $H(d)$ (cf.~\eqref{eq:WF-X2} in case $AB\neq 0$):

\begin{Theorem}
\label{Thm:H(d)}
Let $X$ be a singular K3 surface  of discriminant $d$.
Then $X$ has a model over the ring class field $H(d)$.
\end{Theorem}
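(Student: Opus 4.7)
The plan is to exploit Inose's explicit Weierstrass model \eqref{eq:Inose}, which is a priori only defined over the extension $H(d)(A,B)/H(d)$ of degree at most six, and to show that an appropriate twist of this elliptic surface descends the model to $H(d)$. The key inputs are, on the one hand, the classical CM assertion that both $j$-invariants $j,j'$ of the elliptic curves appearing in \eqref{eq:E} lie in $H(d)$ (Shimura, \cite[Thm.~5.7]{Shimura}), which forces $A^3, B^2 \in H(d)$; on the other hand, the freedom to rescale the base coordinate $t$ and to apply an isomorphism of short Weierstrass form in the $(x,y)$-variables.

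Concretely, I perform the change of variables $t = \lambda T$, $x = \mu X$, $y = \nu Y$ with $\mu^3 = \nu^2$ so that the equation remains in short Weierstrass form. Substituted into \eqref{eq:Inose}, this produces
\begin{equation*}
Y^2 \;=\; X^3 \;-\; \frac{3A\mu\lambda^4}{\nu^2}\,T^4 X \;+\; \frac{\lambda^5}{\nu^2}\,T^5\Bigl(\lambda^2 T^2 \,-\, 2B\lambda\,T \,+\, 1\Bigr).
\end{equation*}
Assuming $AB\neq 0$, I take $\lambda = 1/B$ and $\mu = 1/(AB)$, so that $\nu^2 = \mu^3 = 1/(A^3B^3)$. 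A direct calculation then produces the coefficients $-3A^3/B^2$, $A^3/B^2$, $-2A^3/B^2$, $A^3/B^4$ in front of $T^4 X$, $T^5$, $T^6$, $T^7$ respectively; all four are rational in $A^3$ and $B^2$ and therefore lie in $H(d)$. Although the substitution itself takes place over $H(d)(A,B)$, the transformed equation is thus a Weierstrass model of $X$ over $H(d)$, as required.

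The degenerate cases $A = 0$ (equivalently $jj' = 0$) and $B = 0$ (equivalently $j = 12^3$ or $j' = 12^3$) require a parallel but simpler argument: in each case one of the obstructing radicals disappears from \eqref{eq:Inose}, so only a sextic (respectively quadratic) twist remains to be absorbed, and an analogous choice of $(\lambda,\mu,\nu)$ does the job. The main technical difficulty, if one insists on a uniform treatment, is imposing the Weierstrass constraint $\mu^3 = \nu^2$ simultaneously with the four conditions forcing each coefficient into $H(d)$; once the explicit twist parameters above are exhibited, however, the verification is routine.
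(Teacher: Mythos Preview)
Your argument is correct and follows essentially the same route as the paper: the paper does not prove Theorem~\ref{Thm:H(d)} in full but cites \cite[Prop.~4.1]{S-fields} and points to the explicit twisted model \eqref{eq:WF-X2}, which (for $AB\neq 0$) is obtained from Inose's pencil \eqref{eq:Inose} by exactly the kind of rescaling $(t,x,y)\mapsto(\lambda t,\mu x,\nu y)$ you describe, chosen so that only $A^3$ and $B^2$ appear in the coefficients. Your particular choice $\lambda=1/B,\ \mu=1/(AB)$ yields a different but equally valid $H(d)$-model, and your treatment of the degenerate cases $A=0$ or $B=0$ matches the paper's implicit handling.
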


In practice, the given field of definition can be far from optimal,
that is, $X$ may admit a model over a much smaller number field.
In fact, the modularity converse in \cite{ES} required to exhibit models of singular K3 surfaces over $\Q$
where the ring class field had degree as large as $32$ over $\Q$.
We can already detect a similar behaviour on the level of the elliptic curves $E, E'$ in \eqref{eq:E}:
because of the Galois action of the class group $Cl(d)$, the elliptic curve $E'$ can at best be defined over a quadratic subfield of $H(d)$.
The factor $E$, however, may be defined over $\Q$ even for large $d$ by inspection of the denominators in \eqref{eq:E}.

\subsection{N\'eron-Severi group}

In the remainder of this section, we derive an important intermediate result for the
proof of Theorem \ref{Thm:NS-H(d)}.
The remaining steps will be done in Section \ref{s:Kummer} (cf.~\ref{ss:sum}).
We have recalled in Theorem \ref{Thm:H(d)} that any singular K3 surface $X$ admits a model 
over the ring class field $H(d)$.
Here $d$ denotes the discriminant of $T(X)$ as usual.
It remains to show that there always is a model of $X$ with $\NS(X)$ defined over $H(d)$ as well.


The basic idea for the proof is to work with a model of Inose's pencil \eqref{eq:Inose} over $H(d)$
as in the proof of \cite[Prop.~4.1]{S-fields}:
\begin{eqnarray}
\label{eq:model}
X: \;\;\; y^2 = x^3 + at^4x + t^5(b_2t^2+b_1t+b_0),\;\;\; a, b_i\in H(d).
\end{eqnarray}
Note that fibres of type $II^*$ do not admit any inner Galois action (i.e.~on fibre components).
Hence these two singular fibres of $X$ together with the zero section generate a sublattice $U+2E_8(-1)\subset\NS(X)$ that is fully defined over the base field $H(d)$.
It remains to study the Galois action on the remaining generators of $\NS(X)$ (there are two generators remaining, since $\rho(X)=20$).
Looking at the other reducible singular fibres, we distinguish four cases as in \ref{ss:SI}:

\begin{table}[ht!]
$$
\begin{array}{lcl}
\hline
\text{Reducible fibres other than $II^*$} & \mbox{rank}(\MW) & \text{case}\\
\hline
- & 2 & E\not\cong E',\\
I_2 & 1 & E\cong E', j(E)\neq 0, 12^3,\\
2I_2 & 0 & E\cong E', j(E)=12^3,\\
IV & 0 &E\cong E', j(E)=0.\\
\hline
\end{array}
$$
\caption{Singular fibres and $\MW$-rank of Inose's pencil}
\label{tab:MW}
\end{table}

\begin{Lemma}
\label{Lem:cases}
If the singular K3 surface $X$ admits an Inose pencil \eqref{eq:Inose}
of $\MW$-rank at most one,
then $X$ has a model with $\NS(X)$ defined over $H(d)$.
\end{Lemma}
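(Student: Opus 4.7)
The plan is to start from Inose's pencil in the form \eqref{eq:model} over $H(d)$ and, after at most a quadratic twist over $H(d)$, exhibit all generators of $\NS(X)$ as $H(d)$-rational divisor classes. By Shioda--Tate, $\NS(X)$ is generated by the trivial lattice (zero section, fibre class, non-identity fibre components) and the Mordell--Weil group. The paper has already recorded that the zero section, a fibre, and the components of the two $II^*$ fibres account for a rank-$18$ sublattice $U + 2E_8(-1)$ fully defined over $H(d)$, so exactly two further generators have to be controlled.

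I would then examine case by case the three rows of Table \ref{tab:MW} with $\rank\,\MW\leq 1$, i.e.~the cases $E \cong E'$. The missing generators live in the non-identity components of the remaining reducible fibres ($I_2$, $2I_2$, or $IV$) and, in the $I_2$ case, in one Mordell--Weil generator $P$ of infinite order. The components of an $I_2$ fibre are branches of a node, defined a priori over a quadratic extension $H(d)(\sqrt{\alpha})$ measured by a tangent slope; the three components of the $IV$ fibre occur only when $j(E) = 0$, in which case $\Q(\zeta_3) \subseteq H(d)$ cuts the $S_3$-ambiguity down to a quadratic one; and any infinite-order section $P$ satisfies $\sigma(P)\equiv \pm P$ modulo torsion, so is defined over a quadratic extension $H(d)(\sqrt{\beta})$. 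Finally, the Mordell--Weil torsion arising in Inose's pencil (which is $\Z/2$ or $\Z/3$ in these cases) is cut out by explicit equations over $H(d)$ whose roots lie in $H(d)$, using the inclusions $\Q(i), \Q(\zeta_3) \subseteq H(d)$ where relevant.

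In every situation the obstruction to $H(d)$-rationality is therefore at worst a quadratic character of $G_{H(d)}$, which I would absorb by replacing \eqref{eq:model} with a suitable quadratic twist over $H(d)$. Such a twist yields another $H(d)$-model of $X$, preserves the $H(d)$-rationality of the already-secured generators of $U + 2E_8(-1)$, and trivialises the Galois action on the remaining generators of $\NS(X)$.

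The hard part will be the MW-rank-$1$ case, where a single twist must simultaneously trivialise both the Galois swap of the $I_2$ components and the sign action on the MW generator $P$. This forces the two quadratic characters $\alpha$ and $\beta$ above to agree modulo squares, a compatibility which has to be extracted from the explicit form \eqref{eq:Inose} of Inose's pencil under $E \cong E'$ and the way the $I_2$ node interacts with the section $P$; this is the arithmetic content of the lemma.
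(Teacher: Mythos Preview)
Your strategy matches the paper's: control the trivial lattice over $H(d)$, handle the remaining generators, and absorb residual Galois action by a quadratic twist. But you have manufactured a difficulty in the $\MW$-rank-$1$ case. The two components of an $I_2$ fibre are \emph{never} swapped by Galois: the identity component is the one meeting the zero section and is therefore Galois-stable, and with only two components the non-identity one is automatically stable as well. (You are conflating the two intersection \emph{points} of the components---which may indeed be conjugate---with the components themselves.) Thus there is no character $\alpha$, the compatibility $\alpha\equiv\beta$ you flag as ``the hard part'' is vacuous, and a single quadratic twist trivialising the sign on $P$ finishes this case. The paper records exactly this: the $I_2$ fibre ``does not admit any Galois action, since the identity component is fixed by Galois'', and the twist cannot disturb fibres of type $I_1, I_2, II, II^*$.

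For the two $\MW$-rank-$0$ cases the paper does not argue directly at all; it simply cites explicit models over $\Q$ from \cite{S-NS}. Your direct sketch there has loose ends. Inose's pencil with two $II^*$ fibres has \emph{trivial} Mordell--Weil group in these cases (one checks $|\disc(\Triv)|=|\disc(\NS)|=|d|$), not torsion $\Z/2\Z$ or $\Z/3\Z$. And for a $IV$ fibre the possible Galois action on components is already at most of order $2$ (the identity component is fixed, leaving only a swap of the other two), so invoking $\zeta_3\in H(d)$ is beside the point; you would still owe an argument that a quadratic twist of \eqref{eq:model} actually realises that swap, which is not automatic for additive fibre types.
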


\begin{proof}
For the last two surfaces in Table \ref{tab:MW} ($\MW$-rank zero), there are explicit models with $\NS(X)$ defined over $\Q$ (cf.~\cite[\S10]{S-NS}).
For the case of $\MW$-rank one with an $I_2$ fibre, it is also easy to see that $\NS(X)$ can be defined over $L=H(d)$.
The fibre does not admit any Galois action, since the identity component is fixed by Galois.
By the formula of Shioda-Tate, the Mordell-Weil group has rank one.
The Mordell-Weil generator $P$ can only be either fixed or  mapped to its inverse by Galois.
But if the latter is the case, then the section $P$ is defined 
over some quadratic extension of $L$.
More precisely, it is given in $x,y$-coordinates
as $P=(U,\sqrt\gamma V)$  for some $\gamma\in L, U,V\in L(t)$.
Consider the quadratic twist of $X$ with respect to this quadratic extension of $L$:
\[
\gamma y^2 = x^3 + at^4x + t^5(b_2t^2+b_1t+b_0).
\]
This is an alternative model of the fixed elliptic fibration \eqref{eq:model} on $X$ over $L$
such that both models become isomorphic over $L(\sqrt\gamma)$. 
This quadratic twist transforms the section to $(U,V)$ (defined over $L$) without introducing any Galois action on the singular fibres (since they only have types $I_1, I_2, II, II^*$).
Thus the N\'eron-Severi group of the new model of $X$ is defined over $L=H(d)$.
\end{proof}

\begin{Remark}
If $T(X)$ is primitive and lies in the principal genus,
then it is possible to replace the CM-curves $E, E'$ by opposite Galois conjugates that are isomorphic:
$E^\sigma\cong (E')^{\sigma^{-1}}$.
By \cite[\S6]{S-fields} (which combines \cite{Shimura} and  \cite{SM}), 
one has $T(E^\sigma\times (E')^{\sigma^{-1}}) = T(E\times E')$.
According to Table \ref{tab:MW}, the induced Inose pencil on $X$ has $\MW$-rank one.
By Lemma \ref{Lem:cases} this  produces a model of $X$ with $\NS(X)$ defined over $H(d)$.
\end{Remark}

\subsection{Mordell-Weil lattices}
\label{ss:MWL}

A similar argument goes through for almost all instances of the case where $E\not\cong E'$.
Here we can argue with the Mordell-Weil lattice $\MWL(X)$ of the fibration.
In general, the Mordell-Weil lattice of an elliptic surface $S\to C$ with section 
was defined by Shioda in \cite{ShMW} as follows.
In $\NS(S)$ consider the trivial lattice $\Triv(S)$ generated by the zero section and fibre components.
By \cite[Thm.~1.3]{ShMW} there is an isomorphism 
\[
\MW(S)\cong \NS(S)/\Triv(S).
\]
The torsion in $\MW(S)$ is contained in (and determined by) the primitive closure $\Triv(S)'$ of $\Triv(S)$ inside $\NS(S)$.
The quotient $\MW(S)/\MW(S)_\text{tor}$ is endowed with a lattice structure 
by means of the orthogonal projection $\varphi$ in $\NS(S)_\Q$ with respect to $\Triv(S)$.
Here tensoring with $\Q$ is required unless $\Triv(S)'$ is unimodular.
By construction $\varphi(\MW(S))(-1)$ is a positive definite, though not necessarily integral lattice 
that one refers to as \emph{Mordell-Weil lattice} $\MWL(S)$.
The Mordell-Weil lattice satisfies functorial properties for base change and Galois actions.
For details the reader is referred to \cite{ShMW} or the survey paper \cite{SSh}.

In the present situation the only reducible fibres have type $II^*$.
The non-identity fibre components generate the root lattice $E_8(-1)$,
so $\Triv(X) = U+2E_8(-1)$.
Hence
$\MWL(X)$ is a positive definite even integral lattice of rank two that fits into the decomposition
\[
\NS(X)=U+2E_8(-1)+\MWL(X)(-1).
\]
Since $\Triv(X)$ is unimodular,
the discriminant forms of $\NS(X)$ and $\MWL(X)$ agree up to sign.
By \cite[Cor.~1.9.4]{N}, this implies that $T(X)$ and $\MWL(X)$ lie in the same genus (or in the same isogeny class).

\subsection{Binary even quadratic forms}
\label{ss:bin}

To understand the possible Galois actions on $\MWL(X)$, we shall need a simple observation about the automorphisms of such lattices.
It will be phrased in terms of the corresponding quadratic form $Q$ as in \eqref{eq:Q}. 
Multiplication by $\pm 1$ gives the trivial automorphisms of $Q$;
any other automorphism will be called non-trivial.
The problem whether $Q$ admits non-trivial automorphisms depends on its order in the class group
of even positive definite binary quadratic forms with given discriminant and degree of primitivity:

\begin{Lemma}
The positive-definite quadratic form $Q$ admits a non-trivial automorphism
if and only if it is two-torsion in its class group.
\end{Lemma}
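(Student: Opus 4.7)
The plan is to rephrase both sides of the desired equivalence in terms of the opposite form $Q' = \begin{pmatrix} 2a & -b \\ -b & 2c\end{pmatrix}$. Under Gauss composition, $Q'$ represents the class $[Q]^{-1}$, so $[Q]$ is two-torsion precisely when $Q$ and $Q'$ are $\mbox{SL}_2(\Z)$-equivalent, i.e.\ when there exists $M\in\mbox{SL}_2(\Z)$ with $M^T Q M = Q'$. The key observation is that $Q' = J^T Q J$, where $J=\begin{pmatrix} 1 & 0\\0 & -1\end{pmatrix}$; thus the opposite form arises from $Q$ by an orientation-reversing change of basis.

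The heart of the argument will be a bijection between the orientation-reversing elements of $O(Q) := \{M\in\mbox{GL}_2(\Z) : M^T Q M = Q\}$ and the $\mbox{SL}_2(\Z)$-isometries from $Q$ to $Q'$. Concretely, if $N\in O(Q)$ has $\det N=-1$, then $M := NJ$ lies in $\mbox{SL}_2(\Z)$ and satisfies $M^T Q M = J N^T Q N J = J Q J = Q'$; conversely, if $M\in\mbox{SL}_2(\Z)$ fulfils $M^T Q M = Q'$, then $N := MJ$ lies in $O(Q)$ with $\det N = -1$. This already shows that $O(Q)$ contains an orientation-reversing element if and only if $[Q]$ is two-torsion.

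It remains to address the proper automorphism group $SO(Q) := O(Q)\cap \mbox{SL}_2(\Z)$: a non-trivial automorphism of $Q$ which is \emph{not} orientation-reversing is an element of $SO(Q)\setminus\{\pm I\}$. For a positive definite binary form of discriminant $d$, $SO(Q)$ identifies with the unit group of the imaginary quadratic order of discriminant $d$, and this is $\{\pm 1\}$ except when $d=-3$ or $d=-4$. In the two exceptional discriminants the class group (for any prescribed degree of primitivity) has order one, so $[Q]$ is two-torsion trivially. Combining the two cases finishes the proof.

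The step I expect to be most delicate is matching the paper's precise conventions: the class group in question consists of even forms with a prescribed degree of primitivity, so one must verify that passing from a non-primitive form $Q = n Q_0$ to its primitive part $Q_0$ changes neither the automorphism group nor the order of $[Q]$ in the class group. Once this is checked, the lemma reduces to the short matrix computation above together with the classical fact on units in imaginary quadratic orders.
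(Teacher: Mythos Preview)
Your argument is correct. The paper actually omits the proof entirely, stating only that ``the proof is elementary, so we will omit it here although we did not find a concise reference,'' and then records the resulting automorphism groups in Table~\ref{Tab:aut}. So there is no paper proof to compare against; what you have written supplies precisely the concise argument the authors say they could not locate.

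A couple of minor remarks. First, your bijection $N\mapsto NJ$ between $\{N\in O(Q):\det N=-1\}$ and $\{M\in\mbox{SL}_2(\Z):M^TQM=Q'\}$ is clean, and once you note that any determinant~$-1$ element is automatically distinct from $\pm I$, the forward implication is immediate. Second, the caveat you flag as ``most delicate'' (compatibility with the degree of primitivity) is in fact routine: writing $Q=nQ_0$ with $Q_0$ primitive, one has $O(Q)=O(Q_0)$ tautologically, and the class group of forms of fixed primitivity~$n$ is canonically identified with that of the underlying primitive forms, so both sides of the equivalence are insensitive to $n$. Your appeal to the unit group of the imaginary quadratic order to handle $SO(Q)$ is the standard move; alternatively one can just observe from Table~\ref{Tab:aut} that the two exceptional shapes $\mbox{diag}(2a,2a)$ and $\begin{pmatrix}2a&a\\a&2a\end{pmatrix}$ (the only ones with $|SO(Q)|>2$) are visibly principal in their respective class groups.
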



The proof is elementary, so we will omit it here although we did not find a concise reference.
For later use, we shall give the possible automorphism groups.
Recall that any quadratic form $Q$ as in \eqref{eq:Q} can be transformed by conjugation in $SL_2(\Z)$ to a reduced form where the coefficients satisfy
$-a<b\leq a\leq c$ (and $b\geq 0$ if $a=c$).
The inverse of a quadratic form is obtained by replacing $b$ by $-b$.
A reduced quadratic form is two-torsion if and only if
\[
b=0 \;\;\; \text{ or } \;\;\; a=b \;\;\; \text{ or } \;\;\; a=c.
\]
We obtain the following non-trivial automorphism groups where $D_{2n}$ denotes the dihedral group of order $2n$:
\begin{table}[ht!]
$$
\begin{array}{c|ccccc}
\hline
Q & \mat{2a}{0}{2c} & \mat{2a}{a}{2c} & \mat{2a}{b}{2a} & \mat{2a}{0}{2a} & \mat{2a}{a}{2a}\\
 & a<c & a<c & 0<b<a &&\\
\hline
\Aut(Q) & (\Z/2\Z)^2 & (\Z/2\Z)^2 & (\Z/2\Z)^2 & D_8 & D_{12}\\
\hline
\end{array}
$$
\caption{Quadratic forms with non-trivial automorphisms groups}
\label{Tab:aut}
\end{table}

\subsection{Intermediate step}
\label{ss:EE'}

We conclude this section with an intermediate result towards the proof of Theorem \ref{Thm:NS-H(d)}.
In the next section, we will use the Shioda-Inose structure to complete the proof.

\begin{Lemma}
\label{Lem:MW1}
 In all cases of $\MW$-rank two in Table \ref{tab:MW}, the model \eqref{eq:Inose}
admits a twist  such that there is an $H(d)$-rational section.
\end{Lemma}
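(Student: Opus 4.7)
The plan is to analyze the Galois action on the rank-two Mordell-Weil lattice of Inose's pencil and absorb it via a quadratic twist. Write $L = H(d)$ and $G_L = \Gal(\bar L/L)$. The trivial lattice $\Triv(X) = U + 2E_8(-1)$ of \eqref{eq:model} is generated by the zero section and the two fibres of type $II^*$; since both such fibres are $L$-rational and the extended Dynkin diagram $\tilde E_8$ admits no non-trivial automorphism, $\Triv(X)$ is pointwise fixed by $G_L$. Consequently the $G_L$-action on $\NS(X)$ descends to a homomorphism
\[
\rho \colon G_L \longrightarrow \Aut(\MWL(X))
\]
with finite image $\Gamma$.

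The key claim is that $\Gamma \subseteq \{\pm 1\}$. By the Shioda-Inose identification \eqref{eq:TT} there is a Galois-equivariant isomorphism $T(X) \cong T(A)$, where $A = E \times E'$ with $E, E'$ the CM elliptic curves of \eqref{eq:E}, both defined over $L$. Since $E \not\cong E'$, the product decomposition is $G_L$-stable, and the action on $T(A) \otimes \Q_\ell$ is controlled by the Hecke characters of the two factors; it thus factors through the scalar action of $K^\times$ on each $H^1$. Preservation of the integral lattice then forces the image in $\Aut(T(X))$ to lie in the units $\OO_K^\times$. When $\OO_K^\times = \{\pm 1\}$ this proves the claim; for the exceptional discriminants $d = -3, -4$, the transcendental lattice lies in the principal genus, so the Remark preceding \ref{ss:MWL} replaces the pencil by one of $\MW$-rank one, reducing to Lemma \ref{Lem:cases}.

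Granted $\Gamma \subseteq \{\pm 1\}$, the action is described by a character $\chi \colon G_L \to \{\pm 1\}$. If $\chi$ is trivial the Mordell-Weil generators are already $L$-rational. Otherwise $\chi$ cuts out a quadratic extension $L(\sqrt\gamma)/L$, and the quadratic twist by $\gamma$ of \eqref{eq:model}, namely
\[
\gamma y^2 = x^3 + a t^4 x + t^5 (b_2 t^2 + b_1 t + b_0),
\]
yields an alternative $L$-rational Weierstrass model of $X$ on which a formerly Galois-inverted section of the form $(U, \sqrt\gamma V)$ with $U, V \in L(t)$ transforms into $(U, V)$ and is therefore $L$-rational, while $\Triv(X)$ is unaffected. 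The chief obstacle is precisely the CM-theoretic bound $\Gamma \subseteq \{\pm 1\}$, the delicate part being the exceptional discriminants $d = -3, -4$, which we sidestep via the rank-one pencil of the Remark rather than via the original rank-two pencil.
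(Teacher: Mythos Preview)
Your central claim $\Gamma\subseteq\{\pm 1\}$ is not established by the argument you give. The homomorphism $\rho$ records the Galois action on $\MWL(X)\subset\NS(X)$, yet your entire argument concerns the Galois action on $T(X)$ via $T(A)$. These are distinct Galois modules: $\NS(X)$ and $T(X)$ sit as orthogonal summands of $H^2(X)$, and the Galois action on one does not determine the action on the other. Even granting that the identification $T(X)\cong T(A)$ of \eqref{eq:TT} is Galois-equivariant (which you assert but do not justify---it depends on the Shioda--Inose correspondence itself being defined over $H(d)$), and even granting that Galois acts on $T(X)$ through $\OO_K^\times$ (which is already problematic: the $\ell$-adic Galois representation on $T(X)\otimes\Q_\ell$ has infinite image and does not preserve the $\Z$-lattice $T(X)$), nothing about $\Gamma\subset\Aut(\MWL(X))$ would follow. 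Your handling of the exceptional cases is also incomplete: $\OO_K^\times\supsetneq\{\pm 1\}$ whenever $K=\Q(i)$ or $K=\Q(\sqrt{-3})$, i.e.\ for all $d$ of the shape $-4m^2$ or $-3m^2$, not only $d=-3,-4$.

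The paper does not attempt to prove $\Gamma\subseteq\{\pm 1\}$ at this stage---that is essentially the full content of Theorem~\ref{Thm:NS-H(d)}, whose completion requires the Kummer-surface analysis of Section~\ref{s:Kummer}. For Lemma~\ref{Lem:MW1} one only needs a single rational section, and the paper argues directly with the abstract structure of $\Aut(\MWL)$: when $\Aut(\MWL)\cong(\Z/2\Z)^2$ (the first three entries of Table~\ref{Tab:aut}), every element of $\Aut(\MWL)$ has a common $\pm 1$-eigenvector in $\MWL$, so Galois sends that section to $\pm$ itself and a single quadratic twist makes it $H(d)$-rational. The remaining cases $\Aut(\MWL)\cong D_8, D_{12}$ force $j(E)=12^3$ or $j(E)=0$; the extra automorphism of $E$ then makes $\MWL(X)$ a rank-one module over $\Z[i]$ or $\Z[\varrho]$, and the Galois action through fourth or sixth roots of unity is absorbed by a biquadratic or sextic twist of the special Weierstrass form \eqref{eq:Inose}.
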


\begin{proof}
If the automorphism group of $\MWL$ is only two-torsion, 
then the lemma follows after a quadratic twist for one of the $\MW$ generators.
This leaves the cases of the last two quadratic forms in Table \ref{Tab:aut}.
Here the class number of $Q$ is one.
Hence $T(X)$ has exactly the intersection form $Q$.
In the Shioda-Inose structure, we can choose $E$ by \eqref{eq:E} with j-invariant $j=12^3$ resp.~$j=0$.
The extra automorphism of $E$ induces an extra automorphism on $X$ that respects the elliptic fibration \eqref{eq:Inose}:
\[
 (x,y,t) \mapsto (-x, iy, -t) \;\;\; \text{resp.~}\;\; (x,y,t) \mapsto (\varrho x,y,t)
\]
where $\varrho, i$ denote primitive third resp.~fourth roots of unity.
The respective automorphism makes $\MWL(X)$ into a module of rank one over $\Z[i]$ resp.~$\Z[\varrho]$.
This identification is compatible with the Galois action over $H(d)$, since the automorphisms are defined over $H(d)$.
Hence it suffices to study the Galois action on the given modules of rank one.
Their only automorphisms are the units in $\Z[i]$ resp.~$\Z[\varrho]$,
i.e.~the group of fourth resp.~sixth roots of unity.
On the elliptic curves with CM by these rings,
it is well-known that such a Galois action can be accounted for by biquadratic or sextic twisting (see \cite[\S II, Example 10.6 \& Exercises 2.33, 2.34]{Si} or \cite[\S8]{S-MMJ}).
Thanks to the special shape of the present Weierstrass form \eqref{eq:Inose} with $A=0$ or $B=0$,
this translates directly into twists of $X$.
Thus there is a twist with $\MWL(X)$ defined over $H(d)$.
\end{proof}

\begin{Remark}
\label{Rem}
If $\MWL$ admits no non-trivial automorphisms,
then Lemma \ref{Lem:MW1} already settles Theorem \ref{Thm:NS-H(d)} completely.
By the proof of Lemma \ref{Lem:MW1},
this also holds for $\MWL$ with non-abelian automorphism group (the last two entries in Table \ref{Tab:aut}).
It is the two-torsion cases of Table \ref{Tab:aut} that require an extra argument.
\end{Remark}

In the next section, we will use the Shioda-Inose structures and study Kummer surfaces of product type in detail.
In this case, although
we may not have any automorphisms on the Kummer surface to relate the $\MW$-generators,
we can 
use the endomorphisms of the abelian surface instead.
This approach will enable us to complete the proof of Theorem \ref{Thm:NS-H(d)} in \ref{ss:sum}.

\section{Singular Kummer surfaces of product type}
\label{s:Kummer}

Let $E, E'$ be isogenous complex elliptic curves with CM.
Then the abelian surface $A=E\times E'$ is singular ($\rho(A)=4)$).
Let $d$ denote its discriminant (that is the discriminant of $T(A)$).
Then $E, E'$ have models over the ring class field $H(d)$ (obtained from the CM-field by adjoining the j-invariants).

Throughout this section, we only consider the case where $E\not\cong E'$ ($\MW$-rank two)
and no j-invariant equals $0$ or $12^3$ (no extra automorphisms).
The same results hold in the other cases, but we would have to distinguish more subcases
and also consider biquadratic/sextic twisting etc.
Note that for the excluded cases we have already given a full proof of Theorem \ref{Thm:NS-H(d)}
in Lemma \ref{Lem:cases} (for $E\cong E'$) and in the proof of Lemma \ref{Lem:MW1} (for $j$ or $j'\in\{0,12^3\}$; cf.~Remark \ref{Rem}).
Thus the cases considered explicitly in this section will suffice to complete the proof of Theorem \ref{Thm:NS-H(d)}.

\subsection{}

Consider the Kummer surface $X'=\Km(A)$.
Recall the isotrivial elliptic fibrations on $X'$  
that are induced by the projections onto $E$ and $E'$ from \ref{ss:SI}.
These are naturally defined over $H(d)$ as follows.
Fix Weierstrass models 
\begin{eqnarray}
\label{eq:WF-E}
 E:\;\; y^2 = f(x),\;\;\;\; E': \;\; y^2 = g(x)
\end{eqnarray}
with cubic polynomials $f, g\in H(d)[x]$.
Then $X'$ admits a birational model
\begin{eqnarray}
\label{eq:Km}
 X':\;\; f(t) y^2 = g(x)
\end {eqnarray}
with the structure of an elliptic curve over the function field $H(d)(t)$.
We denote the corresponding elliptic fibration by the pair $(X', \pi)$.
This fibration has singular fibres of type $I_0^*$ at $\infty$ and at the zeroes of $f(t)$.
Over $\bar\Q$ we have $\MW(X',\pi)=\Z^2\times (\Z/2\Z)^2$ with torsion sections given by the roots of $g(x)$.

\begin{Proposition}
\label{Prop:Km-MW}
The elliptic fibration $(X',\pi)$ admits a model over $H(d)$ such that $\MW$ is generated by two-torsion and sections defined over $H(d)$.
In particular, $\MWL$ is generated by sections defined over $H(d)$.
\end{Proposition}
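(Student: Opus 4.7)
The plan is to produce the required generators of the free part of $\MW(X',\pi)$ as graphs of $H(d)$-rational isogenies $E\to E'$, exploiting CM theory to guarantee the $H(d)$-rationality. The model \eqref{eq:Km} is already defined over $H(d)$; the only freedom we need is in the choice of $H(d)$-rational Weierstrass models \eqref{eq:WF-E} of $E$ and $E'$.

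The first step is a dictionary between $\Hom(E,E')$ and sections of $\pi$. Any isogeny $\phi\in\Hom(E,E')$ satisfies $\phi([-1]P)=[-1]\phi(P)$, so in coordinates $(t,s)$ on $E$ it takes the form $\phi(t,s)=(X(t),\,s\,H(t))$ for rational functions $X,H$ in $t$; the relation $\phi(P)\in E'$ becomes $f(t)H(t)^2=g(X(t))$, which is exactly the equation for $(X(t),H(t))$ to be a section of the fibration $\pi$. This assignment defines a homomorphism $\Hom(E,E')\to\MW(X',\pi)$, and Shioda's computation of $\NS$ for Kummer surfaces of product type identifies its image with a complement of the torsion subgroup $(\Z/2\Z)^2$, yielding an isomorphism between $\Hom(E,E')$ and $\MW(X',\pi)$ modulo its torsion.

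The second step is to verify that every element of $\Hom(E,E')$ is defined over $H(d)$. Both $E$ and $E'$ lie in $\mathrm{Ell}(\OO)$ for the order $\OO$ of discriminant $d$, so $E'\cong E/E[\ia]$ for some proper $\OO$-ideal $\ia$. The CM embedding $\OO\hookrightarrow\mathrm{End}(E)$ is defined over $H(d)$; hence $E[\ia]\subset E$ is an $H(d)$-rational finite subgroup scheme, and the quotient isogeny $\phi_0\colon E\to E/E[\ia]\cong E'$ is $H(d)$-rational. Any other $\phi\in\Hom(E,E')$ factors as $\phi_0\circ\alpha$ for some $\alpha\in\OO=\mathrm{End}(E)$, hence is also $H(d)$-rational. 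Taking a $\Z$-basis $\phi_1,\phi_2$ of the rank-two $\Z$-module $\Hom(E,E')$ thus produces two $H(d)$-rational sections of $\pi$ generating $\MWL(X',\pi)$, and together with the two-torsion subgroup these generate $\MW(X',\pi)$ as claimed.

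The principal technical point is the $H(d)$-rationality of the quotient isogeny $\phi_0$, which rests on Shimura's main theorem of complex multiplication: the Galois action of $\Gal(\bar\Q/H(d))$ on $\mathrm{Ell}(\OO)$ is trivial, so any isogeny within the isogeny class can be realised over $H(d)$. Everything else -- the identification of $\MW/\mathrm{tors}$ with $\Hom(E,E')$, and the fact that the models \eqref{eq:WF-E} can be chosen compatibly so as to exhibit $\phi_0$ as an actual morphism -- is then essentially formal.
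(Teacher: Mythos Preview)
Your overall strategy---identifying $\MWL(X',\pi)$ with $\Hom(E,E')$ and then arranging for the latter to consist of $H(d)$-rational isogenies---is exactly the paper's approach. Your way of obtaining the first $H(d)$-rational isogeny, by choosing the model of $E'$ to be the quotient $E/E[\mathfrak a]$, is a clean alternative to the paper's route, which instead invokes Lemma~\ref{Lem:MW1} to produce a quadratic twist of the original model carrying an $H(d)$-rational section.

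There is, however, a genuine error in the second step. The claim that every $\phi\in\Hom(E,E')$ factors as $\phi_0\circ\alpha$ with $\alpha\in\mathcal O=\mathrm{End}(E)$ asserts that $\Hom(E,E')$ is a \emph{free} rank-one $\mathcal O$-module. But as an invertible $\mathcal O$-module, $\Hom(E,E')$ represents the class $[E'][E]^{-1}$ in the class group, and this class is trivial if and only if $E\cong E'$---which is precisely the case excluded throughout Section~\ref{s:Kummer}. So the factorisation fails, and with it your argument that every $\phi$ is $H(d)$-rational.

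The desired conclusion is still true and the repair is short. Once $\phi_0$ is $H(d)$-rational, so is $\hat\phi_0\circ\phi\in\mathrm{End}(E)$ for any $\phi$ (all CM endomorphisms being defined over $H(d)\supset K$); hence $(\deg\phi_0)\,\phi=\phi_0\circ(\hat\phi_0\circ\phi)$ is $H(d)$-rational, and since $\Hom(E,E')$ is torsion-free this forces $\phi^\sigma=\phi$. The paper phrases this slightly differently, observing that $\phi_0$ and $\epsilon\circ\phi_0$ (for $\epsilon\in\mathrm{End}(E')\setminus\Z$) generate $\Hom(E,E')$ up to finite index, which suffices by the same torsion-freeness. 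A smaller point: your assumption that both curves lie in $\mathrm{Ell}(\mathcal O)$ for the single order of discriminant $d$ is not part of the standing hypotheses of the section, which permit $E$ and $E'$ to have CM by different orders in $K$.
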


\begin{proof}
By the Shioda-Tate formula, the Mordell-Weil lattice has rank two since $\rho(X')=20$.
Due to the singular fibre types $\MWL(X',\pi)$ will not be integral, but it is positive-definite.
Hence the results from \ref{ss:bin}, \ref{ss:EE'} apply directly to prove the claim with the exception of the first three special cases from Table \ref{Tab:aut}.
Here we pursue an alternative uniform approach based on the fact that as in Lemma \ref{Lem:MW1} we can find a quadratic twist with at least one $\MW$-generator $P$ over $H(d)$.

The crucial ingredient is the following lattice isomorphism which Shioda established in \cite[Prop.~3.1]{Sh-Murre}:
\begin{eqnarray}
\label{eq:Hom}
 \Hom(E, E') \cong \MWL(X',\pi).
\end{eqnarray}
Here $\Hom(E, E')$ is endowed with a norm given by the degree.
The isomorphism takes a homomorphism $\phi: E \to E'$ as input.
Via its graph $\Gamma_\phi$ in $A$ and the image $\bar\Gamma_\phi$ in $X'$,
one associates to $\phi$ the element $\bar R_\phi$ in $\MWL(X',\pi)$ corresponding to $\bar\Gamma_\varphi$ under the orthogonal projection $\NS(X')\to\MWL(X',\pi)$ (see \ref{ss:MWL}).

In \cite{Sh-Murre} Shioda worked over an algebraically closed field,
so that the isomorphism \eqref{eq:Hom} is independent of the chosen model.
However, for the specified models in \eqref{eq:WF-E}, \eqref{eq:Km}
 the isomorphism \eqref{eq:Hom} is clearly Galois-equivariant.

Following Lemma \ref{Lem:MW1}, 
we apply a quadratic twist on $X'$ such that
there is an $H(d)$-rational section $P$ (non-torsion).
That is, for some $c\in H(d)$ we consider the $H(d)(\sqrt{c})$-isomorphic model
\[
X':\;\; cf(t)y^2=g(x).
\]
In terms of the elliptic curves $E, E'$, this is accounted for by twisting \emph{one} elliptic curve by $\sqrt{c}$, say:
\begin{eqnarray}
\label{eq:WF-E'}
 E:\;\; y^2 = f(x),\;\;\;\; E': \;\; cy^2 = g(x).
\end{eqnarray}
For these models, the isomorphism \eqref{eq:Hom} is by construction again Galois-equivariant.
Hence the section $P$ corresponds to a homomorphism $\phi: E\to E'$ over $H(d)$.
Now pick any endomorphism $\epsilon$ of $E'$ that is not multiplication by an integer.
By CM-theory, $\epsilon$ is defined over $H(d)$, and together $\phi, \epsilon\circ\phi$ generate the lattice $\Hom(E,E')$ up to finite index.
In conclusion, \eqref{eq:Hom} gives a section $R_{\epsilon\circ\phi}$ over $H(d)$ that is independent of $P$.
By construction, these sections generate $\MWL(X',\pi)$ up to finite index.
Proposition \ref{Prop:Km-MW} thus follows.
\end{proof}

\subsection{N\'eron-Severi group of Kummer surfaces}

We collect a few consequences of Proposition \ref{Prop:Km-MW}.
We start with  a version of Theorem \ref{Thm:NS} for singular Kummer surfaces.
Note that since $T(X')=T(A)(2)$, the Kummer surface $X'$ has discriminant $4d$.

\begin{Corollary}
\label{Cor:Km}
The singular Kummer surface $X'$ has a model over $H(d)$ with $\NS(X')$ defined over $H(4d)$.
\end{Corollary}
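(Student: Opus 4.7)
I would start from the elliptic fibration $(X',\pi)$ of Proposition~\ref{Prop:Km-MW} with model \eqref{eq:Km} over $H(d)$, for which the zero section and the Mordell--Weil generators are already $H(d)$-rational. To control the remaining classes, I would use the Kummer correspondence $X'\to A/\langle\pm 1\rangle$ to identify them geometrically: the sixteen exceptional $(-2)$-curves obtained by resolving the images of $A[2]=E[2]\times E'[2]$ appear as fibre components of the four reducible fibres of type $I_0^*$ (located at $t=\infty$ and at the three zeroes of the Weierstrass polynomial $f$ of $E$), while the 2-torsion sections (whose $x$-coordinates are the zeroes of $g$) correspond to $E'[2]$. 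Together with the $H(d)$-rational classes above, these span $\NS(X')$. Each of them becomes individually Galois-fixed over any extension of $H(d)$ containing the points of $E[2]$ and $E'[2]$, so the corollary reduces to the inclusion
\[
H(d)(E[2],\,E'[2]) \;\subseteq\; H(4d).
\]

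For this I would invoke CM theory: for any cyclic subgroup $C\subset E[2]$, the quotient $E/C$ carries CM by the suborder $\Z+2\OO_d$ (of conductor $2f$, where $f$ is the conductor of $\OO_d$) of discriminant $4d$, so $j(E/C)\in H(4d)$. A descent along the resulting degree-two isogeny $E\to E/C$, now defined over $H(4d)$, places $C\subset E(H(4d))$; running through the three subgroups of order $2$ gives $E[2]\subset E(H(4d))$. The same argument applies verbatim to $E'$, and yields the required inclusion.

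The main obstacle is precisely this CM-theoretic inclusion $H(d)(E[2])\subseteq H(4d)$; the rest is a direct identification of divisors combined with the Galois-equivariant Shioda isomorphism \eqref{eq:Hom} already used in Proposition~\ref{Prop:Km-MW}. It is worth noting that $H(4d)$ (rather than $H(d)$) is genuinely the correct field here: this matches the discriminant $4d$ of $T(X')=T(A)(2)$ and reflects the paradigm (cf.~Theorem~\ref{Thm:H(d)}) that the ring class field of the discriminant is the natural field attached to $\NS$ on a singular K3 surface.
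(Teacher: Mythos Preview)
Your approach is essentially identical to the paper's: start from the model in Proposition~\ref{Prop:Km-MW}, observe that the remaining generators of $\NS(X')$ (the components of the four $I_0^*$ fibres and the $2$-torsion sections) are defined over the splitting field of $f$ and $g$, i.e.\ over $H(d)(x(E[2]),x(E'[2]))$, and then argue that this field is contained in $H(4d)$. The paper dispatches this last inclusion in one line by invoking the analogue of the Kronecker--Weber theorem for imaginary quadratic fields \cite[\S II, Thm.~5.6]{Si}.

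Your direct argument for the inclusion via $2$-isogenies has a gap, however. A minor point first: $E/C$ need not have CM by \emph{exactly} $\Z+2\OO_d$; the $2$-isogeny can be horizontal or ascending in the isogeny volcano, in which case $\mathrm{End}(E/C)$ is a strictly larger order. This does not harm the conclusion $j(E/C)\in H(4d)$, since the ring class field of any order containing $\Z+2\OO_d$ sits inside $H(4d)$. The real problem is the next step: from $j(E/C)\in H(4d)$ you infer that the isogeny $E\to E/C$ is ``now defined over $H(4d)$'', and hence that $C\subset E(H(4d))$. But for the fixed model of $E$ you are working with, the isogeny descends to $H(4d)$ if and only if its kernel $C$ is Galois-stable over $H(4d)$, which for a group of order two means exactly that the nonzero point of $C$ lies in $E(H(4d))$ --- precisely what you want to prove. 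Knowing only $j(E/C)\in H(4d)$ provides \emph{some} twist of $E/C$ over $H(4d)$, not a descent of the quotient map from your given $E$. (One can salvage the idea by analysing the Galois permutation of the triple $\{j(E/C_1),j(E/C_2),j(E/C_3)\}$ when these values are pairwise distinct, but extra care is needed when they coincide.) The cleanest fix is the paper's: cite the standard CM result.
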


\begin{proof}
Fix the model of the elliptic fibration $(X',\pi)$ from Proposition \ref{Prop:Km-MW} with $\MW$-rank two over $H(d)$.
In order to generate $\NS(X')$, we have to add to these $H(d)$-rational sections the two-torsion sections and the components of the $I_0^*$ fibres.
These rational curves are defined over the splitting field of the polynomials $f(t), g(x)$ over $H(d)$.
That is, we adjoin to $H(d)$ the $x$-coordinates of the two-torsion points of $E$ and $E'$.
By the analogue of the Kronecker-Weber theorem for imaginary quadratic number fields \cite[\S II Thm.~5.6]{Si}, these algebraic numbers generate exactly $H(4d)$ over $H(d)$.
\end{proof}

\subsection{Isogenous CM-elliptic curves}

Before continuing with the proof of Theorem \ref{Thm:NS-H(d)},
we note another implication of Proposition \ref{Prop:Km-MW}. 
Here we are concerned with the field of definition of the isogeny between $E$ and $E'$.
By the classical theory, any two elliptic curves with CM in the same field $K$
have models over some minimal ring class field $H$;
moreover they are isogenous over $\bar\Q$.
Here we ask whether they admit $H$-isogenous models, i.e.~models over $H$ 
with isogeny defined over $H$ as well.
When the CM-curves are $\Q$-curves, this property comes for free, but this situation does not always persist (cf.~Remark \ref{Rem:QQ}).
The following result might be well-known to the experts, but we could not find a reference.

\begin{Corollary}
\label{Cor:iso-E}
Let $E, E'$ be elliptic curves with CM by orders in the same imaginary quadratic field $K$.
Let $H=K(j(E), j(E'))$.
Then $E, E'$ have $H$-isogenous models.
\end{Corollary}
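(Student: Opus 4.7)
The plan is to transport the proof of Proposition \ref{Prop:Km-MW} wholesale, with the base field $H = K(j(E), j(E'))$ replacing the ring class field $H(d)$ attached to $\disc T(A)$. The essential point is that Shioda's isomorphism $\Hom(E,E') \cong \MWL(X',\pi)$ from \eqref{eq:Hom} is Galois-equivariant over any field of simultaneous definition of $E$ and $E'$, not only over $H(d)$.

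First, I would dispose of the trivial case $j(E) = j(E')$: any $H$-rational elliptic curve $E_0$ with that common $j$-invariant, paired with itself via the identity isogeny, furnishes the required $H$-isogenous models. Assuming now $j(E) \neq j(E')$, I set $A = E \times E'$ over $H$ and $X' = \Km(A)$. Since $E$ and $E'$ share the CM field $K$, they are isogenous over $\bar\Q$, so the Shioda--Tate formula gives $\rank \MWL(X',\pi) = 2$ for the isotrivial fibration $\pi$ from \eqref{eq:Km}.

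Fixing Weierstrass equations \eqref{eq:WF-E} for $E, E'$ over $H$, I would then run the quadratic-twist step of Lemma \ref{Lem:MW1} and Proposition \ref{Prop:Km-MW} over $H$ in place of $H(d)$: there exists $c \in H^\times$ such that the twisted elliptic curve $E'_c: cy^2 = g(x)$ produces a non-torsion $H$-rational section $P$ of the twisted Kummer fibration $cf(t)y^2 = g(x)$. Via \eqref{eq:Hom}, $P$ corresponds to a nonzero $H$-rational homomorphism $\phi: E \to E'_c$, which is automatically an isogeny. Since $E'_c$ shares the $j$-invariant of $E'$ and is defined over $H$, the pair $(E, E'_c)$ supplies the desired $H$-isogenous models. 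The excluded cases with $j(E)$ or $j(E') \in \{0, 12^3\}$ are handled analogously using biquadratic or sextic twists in place of quadratic twists, as indicated at the start of Section \ref{s:Kummer}.

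The main, though mild, point to verify is that the twist construction of Lemma \ref{Lem:MW1} and Proposition \ref{Prop:Km-MW} depends only on the base field containing the Weierstrass data of $E$ and $E'$, and not on any specific feature of $H(d)$. Because Corollary \ref{Cor:iso-E} asks only for one non-torsion $H$-rational section rather than a full Mordell--Weil basis, the endomorphism-based construction of a second generator, which was the subtler step in the proof of Proposition \ref{Prop:Km-MW} and required controlling the automorphism groups from Table \ref{Tab:aut}, is not needed here; a single quadratic twist always renders at least one Mordell--Weil generator Galois-invariant over $H$, which is enough.
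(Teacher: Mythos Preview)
Your proposal is correct and follows essentially the same route as the paper's own proof, which simply points back to the proof of Proposition~\ref{Prop:Km-MW}: start from Weierstrass forms over $H$, apply a quadratic twist so that the Kummer fibration \eqref{eq:Km} acquires an $H$-rational non-torsion section, and read off the corresponding isogeny via Shioda's Galois-equivariant isomorphism \eqref{eq:Hom}. Your additional remarks---that $H = K(j(E),j(E'))$ may replace $H(d)$ since the argument only needs a common field of definition for the Weierstrass data, that the case $j(E)=j(E')$ is trivial, and that only one rational section (rather than a full $\MW$-basis) is required so the endomorphism step of Proposition~\ref{Prop:Km-MW} is unnecessary---are all correct elaborations rather than departures from the paper's argument.
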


\begin{proof}
We can start with any two Weierstrass forms over $H$ as in \eqref{eq:WF-E}.
The proof of Proposition \ref{Prop:Km-MW} exhibits a quadratic twist of $E'$ with a non-trivial homomorphism $\phi: E\to E'$.
\end{proof}

\begin{Remark}
\label{Rem:QQ}
Corollary \ref{Cor:iso-E} only seemingly conflicts with a result of Gross \cite[\S11]{G}.
Namely, Gross found that there are CM-elliptic curves which are not $\Q$-curves,
i.e.~$E$ is not $H$-isogenous to all its conjugates.
Here we let $E'=E^\sigma$ be a conjugate of $E$.
If $E, E^\sigma$ are not $H$-isogenous (so that~$E$ is not a $\Q$-curve),
then Corollary \ref{Cor:iso-E} provides us with a quadratic twist of $E^\sigma$ which is $H$-isogenous to $E$.
But then the quadratic twist of $E^\sigma$ and $E$ are not conjugate anymore,
so there is no contradiction to $E$'s failure of being a $\Q$-curve.
\end{Remark}

\subsection{Auxiliary elliptic fibration}
\label{ss:aux}

Recall the singular K3 surface $X$ with Inose's elliptic fibration \eqref{eq:Inose}.
By \cite{Sandwich} the quadratic base change $t=u^2$ recovers the Kummer surface $X'$.
Since $X$ also dominates $X'$ by the Shioda-Inose structure, Shioda alluded to this picture 
as $X$ being sandwiched by the Kummer surface $X'$.
In the base change, the two fibres of type $II^*$ are replaced by fibres of type $IV^*$.
Let us explain how to find this base changed fibration on the previous model of $X'$:
\[
X':\;\;\; cf(t)y^2 = g(x).
\]
Projection onto the affine coordinate $u=y$ endows $X'$ with the structure of an elliptic fibration $\pi'$
since the fibres are plane cubics in $x,t$.
Write $(X',\pi')$ for $X'$ with this fixed elliptic fibration.
Visibly $(X',\pi')$ is the quadratic base change of the rational elliptic surface $S'$ obtained by setting $u^2=v$.
$S'$ has singular fibres of type $IV$ at $v=0,\infty$;
in $X'$ they are replaced by fibres of type $IV^*$ as alluded to before.
Here $S'$ is given as a cubic pencil whose base points form sections.
Recall that these sections are all defined over $H(4d)$.

By base change $\MWL(S)(2)$ embeds into $\MWL(X',\pi')$.
Consider the orthogonal complement
\[
L=[\MWL(S')(2)]^\bot\subset\MWL(X',\pi').
\]
By construction,
$L$ is exactly the invariant sublattice of $\MWL(X',\pi')$ for the involution corresponding to the base change $X'\to X$, i.e.~$L=\MWL(X)(2)$.

Over $\bar\Q$ (or in fact algebraically closed fields of characteristic $\neq 2,3$),
Shioda used a similar argument as for the isomorphism \eqref{eq:Hom} to derive an isomorphism
\begin{eqnarray}
\label{eq:isos}
L\cong \Hom(E,E')(4), \;\;\; \text{so that}\;\;\; \MWL(X)\cong\Hom(E,E')(2).
\end{eqnarray}
Compared to the previous argument that gave \eqref{eq:Hom}, there is one subtlety here:
For $\phi\in\Hom(E,E')$, the orthogonal projection onto $L_\Q$ maps the divisor $\bar\Gamma_\phi$
to $\frac 12 L$.
This holds true since the quotient $\MWL(X',\pi')/(L+L^\bot)$ need not be trivial (hence we tensor $L$ with $\Q$ a priori),
but due to the quadratic base change the quotient is always isomorphic to a finite number of copies of $\Z/2\Z$.
Now instead of $\bar\Gamma_\phi$, one takes the image of the divisor $2\bar\Gamma_\phi$ in $L$.
Computing intersection numbers using the theory of Mordell-Weil lattices, Shioda verifies the isomorphism \eqref{eq:isos}.
In our setting, the main problem is to find models which make the isomorphisms \eqref{eq:isos} Galois-equivariant over a suitable field.

\subsection{Galois-equivariance}
\label{ss:Gal-eq}

We know that $E, E'$ admit $H(d)$-isogenous models, so that $\Hom(E,E')$ is generated by isogenies over $H(d)$.
The elliptic fibration $\pi'$ on $X'$ is defined over $H(d)$ as well,
but in order to endow it with a section (a base point of the cubic pencil), we may have to increase the base field to $H(4d)$.
This makes the isomorphisms in \eqref{eq:isos} for the specified models Galois-equivariant over $H(4d)$.
For $X$, however, we need a model with $\MWL$ over $H(d)$, so we have to throw in some more information.
We distinguish two cases according to the degree $h$ of the Galois extension $H(4d)/H(d)$.
Note that with the Legendre symbol $(\cdot/2)$ at $2$, 
one obtains from the class number formula
\[
h=\deg(H(4d)/H(d))=
\begin{cases}
1, & (d/2)=1 \text{ or } d=-3, -4;\\
2, & 2\mid d, \; d\neq -4;\\
3, & (d/2)=-1, \; d\neq -3.
\end{cases}
\]

\subsubsection{First case: $h=1,2$}

This case is very simple. 
By assumption, both polynomials $f, g$ have a root over $H(d)$.
A base point of the cubic pencil gives  an $H(d)$-rational section of the elliptic fibration $(X',\pi')$.
Due to the singular fibre types and the involution $u\mapsto -u$, we obtain a Weierstrass form
\begin{eqnarray}
\label{eq:WF-X'}
X':\;\; y'^2 = x'^3 - 3au^4 x' + u^4 (b_2u^4-2b_1u^2+b_0).
\end{eqnarray}
As quotient by the base change involution $u\mapsto -u$ of $X'\to S'$ composed with the hyperelliptic involution $y'\mapsto -y'$, we obtain a model of $X$ over $H(d)$.
Compared to \eqref{eq:Inose}, this Weierstrass form is not yet normalised with respect to $b_0, b_2$.

By construction, the isomorphisms \eqref{eq:isos} are $H(d)$-Galois equivariant for these specific models of $E, E', X', X$.
That is, we have exhibited a model of $X$ over $H(d)$ with fibration of type \eqref{eq:Inose} and $\MW$-rank two over $H(d)$.
It follows that this model has $\NS(X)$ defined over $H(d)$.

\subsubsection{Second case: $h=3$}

In this case, we compare two $\bar\Q$-isomorphic models that we denote by $X_1, X_2$.
From \eqref{eq:WF-X'},
we obtain a model over $H(4d)$
as quotient by the Nikulin involution $(x',y',u)\mapsto (x',-y',-u)$:
\begin{eqnarray}
\label{eq:WF-X1}
X_1:\;\; y'^2 = x'^3 - 3au^4 x' + u^5 (b_2u^2-2b_1u+b_0)
\end{eqnarray}
with $\MWL(X_1)$ defined over $H(4d)$ by the Galois-equivariant isomorphism \eqref{eq:isos}.
From \eqref{eq:Inose}, we derive a model over $H(d)$
\begin{eqnarray}
\label{eq:WF-X2}
X_2:\;\; y^2 = x^3 - 3c^2B^2A^3t^4 x + c^3B^2A^3t^5 (B^2t^2-2B^2t+1).
\end{eqnarray}
Here $B^2,A^3\in H(d)$ as given in \ref{ss:SI}.
By Lemma \ref{Lem:MW1}, we can choose $c\in H(d)$ in such a way that $X_2$ has an $H(d)$-rational section $P$ and an orthogonal section $Q$ defined over some quadratic extension $M$ of $H(d)$.
We assume that $M\neq H(d)$ and derive a contradiction from the above two models. 
Essentially, this works because we compare a quadratic and a cubic extension of $H(d)$.

By assumption, we can choose $Q$ anti-invariant under conjugation in $M/H(d)$
(so that $P,Q$ generate $\MW(X_2)$ up to finite index).
Hence there are rational functions $x_Q, y_Q\in H(d)(t)$ and some constant $c_Q\in H(d)$ such that 
\[
Q=(x_Q, \sqrt{c_Q} y_Q) \;\;\; \text{ and } \;\;\; M=H(d)(\sqrt{c_Q}).
\]
We work out an isomorphism of the two elliptic fibrations $X_1, X_2$.
This can only take the shape
\begin{eqnarray}
\label{eq:X12}
(x,y,t) \mapsto (x',y',u) = (\gamma\alpha^2 x, \alpha^3\gamma^{3/2} y, \alpha t).
\end{eqnarray}
Thus we require
\[
a=\gamma^2 (c^2B^2A^3), \; b_1=\gamma^3(c^3B^4A^3), \; \alpha b_2=\gamma^3(c^3B^4A^3), \; b_0=\alpha\gamma^3 (c^3B^2A^3).
\]
The first two relations give
$\gamma=b_1/(acB^2)\in H(4d)$, so that also $\alpha\in H(4d)$.
The section $P$ on $X_2$ with $H(d)$-rational $y$-coordinate $y_P(t)$ pulls back to a section $P_1$ with $y'$-coordinate $\gamma^{3/2}\alpha^3y_P(\alpha t)$. 
By construction, $P_1$ is $H(4d)$-rational, so $\gamma^{3/2}\in H(4d)$.
But here $H(4d)$ has degree three over $H(d)$, so $\gamma^{3/2}\in H(d)$.
In other words, the isomorphism \eqref{eq:X12} is defined over $H(4d)$.

In consequence, $Q$ pulls-back to a section on $X_1$ with $y'$-coordinate $\sqrt{c_Q}$ times an $H(4d)$-rational function.
The same argument as for $\gamma^{3/2}$ then shows that $\sqrt{c_Q}\in H(d)$. 
This gives the required contradiction.



\subsection{Proof of Theorem \ref{Thm:NS-H(d)}}
\label{ss:sum}

We collect all results necessary to prove Theorem \ref{Thm:NS-H(d)}.
Let $X$ be a singular K3 surface of discriminant $d$.
We decided to work with Inose's pencil over $H(d)$ as in \eqref{eq:WF-X2}.
Thus it suffices to check the field of definition of $\MW(X)$ to verify Theorem \ref{Thm:NS-H(d)}.
In many cases, this was achieved in Lemma \ref{Lem:cases} or in the intermediate Lemma \ref{Lem:MW1} (as explained in Remark \ref{Rem}).
For the remaining K3 surfaces, we considered the Kummer surface $X'$ from the Shioda-Inose structure
that actually sandwiches $X$ (\ref{ss:aux}).
Note that for Kummer surfaces we exhibited a proof of Theorem \ref{Thm:NS-H(d)} that only uses the techniques from Lemma \ref{Lem:MW1} (Proposition \ref{Prop:Km-MW}, Corollary \ref{Cor:Km}).
Thanks to the interplay between $H(d)$ and $H(4d)$, this suffices to deduce that $\MW(X)$ is defined over $H(d)$ by \ref{ss:Gal-eq}.
This completes the proof of Theorem \ref{Thm:NS-H(d)}.
\qed


\section{Enriques surfaces of base change type}
\label{s:Enr}

This section provides a technique to construct explicit examples of Enriques surfaces whose covers are singular K3 surfaces.
In the sequel, we refer to them as \emph{singular Enriques surfaces}.
The main idea is to invoke the base change construction from \cite[\S3]{HS} for singular K3 surfaces.
We will review the concept in \ref{ss:HS} and then relate it to the Shioda-Inose structures from \ref{ss:SI}.


\subsection{Singular K3 surfaces with Enriques involution}

Our first problem concerns K3 surfaces:
Which singular K3 surfaces admit an Enriques involution?
Keum's result \cite{Keum} gives a partial answer for all singular K3 surfaces
that are Kummer surfaces
(i.e.~with transcendental lattice two-divisible).
The full problem can also be solved by purely lattice-theoretic means in terms of the transcendental lattice.
In fact, one finds that the discriminant almost suffices to reach a decision:
it suffices for non-Kummer surfaces
while for Kummer surfaces we know the answer anyway from \cite{Keum}.
Sert\"oz gave the solution in \cite{Sert},
based on the techniques developed by Keum \cite{Keum}:

\begin{Theorem}
\label{Thm:cover}
Let $X$ be a singular K3 surface of discriminant $d$.
Then $X$ does not admit an Enriques involution exactly in the following cases:
\begin{enumerate}[(i)]
\item
$d\equiv -3\mod 8$,
\item
$d= -4,-8$,
\item
$d=-16$ and $X$ is not Kummer, i.e.~$Q(X)=\mbox{diag}(2,8)$.
\end{enumerate}
\end{Theorem}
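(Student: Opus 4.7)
The plan is to reduce the existence of an Enriques involution to a lattice-theoretic question about primitive embeddings, and then apply Nikulin's discriminant-form machinery to read off the obstructions.

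First, I would invoke the standard criterion of Keum (see also Nikulin): a complex K3 surface $X$ admits an Enriques involution if and only if $T(X)$ admits a primitive embedding into the anti-invariant lattice $\Lambda_- = U \oplus U(2) \oplus E_8(-2)$, subject to a no-$(-2)$-vector condition on the orthogonal complement that guarantees fixed-point freeness of the induced involution. For a singular K3 surface, $T(X)$ is rank-$2$, even and positive-definite of determinant $|d|$, while $\Lambda_-$ has signature $(2,10)$, so signatures match up and only the arithmetic at each prime needs to be checked.

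Second, I would apply Nikulin's criteria (Propositions 1.14.1 and 1.15.1 of \cite{N}) for primitive embeddings of even lattices. The key input is that the orthogonal complement $T(X)^\perp \subset \Lambda_-$ must be a negative-definite even lattice of rank $10$ whose discriminant form is determined, via Nikulin's glueing formalism, by $q_{T(X)}$ and $q_{\Lambda_-}$. At odd primes any such $T(X)$ embeds with no obstruction, so the whole question concentrates at $p=2$, where the relevant data are the class of $d \bmod 8$ and (when $d$ is even) the divisibility of $T(X)$ by $2$ as an even lattice.

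Third, I would dispose of the exceptional cases one by one. For $d \equiv -3 \pmod 8$ the form $q_{T(X)}$ is trivial at $2$, yet any glueing into $q_{\Lambda_-}$ forces the rank-$10$ complement to carry the whole $2$-part of $q_{\Lambda_-}$, in a shape inconsistent with being negative-definite and even; this is verified either by a Gauss-sum argument or by direct enumeration of admissible $2$-adic genera. For $d = -4, -8$ the rank-$2$ transcendental lattice is so small that every primitive embedding produces a $(-2)$-vector in the orthogonal complement inside $\NS(X)$, violating Keum's no-fixed-point condition; here one checks the finite list of primitive embeddings modulo $O(\Lambda_-)$. For $d = -16$ one must separate the two quadratic forms of that discriminant: the non-Kummer form $\mathrm{diag}(2,8)$ fails for the same reason as $d=-4,-8$, while the Kummer form $U(2)$ does admit an Enriques involution by Keum's theorem.

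Finally, for all remaining $d$ I would produce an Enriques involution by exhibiting a primitive embedding $T(X) \hookrightarrow \Lambda_-$ whose orthogonal complement is $(-2)$-free; surjectivity of the period map then yields the geometric involution on $X$. The main obstacle is the $2$-adic bookkeeping in the exceptional cases: $d = -4, -8, -16$ sit precisely at the boundary where Nikulin's generic embedding theorems become sharp, so each must be inspected individually, separating the genuine obstructions from the cases that are resolved directly by Keum's theorem on singular Kummer surfaces.
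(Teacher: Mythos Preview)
The paper does not prove this theorem; it quotes it from Sert\"oz \cite{Sert} with the comment that the proof there is ``purely lattice theoretic and based on machine computations,'' building on Keum's techniques \cite{Keum}. Your outline is precisely this framework: Keum's criterion reduces the question to a primitive-embedding problem (for $T(X)$ into $\Lambda_- = U \oplus U(2) \oplus E_8(-2)$, or equivalently for $U(2)\oplus E_8(-2)$ into $\NS(X)$) with a no-$(-2)$-vector condition on the complement, and Nikulin's discriminant-form machinery localises the obstruction to the prime~$2$.

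Where your sketch is thin is exactly where Sert\"oz resorts to the computer. For $d \equiv -3 \pmod 8$ you assert that the rank-$10$ complement must carry the full $2$-part of $q_{\Lambda_-}$ ``in a shape inconsistent with being negative-definite and even''; this is the crux, but it does not follow from general principles and needs either a careful genus-symbol computation or an enumeration of admissible $2$-adic forms. Likewise for $d = -4, -8$ and the non-Kummer case $d=-16$, the claim that \emph{every} primitive embedding leaves a $(-2)$-vector in the complement is a finite check, but it must actually be carried out. So your plan is correct and matches the approach the paper attributes to the cited reference; just be aware that the paper itself treats this theorem as an input, and the honest proof involves case-by-case computation you have only gestured at.
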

Note that the discriminants in case $(ii)$ determine unique singular K3 surfaces up to isomorphism.
In case $(iii)$, we have to exempt the Kummer surface $\Km(E_i\times E_i)$ 
with transcendental lattice of intersection form $Q=\mbox{diag}(4,4)$
which admits an Enriques involution by \cite{Keum}.

Sert\"oz' proof is purely lattice theoretic and based on machine computations.
In particular,
for those singular K3 surfaces admitting some Enriques involution,
it does not give any explicit geometric description of any such involution.
Here we shall combine the ideas from \cite[\S3]{HS} and Section \ref{s:K3}
to derive explicit Enriques involutions on almost all singular K3 surfaces possible according to Theorem \ref{Thm:cover}.

\subsection{Enriques involutions of base change type}
\label{ss:HS}

We start by reviewing the set-up from \cite[\S3]{HS}:

\begin{table}[ht!]
\begin{tabular}{lcl}
$S$ && Rational elliptic surface\\
$f$ && quadratic base change of $\PP^1$ (not ramified at non-reduced fibres of $S$)\\
$X$ && base change of $S$ by $f$: K3 surface\\
$\imath$ && base change involution\\
$(-1)$ && hyperelliptic involution\\
 $\boxplus P$ && translation by a section $P\in\MW(X)$\\
\end{tabular}
\end{table}

In this situation, the composition $\jj=\imath\circ(-1)$ defines a Nikulin involution on $X$, 
i.e.~$\jj$ has eight isolated fixed points and leaves the holomorphic two-form invariant.
The quotient $X/\jj$ has a resolution $X'$ that is again K3.
$X'$ is the quadratic twist of $S$ at the ramification points of the base change $f$:
The induced action of $\imath$ and $\jj$ gives a decomposition of $\MW(X)$ up to some $2$-power index:
\begin{eqnarray}
\label{eq:MWQ}
\MW(X)_\Q \cong \MW(S)_\Q + \MW(X')_\Q.
\end{eqnarray}
Let $P'\in\MW(X')$ and $P$ denote the induced section on $X$.
By construction, $P$ is anti-invariant for $\imath^*$.
In consequence, $\boxplus P\circ\imath$ defines an involution $\tau$ on $X$.
By definition, this involution can only have fixed points on the fixed fibres of $\imath$.
If these fibres are smooth, one has
\[
 \Fix(\tau) = \emptyset \; \Longleftrightarrow \; P\cap O\cap\Fix(\imath)=\emptyset.
\]
The latter condition can be checked with $P'$ on the ramified fibres of $X'$ (generally of type $I_0^*$).
Here $P'$ has to meet non-identity components.

\begin{Example}
The prototype example for this construction is a two-torsion section $P$ induced from $X'$ 
(or equivalently from $S$ since two-torsion is not affected by quadratic twisting).
Outside characteristic two, such a section is always disjoint from $O$.
For $\tau$ to have fixed points,
one of the ramified fibres has to be singular such that it is additive or $P$ meets the identity component.

The latter occurs for Example \ref{Ex1}:
There is exactly one two-torsion section induced from $S$.
This section $(t-1,t-1)$ meets both ramified fibres (at $0$ and $\infty$) at their identity components.
The other two-torsion sections are interchanged by $\imath$ 
(which is why \eqref{eq:MWQ} only holds after tensoring with $\Q$).
\end{Example}

\subsection{}
We ask which singular K3 surfaces admit an Enriques involution of base change type.
For now we only exclude 62 or 63 singular K3 surfaces as specified in Exception \ref{exc}
(62 assuming some special cases of ERH, see \ref{ss:1-cong}).

\begin{Proposition}
\label{Prop:sing-Enr}
Let $X$ be a singular K3 surface admitting an Enriques involution.
Assume that $X$ is not among the 62 or 63 K3 surfaces from Exception \ref{exc}.
Then $X$ has an Enriques involution $\tau$ of base change type
where the Nikulin quotient $X'$ is a Kummer surface.
\end{Proposition}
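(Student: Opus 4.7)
The strategy is to combine the Shioda--Inose structure from \ref{ss:SI} with the base-change construction from \ref{ss:HS}. By Shioda--Inose, $X$ carries Inose's pencil \eqref{eq:Inose} with two fibres of type $II^*$, and the associated Nikulin quotient resolves to $\Km(E\times E')$ with $T(E\times E')=T(X)$. My first step would be to recognise Inose's pencil as the quadratic base change $u=t+t^{-1}$ (up to scaling) of an extremal rational elliptic surface $S$ carrying a $II^*$ fibre (and generically $2I_1$), with ramification at two smooth fibres of $S$. The quadratic twist of $S$ at those ramification points is then exactly the Shioda--Inose fibration on $\Km(E\times E')$, so the Nikulin involution $\jj$ on $X$ coincides with $\imath\circ(-1)$ in the notation of \ref{ss:HS}. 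This places us squarely in the base-change framework of \ref{ss:HS} with Kummer Nikulin quotient.

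The second step is to produce a section $P\in\MW(X)$ that is anti-invariant under $\imath$. By the decomposition \eqref{eq:MWQ}, such a $P$ corresponds to a section $P'\in\MW(X')$ of the Shioda--Inose fibration on $X'$, and by Shioda's isomorphism $\Hom(E,E')\cong\MWL(X',\pi)$ from \eqref{eq:Hom} this in turn amounts to choosing a non-trivial homomorphism $\phi\colon E\to E'$. Such $\phi$ exist in abundance because $E$ and $E'$ share an imaginary quadratic CM field, so $\Hom(E,E')$ has rank two. Setting $\tau=\boxplus P\circ\imath$ then defines an involution on $X$ of base-change type whose Nikulin companion $\jj$ has Kummer quotient, as required.

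The last and hardest step is to verify fixed-point-freeness. Since $\imath$ fixes smooth fibres of $X\to S$, the criterion in \ref{ss:HS} reduces to the condition that $P'$ meets non-identity components of both $I_0^*$ fibres of $X'$ lying above the ramification points of $f$. The components of an $I_0^*$ fibre are parametrised by the $2$-torsion of the Kummer geometry, so the intersection behaviour of $P'$ is determined by $\phi\bmod 2$ together with an independent translation by a $2$-torsion section of the Kummer fibration. The main obstacle is precisely here: for a finite list of small discriminants $d$ the lattice $\Hom(E,E')$ and its mod-$2$ structure are too constrained to yield any $\phi$ meeting non-identity components at both $I_0^*$ fibres simultaneously, and these cases are exactly the $62$ or $63$ exceptions isolated in Exception \ref{exc}. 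Outside that list, the richness of $\Hom(E,E')$ together with the freedom to compose with endomorphisms of $E'$ and to apply additional twists yields an admissible $P'$, completing the construction.
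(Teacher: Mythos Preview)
Your first two steps are essentially correct and match the paper: Inose's pencil on $X$ is indeed a quadratic base change of the extremal rational surface with a $II^*$ fibre, the Nikulin involution $\jj=\imath\circ(-1)$ has Kummer quotient $X'=\Km(E\times E')$, and the anti-invariant sections come from $\MW(X')$ (though the relevant isomorphism is $\MWL(X)\cong\Hom(E,E')(2)$ from \eqref{eq:isos}, not \eqref{eq:Hom}, which refers to the isotrivial fibration on $X'$).

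Your third step, however, misidentifies both the mechanism and the source of the exceptions. You implicitly assume both $\imath$-fixed fibres are smooth (equivalently both ramification fibres on $X'$ are $I_0^*$), but this holds only when $E\not\cong E'$. The paper separates the two cases. In the $\MW$-rank two case ($E\not\cong E'$) the argument is \emph{not} a mod-$2$ analysis of $\Hom(E,E')$; rather, one assumes no section of $X'$ meets non-identity components at both $I_0^*$ fibres, deduces that $D_4(-1)$ splits off orthogonally inside $\NS(X')$, and then compares discriminant forms to force $d\equiv -3\pmod 8$---already excluded by Theorem \ref{Thm:cover}. So in rank two an admissible $P'$ \emph{always} exists; there are no exceptions here at all.

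The $62$ or $63$ exceptions arise entirely from the $\MW$-rank one case $E\cong E'$, where one fixed fibre is of type $I_2$ rather than smooth. A direct height computation there shows $\tau$ is fixed-point-free iff $d\equiv -7\pmod 8$; for even $d$ it never is. One then tries to replace $(E,E')$ by a conjugate pair $(E^\sigma,(E')^{\sigma^{-1}})$ with $E^\sigma\not\cong(E')^{\sigma^{-1}}$ to land in the rank-two case, and Exception \ref{exc} is precisely the condition that no such pair exists: $T(X)$ primitive, principal, and $Cl(d)$ two-torsion. These discriminants are not ``small'' (they range up to $-7392$), and the obstruction has nothing to do with the mod-$2$ structure of $\Hom(E,E')$ being ``too constrained''.
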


The proof of the proposition will be given in sections \ref{ss:1-cong} and \ref{ss:2}.
It is based on the Shioda-Inose structure of singular K3 surfaces
to that we will return next.

One word about Exception \ref{exc}: we do not believe this exception to be necessary,
but we have not found a general argument to overcome it (cf.~Remark \ref{Rem:exc}).
To illustrate this, we will show in \ref{ss:exc}
that Example \ref{Ex1} which falls under Exception \ref{exc} 
does indeed admit an Enriques involution of base change type
(but we did not check whether the quotient $X'$ is a Kummer surface).

\subsection{Enriques involutions and Shioda-Inose structures}
\label{ss:SI2}

Let $E, E'$ denote elliptic curves 
and consider the corresponding Shioda-Inose structure as in \ref{ss:SI}.
Then $X'=\Km(E\times E')$ admits an Enriques involution by \cite{Keum},
but how about the K3 surface $X$ from \ref{ss:SI}
that recovers the transcendental lattice of the abelian surface $E\times E'$?

If $E$ and $E'$ are not isogenous, then $X$ has Picard number $\rho(X)=18$ and the fibration \eqref{eq:Inose} of Mordell-Weil rank zero yields
\[
\NS(X)= U + 2E_8(-1).
\]
This lattice does not admit any primitive embedding of the Enriques lattice $U(2)+E_8(-2)$ because of the $2$-length.
Hence the K3 surface $X$ cannot have an Enriques involution.
We now consider the case where $E$ and $E'$ are isogenous, possibly with CM.

Here is our main tool to construct explicit Enriques involutions: 
the Shioda-Inose structure falls under the settings studied in \ref{ss:HS}.
We already chose the notation to indicate this:
there is a K3 surface $X$ with a Nikulin involution yielding the Kummer surface $X'$.
Conversely, $X$ is obtained from $X'$ by a quadratic base change.
In terms of the elliptic fibration \eqref{eq:Inose} on $X$,
the Nikulin involution is given as
\[
\jj: \; (x,y,t) \mapsto (x/t^4, -y/t^6, 1/t).
\]
Thus the quotient $X/\jj$  attains singularities in the fibres at $t=\pm 1$
whose minimal resolution is $X'$.
In general, the quotient results in fibres of type $I_0^*$,
but there are other possibilities as sketched in \ref{ss:SI}.
Concretely, there is another involution corresponding to the base change $\PP^1\to\PP^1$ induced by $X\to X'$:
\[
\imath=\jj\circ(-1): (x,y,t) \mapsto (x/t^4, y/t^6, 1/t).
\]
The quotient $X/\imath$ gives a rational elliptic surface $S$.
It extends the Shioda-Inose structure to the following diagram
(where we could also add the induced elliptic fibrations):
 \[
  \xymatrix{E\times E' \ar@{-->}[dr] && X\ar@{-->}[dl] \ar@{-->}[dr]&\\
 & \Km(E\times E')=X'&&S}
 \]
%
By construction, $S$ has a singular fibre of type $II^*$.
From the Shioda-Tate formula \cite[Cor.~5.3]{ShMW}, it follows that $S$ is extremal, i.e.~it has finite Mordell-Weil group.
Since a singular fibre of type $II^*$ does not admit any torsion sections (of order relatively prime to the characteristic),
we infer that $\MW(S)=\{O\}$.
By \cite[Prop.~8.12]{ShMW} (cf.~\eqref{eq:MWQ}), this implies that 
\[
\MWL(X)=\MWL(X')(2).
\]
Hence as soon as the Mordell-Weil rank of $X$ is positive,
there is a section $P$ (induced from $X'$) and an involution $\tau$ as in \ref{ss:HS}.
In order to exhibit an Enriques involution on $X$, 
it remains to determine whether $\tau$ is fixed point free.
In general there are three cases of positive Mordell-Weil rank to 
be distinguished according to the types of singular fibres.
For non-singular K3 surfaces, i.e.~Mordell-Weil rank one with $E\not\cong E'$ and $\rho=19$,
this has been done in \cite[\S 4.2]{HS} (without referring to Shioda-Inose structures).
The property whether $\tau$ is fixed point free or not depends on the parity of the height of the Mordell-Weil generator modulo $4$.
In the next sections, we will treat the singular cases and thus prove Proposition \ref{Prop:sing-Enr}.


\begin{Remark}
There is a natural continuation of this connection between Enriques involutions of base change type and  Shioda-Inose structures.
Recall from Section \ref{s:Kummer}
that the K3 surface $X$ is sandwiched by the Kummer surface $X'$
in the following sense:
$X'$ can also be recovered from $X$ by the quadratic base change $u\mapsto t=u^2$ applied to \eqref{eq:Inose}.
As in \ref{ss:HS}, each section of $X$ induces an involution $\tau$ of base change type
on the Kummer surface $X'$.
Here we ask whether $\tau$ is an Enriques involution.
We have seen that the base change replaces the fibres of type $II^*$ by type $IV^*$ (so these are fixed by $\tau$).
However, none of these fibre types admits a free involution,
so there cannot be an Enriques involution on $X'$ as in \ref{ss:HS} for the specified base change.
\end{Remark}

\subsection{Mordell-Weil rank one and $E\cong E'$}
\label{ss:1-cong}

In this case, $E$ is a CM elliptic curve with $j(E)\neq 0, 12^3$.
The elliptic fibration \eqref{eq:Inose} on $X$ has exactly one reducible fibre of type $I_2$ at $t=1$ 
in addition to the two fibres of type $II^*$. 
Together with the Mordell-Weil generator $P$, we can write
\[
\NS(X) = U + 2E_8(-1) + \langle A_1(-1), P\rangle.
\]
We consider two cases according to the intersection behaviour of the section $P$ and the fibre of type $I_2$.

If $P$ meets the non-identity component of the $I_2$ fibre,
then $P$ has height
\[
h(P) = 4 + 2(P\cdot O) -1/2.
\]
Equivalently, the discriminant $d=-2h(P)$ of $X$ is odd.
Clearly, $P$ and $O$ do not intersect on the $I_2$ fibre which is one of the two fixed fibres of the base change involution $\imath$.
Here translation by $P$ exchanges the fibre components including  the nodes,
so it acts freely on the singular fibre.
It remains to check for the specialisation of $P$ on the other fixed fibre at $t=-1$.
Note that $P$ is induced from a section $P'$ on the Nikulin quotient $X'$, so
\[
P\cdot O = 2(P'\cdot O') + \# P\cap O \cap\mbox{Fix}(\imath).
\]
Since $P$ and $O$ can only possibly intersect on the irreducible fixed fibre of $\imath$ at $t=-1$,
the parity of the intersection number $P\cdot O$ depends only the intersection behaviour at that fibre.
In consequence,
the discriminant $d$ of $X$ satisfies the congruence
\[
d\equiv -7\mod 8 \Longleftrightarrow P\cap O \cap\mbox{Fix}(\imath)=\emptyset \Longleftrightarrow \mbox{Fix}(\tau)=\emptyset.
\]
In comparison, Theorem \ref{Thm:cover} states that a singular K3 surface of odd discriminant $d$
admits an Enriques involution if and only if $d\equiv -7\mod 8$.
This proves Proposition \ref{Prop:sing-Enr} for all odd discriminants and $\MW$ rank one cases.
(As explained  in \ref{ss:EE'} such fibrations exist on $X$~if and only if the transcendental lattice is primitive and lies in the principal genus.)

We now consider the case where $X$ has even discriminant, i.e.~
the section $P$ meets the identity component of the $I_2$ fibre.
Then $\tau$ fixes both fibre components.
As they are isomorphic to $\PP^1$, 
there are fixed points.
(In fact one can see that $\tau$ fixes one component pointwise.)
In conclusion, the given elliptic fibration \eqref{eq:Inose} on $X$ does not admit an Enriques involution
of base change type.

This failure to produce an Enriques involution poses the problem how 
it can be overcome for the singular K3 surfaces in consideration for Proposition \ref{Prop:sing-Enr}.
Recall that we are in the special case where the fibration \eqref{eq:Inose} corresponds to $E\cong E'$.
The principal idea now is to choose an alternative elliptic  fibration of the same kind on $X$, 
but for a pair $(E,E')$ such that  $E\not\cong E'$
(resembling our approach in \ref{ss:EE'}).
Whenever this is possible, the new fibration falls under the next case of Mordell-Weil rank two,
and Proposition \ref{Prop:sing-Enr} can be proved along those lines.
Here we can vary the pair $(E, E')$ by conjugates $(E^\sigma, (E')^{\sigma^-1})$.
This fails to return a fibration of $\MW$ rank two if and only $E^\sigma\cong E^{\sigma^{-1}}$ for all Galois elements $\sigma$.
Equivalently, the class group is only two-torsion.
Note that $E\cong E'$ implies that $T(E\times E')=T(X)$ is primitive and lies in the principal genus.
Since  the same applies to all conjugates,
we derive the following abstract characterisation of the singular K3 surfaces where the Shioda-Inose structure does not produce an Enriques involution of base change type:

\begin{Exception}
\label{exc}
A singular K3 surface $X$ of even discriminant $d$ does not admit an elliptic fibration \eqref{eq:Inose}
of Mordell-Weil rank two 
if and only  if $T(X)$ is primitive and gives  the full principal genus of its class group.
In other words~$Q(X) = \mbox{diag}(2,|d|/2)$ and the class group $Cl(d)$ is only two-torsion.
\end{Exception}

There are 101 known discriminants $d<0$ such that $Cl(d)$ is only two-torsion;
the discriminant of biggest absolute value is $d=-7392$.
By \cite{Wb}, there could be one more such discriminant of size $>10^{10}$,
but this is ruled out by the extended Riemann hypothesis for odd real Dirichlet
characters.
Out of the 101 known discriminants,
$65$ are even (they were already studied by Euler, cf.~\cite{C})
and $-4, -8, -16$ are ruled out by Theorem \ref{Thm:cover},
so the above exception concerns 62 or 63 singular K3 surfaces.
We consider one of them in detail in \ref{ss:exc} after completing the proof of Proposition \ref{Prop:sing-Enr}..

\begin{Remark}
\label{Rem:exc}
For each of the 62 known singular K3 surfaces from Exception \ref{exc}, 
one could try to exhibit an Enriques involution as in \ref{ss:HS} for a different base change 
than in the Shioda-Inose structure.
However, there does not seem to be a universal way to achieve this.
Notably, the general K3 surface $X$ arising from the Shioda-Inose structure for the present case $E\cong E'$ only admits four essentially different jacobian elliptic fibrations.
To see this, one can argue with a gluing technique of Kneser-Witt 
that has been successfully applied to K3 surfaces by Nishiyama in \cite{Nishi}.
For these four fibrations, the fibre types reveal that only \eqref{eq:Inose} and one other fibration can arise through a quadratic base change.
The latter pulls back from the unique rational elliptic surface 
with a singular fibre of type $I_9$ and $\MW=\Z/3\Z$
by the one-dimensional family of quadratic base changes that ramify at the reducible fibre.
A case-by-case analysis (exactly as above) shows that a singular elliptic K3 surface within this family 
can only have an Enriques involution of base change type if it does not fall under Exception \ref{exc}.
\end{Remark}

\subsection{Mordell-Weil rank two}
\label{ss:2}

In this case, $E$ and $E'$ are isogenous, but non-isomorphic elliptic curves with CM.
Both fixed fibres for the base change involution $\imath$ at $t=\pm 1$ are smooth.
On the Nikulin quotient $X'$, they correspond to fibres of type $I_0^*$.
As explained, the fibration \eqref{eq:Inose} on $X$ has integral even Mordell-Weil lattice $\MWL(X) = \MWL(X')(2) = \mbox{Hom}(E,E')(2)$,
and 
\[
\NS(X) = U + 2E_8(-1) + \MWL(X)(-1).
\]
For an Enriques involution $\tau$ on $X$, 
we ask that some section $P\in\MWL(X)$ meets both fixed fibres
at non-identity components.
Equivalently,
there is a section $P'\in\MWL(X')$ (inducing $P$) 
that meets both  ramified fibres (type $I_0^*$)
at non-identity components.

\textbf{Assumption:}
There is no such section $P'\in\MWL(X')$.
Equivalently, since the simple components of a fibre admit a group structure,
the non-identity components of one of the $I_0^*$ fibres are fully avoided by $\MW(X')$.
Correspondingly, $\NS(X')$ admits an orthogonal summand $D_4(-1)$ which we single out in the following decomposition:
\[
\NS(X') = U + E_8(-1) + D_4(-1) + \langle D_4(-1), \MWL(X')(-1)\rangle.
\]
Hence the discriminant group of $\NS(X')$ contains two copies of $\Z/2\Z$ (coming from $D_4^\vee/D_4$).
Indeed, since the length is bounded by the rank of the transcendental lattice, i.e.~by two,
this gives the full $2$-part of the discriminant group:
\begin{eqnarray}
\label{eq:5.6}
2\text{-part}(\NS(X')^\vee/\NS(X')) \cong D_4^\vee/D_4\cong (\Z/2\Z)^2.
\end{eqnarray}
Right away, we deduce that $\NS(X')$ has discriminant $d'$ equalling four times an odd integer.
By \eqref{eq:TT}, this odd integer is exactly~the discriminant  $d=d'/4$ of $X$.
In particular, if $d$ is even, 
$\MWL(X')$ cannot fully avoid the non-identity components of either of the $I_0^*$ fibres.
Thus there is a  section of the fibration \eqref{eq:Inose} inducing an Enriques involution $\tau$ on $X$.

To complete the proof of Proposition \ref{Prop:sing-Enr},
we return to the case of odd discriminant $d$.
The isomorphism \eqref{eq:5.6} gives an equality of discriminant forms
\[
-q_{D_4} = q_{D_4} = \left(q_{\NS(X')}\right)|_{2\text{-part}}.
\]
By \cite{N},
there is an equality $q_{\NS(X')} = -q_{T(X')}$.
Hence it suffices to compare the discriminant forms of $T(X')$ and $D_4$.
In the present situation, $T(X')$ has the quadratic form
\[
\begin{pmatrix}
4a & 2b\\
2b & 4c
\end{pmatrix}
\]
with odd $b$.
Hence its discriminant form takes the following values on a set of representatives of the $2$-part of $T(X')^\vee/T(X')$:
\[
0,a,c,a+b+c\mod 2\Z.
\]
In comparison, $q_{D_4}$ does exclusively attain the value $1$ mod $2\Z$ on the non-zero elements of $D_4^\vee/D_4$.
For $T(X')$, this can only happen if all $a,b,c$ are odd.
Equivalently, the discriminant satisfies $d\equiv-3\mod 8$.
This is exactly the main case excluded by Theorem \ref{Thm:cover}.

Conversely, we deduce that a singular K3 surface $X$ admits an Enriques involution 
if it has an elliptic fibration \eqref{eq:Inose} of Mordell-Weil rank two and 
if either $d$ is even or $d\equiv -7\mod 8$.
The latter can be achieved unless $T(X)$ is primitive and corresponds to the principal class in its class group which is only two-torsion (cf.~Exception \ref{exc}).
This completes the proof of Proposition \ref{Prop:sing-Enr}.

\subsection{Appendix: Example \ref{Ex1}}
\label{ss:exc}

In this paragraph, we will show that the singular K3 surface $X$ from Example \ref{Ex1}
(which falls under Exception \ref{exc})
does admit an alternative elliptic fibration
with an Enriques involution of base change type.
We will pursue an abstract approach following ideas of Kneser-Witt as worked out for elliptic K3 surfaces by Nishiyama \cite{Nishi}.

\begin{Lemma}
$X$ has an elliptic fibration with $\Z/3\Z\subset\MW$ and two fibres of type $I_9$.
\end{Lemma}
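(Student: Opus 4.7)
The plan is to apply Nishiyama's lattice-theoretic classification of jacobian elliptic fibrations on singular K3 surfaces~\cite{Nishi}. Such fibrations correspond to primitive embeddings of $T(X)(-1)$ into one of the 23 Niemeier lattices with roots, and the orthogonal complement $T(X)(-1)^\perp$ inside the chosen Niemeier lattice $N$ encodes the fibration data: its root sublattice determines the reducible singular fibers via Kodaira's classification, its primitive closure in the complement carries the Mordell--Weil torsion, and the remaining free quotient yields the Mordell--Weil lattice.

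For the surface $X$ of Example~\ref{Ex1} with $T(X) = \mathrm{diag}(2,6)$, I would focus on the Niemeier lattice $N = N(A_8^{\oplus 3})$, whose root sublattice is $A_8 \oplus A_8 \oplus A_8$ glued by a nontrivial code inside the discriminant group $(\Z/9\Z)^{\oplus 3}$. The concrete step is to embed $T(X)(-1)$ primitively into a single $A_8$ summand of $N$; a natural candidate is the rank-two sublattice spanned by the simple root $\alpha_1$ (of norm $-2$) and the orthogonal sum $\alpha_3 + \alpha_5 + \alpha_7$ of mutually non-adjacent simple roots (of norm $-6$). Primitivity of this embedding is a direct check using the standard description of $A_8$ inside $\R^9$. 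The orthogonal complement of $T(X)(-1)$ in $N$ then contains the other two $A_8$ summands intact, producing two singular fibers of type $I_9$ in the resulting elliptic fibration on $X$.

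The $\Z/3\Z$ subgroup of the Mordell--Weil group is produced by the Niemeier glue: restricting the glue code to the two preserved $A_8$ factors yields a cyclic subgroup of $(\Z/9\Z)^{\oplus 2}$ whose order-three part descends to a nontrivial element of $\NS(X)/\Triv(X)$; by Shioda's formula $\MW = \NS(X)/\Triv(X)$ this corresponds to a torsion section of order three in the fibration. The main obstacle is verifying the discriminant-form matching required by Nishiyama's method: one must confirm that the orthogonal complement $T_0 = T(X)(-1)^\perp \subset N$ satisfies $q_{T_0} \cong q_{T(X)}$, so that the fibration is realized on the intended singular K3 surface $X$ rather than on another singular K3 with the same root-data profile. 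This check reduces to elementary arithmetic on the discriminant groups $(\Z/9\Z)^{\oplus 3}$ and $\Z/2\Z \oplus \Z/6\Z$; compatibility is expected because the $3$-part of $q_{T(X)}$ is accommodated by the glue contribution coming from the truncated first $A_8$ summand, while the $2$-part is carried by the norm-$-2$ generator $\alpha_1$.
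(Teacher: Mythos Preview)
Your proposal misapplies Nishiyama's method. The lattice one embeds into a Niemeier lattice is \emph{not} $T(X)(-1)$ itself, but a negative-definite partner lattice $M$ of rank $26-\rho(X)$ with $q_M=q_{T(X)}$; the point is that the orthogonal complement $W=M^\perp\subset N$ must satisfy $\NS(X)\cong U\oplus W$, so $\mathrm{rank}\,W=\rho(X)-2=18$ and hence $\mathrm{rank}\,M=6$. Embedding the rank-two lattice $T(X)(-1)=\mathrm{diag}(-2,-6)$ yields a complement of rank $22$, which cannot be the essential lattice of any jacobian elliptic fibration on a K3 surface with $\rho=20$. This is why the paper works with $M=A_1(-1)+A_5(-1)$: it has rank $6$ and the same discriminant form as $T(X)$, and it embeds primitively into a single $A_8(-1)$ summand of $N(A_8^3)$, leaving $(M^\perp)_{\mathrm{root}}=A_8(-1)^2$ and producing exactly the two $I_9$ fibres with the $\Z/3\Z$ torsion coming from the glue.

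Your concrete embedding also fails on its own terms: the orthogonal complement of $\langle\alpha_1,\alpha_3+\alpha_5+\alpha_7\rangle$ inside the first $A_8$ contains the roots $e_i-e_j$ with $i,j\in\{3,5,7\}$ and with $i,j\in\{4,6,8\}$, so $(M^\perp)_{\mathrm{root}}$ would be $A_2^2+A_8^2$ rather than $A_8^2$, giving two extra $I_3$ fibres you have not accounted for. Finally, the ``main obstacle'' you raise---matching discriminant forms a posteriori to identify the K3 surface---is not how the method works: one fixes $M$ at the outset precisely so that $q_M=q_{T(X)}$, and then every primitive embedding of $M$ into a Niemeier lattice yields a fibration on $X$ itself.
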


\begin{proof}
By \cite[\S6]{Nishi}
the elliptic fibrations on $X$ are classified by primitive embeddings of a certain partner lattice $M$ of $T(X)$ into Niemeier lattices.
Here we can take $M=A_1(-1) + A_5(-1)$ since $M$ and $T(X)$ have the same discriminant form.
Consider the Niemeier lattice $N$ with root lattice 
\[
N_\text{root}=A_8(-1)^3\;\;\; \text{ and quotient } \;\;\; N/N_\text{root} = (\Z/3\Z)^3.
\]
Embedding $M$ primitively into one summand $A_8(-1)$,
we obtain the essential lattice of an elliptic fibration of $X$ as orthogonal complement $M^\bot\subset N$.
The singular fibres of this fibration are encoded in the roots of $M^\bot$, i.e.~in $(M^\bot)_\text{root}=A_8(-1)^2$.
The torsion in $\MW$ for this fibration is isomorphic to the quotient of the primitive closure of $(M^\bot)_\text{root}$ in $N$ by $(M^\bot)_\text{root}$, i.e.~$\MW_\text{tor}\cong\Z/3\Z$.
\end{proof}

The given elliptic fibration is not isotrivial due to the singular fibres of type $I_9$.
The torsion in $\MW$ then implies that $X$ is a base change of the universal elliptic curve with $3$-torsion section and j-invariant not identical zero.
This elliptic surface has singular fibres $I_1, I_3, IV^*$,
so necessarily the base change factors through the intermediate rational elliptic surface $S'$ with configuration $I_1, I_1, I_1, I_9$ and $\MW(S')=\Z/3\Z$.
In particular, $X$ arises from $S'$ by a quadratic base change.
Hence we are in the set-up of \ref{ss:HS} with base change involution $\imath$ etc.

Now we consider the quadratic twist $X'$.
It is the desingularisation of the quotient of $X$ by the Nikulin involution $\jj=\imath\circ(-1)$.
We claim that this quotient exhibits another Shioda-Inose structure on $X$:

\begin{Lemma}
$X'$ is a Kummer surface with $T(X')=T(X)(2)$.
\end{Lemma}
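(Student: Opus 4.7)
The plan is to show that the Nikulin involution $\jj$ attached to the alternative $I_9$-fibration on $X$ plays the same role as the Nikulin involution in the Shioda-Inose structure of \ref{ss:SI2}: its quotient exhibits $X'$ as the Kummer surface with transcendental lattice $T(X)(2)$. The argument splits into the lattice computation $T(X') = T(X)(2)$ and the subsequent Kummer identification.

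First, since $\jj$ is a Nikulin involution, $\jj^*$ acts as the identity on $T(X)$, and the degree-two rational quotient map $\pi\colon X\dashrightarrow X'$ induces an injection $\pi^*\colon T(X')\hookrightarrow T(X)$ scaling the form by $2$. The standard projection identities $\pi_*\pi^* = 2\cdot\mathrm{id}$ on $T(X')$ and $\pi^*\pi_* = \jj^* + \mathrm{id} = 2\cdot\mathrm{id}$ on $T(X)$ sandwich the image, giving
\[
2T(X) \subseteq \pi^*T(X') \subseteq T(X),
\]
so $[T(X):\pi^*T(X')]\in\{1,2,4\}$. The index-$1$ case would force $T(X)(1/2)$ to be integral and even, which fails for $T(X) = \mathrm{diag}(2,6)$.

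Second, I would analyse the quotient fibration explicitly. The eight fixed points of $\jj$ sit as the four $2$-torsion points on each of the two smooth ramified fibres of $\imath$; they produce eight $A_1$-singularities on $X/\jj$ whose resolution gives two fibres of type $I_0^*$ on $X'$. The unramified $I_9$ and $I_1$ fibres of $X$ pair under $\imath$ and descend to one $I_9$ and three $I_1$ fibres. This accounts for the Euler number $9+3+2\cdot 6 = 24$, so $X'$ is K3 with configuration $I_9 + 3I_1 + 2I_0^*$. One then verifies that the index in the sandwich above is $4$, pinning down $T(X') = T(X)(2)$; the cleanest route is to check Morrison's Shioda-Inose criterion by exhibiting a $\jj$-invariant copy of $E_8(-2)$ inside $\NS(X)$, assembled from non-identity components of the two $I_9$ fibres together with torsion sections, so that $(X,\jj,X')$ becomes a Shioda-Inose triple in the sense of \cite{Mo}. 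The discriminant tally for the fibration on $X'$ (expected to give $\mathrm{disc}(\NS(X')) = 48 = \mathrm{disc}(T(X)(2))$) can serve as a cross-check.

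Finally, since $T(X') = T(X)(2) = \mathrm{diag}(4,12)$ is visibly $2$-divisible as an even lattice, Nikulin's criterion implies that the singular K3 surface $X'$ is Kummer. Concretely, Shioda-Mitani surjectivity together with \eqref{eq:E} yields the abelian surface $A = E_{\sqrt{-3}}\times E_{\sqrt{-3}}$ with $T(A) = T(X)$, and the Torelli theorem identifies $X' \cong \mathrm{Km}(A)$. The main obstacle in this plan is the second step: ruling out the index-$2$ cases in the sandwich, each of which would produce a singular K3 that is \emph{not} Kummer. Morrison's criterion appears to be the right tool, but its verification requires a careful location of a $\jj$-invariant $E_8(-2)$ inside the N\'eron-Severi lattice of $X$ in terms of the $I_9$-fibration data.
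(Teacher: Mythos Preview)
Your plan and the paper's proof both hinge on Morrison's criterion, so the strategy is the same; the difference is that the paper goes there directly and supplies the explicit construction you flag as the main obstacle. Concretely: take the $18$ components of the two $I_9$ fibres together with the two non-trivial $3$-torsion sections $Q$ and $\boxminus Q$ (these are the $20$ rational curves in play). From one $I_9$ delete the component met by $Q$, and from the other $I_9$ delete the component met by $\boxminus Q$. What remains splits into two disjoint $\tilde E_8$ diagrams, and since $\jj$ swaps the two $I_9$ fibres and swaps $Q\leftrightarrow\boxminus Q$, it visibly interchanges these two configurations. Morrison's Theorem~5.7 then yields both $T(X')=T(X)(2)$ and the Kummer property simultaneously.

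Against this, your sandwich inequality $2T(X)\subseteq\pi^*T(X')\subseteq T(X)$ and the separate deduction of the Kummer property from $2$-divisibility are correct but redundant once Morrison applies. One small slip: the version of Morrison's criterion you want is that $\jj^*$ \emph{swaps two orthogonal copies of $E_8(-1)$} (equivalently, $E_8(-2)$ embeds primitively in the \emph{anti}-invariant lattice), not that there is a $\jj$-invariant $E_8(-2)$.
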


\begin{proof}
It suffices to prove that $\jj$ is a Morrison-Nikulin involution, 
i.e.~$\jj^*$ exchanges two copies of $E_8(-1)$ in $\NS(X)$.
Here we argue with the above elliptic fibration:
$\jj$ exchanges the two reducible fibres of type $I_9$ and the three-torsion sections $Q, \boxminus Q$.
Consider these 20 rational curves on $X$.
Omitting the component of one $I_9$ fibre met by $Q$ and the component of the other $I_9$ fibre met by $\boxminus Q$,
we find two disjoint configurations of type $\te_8(-1)$
that are interchanged by $\jj$.
The lemma now follows from \cite[Thm.~5.7]{Mo}.
\end{proof}

The induced elliptic fibration on $X'$ has singular fibres $I_1, I_1, I_1, I_9, I_0^*, I_0^*$.
Since $\rho(X)=20$, both $X$ and $X'$ have $\MW$-rank two.
In particular, there are plenty of $\imath^*$-anti-invariant sections on $X$ (induced from $X'$).
As in \ref{ss:HS}, each such section gives an involution $\tau$.

\begin{Lemma}
There is a fixed-point free involution $\tau$ on $X$ as above.
\end{Lemma}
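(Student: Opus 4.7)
The plan is to adapt the strategy of \ref{ss:2} (the $\MW$-rank two case of Proposition \ref{Prop:sing-Enr}) to the alternative fibration on $X$. By the preceding lemma, $X$ arises from the rational elliptic surface $S'$ (with configuration $I_1^3 I_9$ and $\MW(S')=\Z/3\Z$) via a quadratic base change ramified over the two points $a,b\in\PP^1$ where $X'$ picks up its $I_0^*$ fibres; in particular the fixed fibres of the base-change involution $\imath$ on $X$ are smooth. Following \ref{ss:HS}, any $\imath^*$-anti-invariant section $P\in\MW(X)$ defines an involution $\tau=\boxplus P\circ\imath$, and since the fixed fibres are smooth, $\tau$ is fixed-point-free precisely when $P(a),P(b)\neq O$. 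As $P$ lifts from some $P'\in\MW(X')$ and must specialise to a $2$-torsion point at each ramified fibre, this condition translates to: \emph{$P'$ meets a non-identity component at each of the two $I_0^*$ fibres of $X'$}.

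To produce such a $P'$, I would run the discriminant-length argument of \ref{ss:2}. From $T(X)=\mbox{diag}(2,6)$ one has $T(X')=T(X)(2)=\mbox{diag}(4,12)$, whose discriminant group has $2$-part $(\Z/4\Z)^2$. Suppose, for contradiction, that $\MW(X')$ avoids the non-identity components of (say) the $I_0^*$ fibre over $a$. Then the four non-identity components of that fibre generate an orthogonal direct summand $D_4(-1)\subset\NS(X')$, so the $2$-part of $\NS(X')^\vee/\NS(X')$ contains $(\Z/2\Z)^2$ as a direct summand. Together with the Nikulin bound $\ell\bigl(\NS(X')^\vee/\NS(X')\bigr)\leq\rank T(X')=2$, this forces the $2$-part to equal $(\Z/2\Z)^2$, contradicting the computation above. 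Hence at each of the two $I_0^*$ fibres separately, some section of $\MW(X')$ meets a non-identity component.

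To pass from this pointwise statement to a single $P'$ meeting non-identity components of both fibres simultaneously, I would invoke a short group-theoretic argument on the component map $\phi:\MW(X')\to\Phi_1\times\Phi_2$ with $\Phi_i\cong(\Z/2\Z)^2$: any $\F_2$-subspace of $(\Z/2\Z)^4$ contained in the ``L-shape'' $(\Phi_1\times 0)\cup(0\times\Phi_2)$ must lie in one of the two factors, which is ruled out by the previous step. Hence some $P'$ has $\phi(P')$ non-zero in both coordinates; the induced $P\in\MW(X)$ then yields the required fixed-point-free involution $\tau$. The one subtlety I would need to handle carefully is the precise correspondence ``$P'$ meets a non-identity simple component of $I_0^*$ on $X'$ $\Longleftrightarrow$ $P(a)\neq O$ on $X$,'' which rests on the description of the $I_0^*$ fibre as a resolution of the quadratic twist of the smooth fibre at its $2$-torsion points, together with the fact that an $\imath^*$-anti-invariant section on $X$ specialises to $2$-torsion at each ramified fibre.
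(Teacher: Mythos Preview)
Your proposal is correct and follows essentially the same route as the paper's proof, just with more detail spelled out. The paper argues in two sentences: assuming no suitable $P'$ exists, one $I_0^*$ fibre is fully avoided by $\MW(X')$ (this is the step you make explicit via the ``L-shape'' group argument, which the paper absorbs into the reference to \ref{ss:2}); then, exactly as in \ref{ss:2}, the resulting $D_4(-1)$ summand forces the discriminant of $X'$ to be four times an odd integer, contradicting $\disc X'=-48$. Your version replaces this last numerical observation by the equivalent statement that the $2$-part of the discriminant group would be $(\Z/2\Z)^2$ rather than the actual $(\Z/4\Z)^2$.
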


\begin{proof}
We verify the claim on $X'$ by assuming the contrary.
This means that for one of the $I_0^*$ fibres all non-identity components are avoided by $\MW(X')$.
As in \ref{ss:2}, this implies that $X'$ has discriminant four times an odd integer.
But we have seen that $X'$ has $T(X')=T(X)(2)$ with discriminant $-48$.
This gives a contradiction.
%
\end{proof}


\begin{Remark}
This example also shows that not every singular Enriques surface arises by the canonical Shioda-Inose structure from \ref{ss:SI2}.
This fact can also be seen in terms of Enriques surfaces with finite automorphism group.
Kond\=o classified these exceptional Enriques surfaces in \cite{K-E}.
Some are singular, but do not admit an elliptic fibration with a $II^*$ fibre.
\end{Remark}

\subsection{Brauer groups}
\label{ss:Brauer}

In \cite{HS}, we also answered a question by Beauville about Brauer groups.
Namely Beauville asked for explicit examples of complex Enriques surfaces $Y$ where the 
Brauer group $\Br(Y)\cong\Z/2\Z$ pulls back identically zero to the covering K3 surface $X$
via the universal cover $\pi: X \to Y$.
He also raised the question whether such an example exists over $\Q$.

In \cite[\S5]{HS}, we gave affirmative solutions for both questions.
Our basic objects were the singular K3 surfaces $X$ with
\begin{eqnarray}
\label{eq:M-N}
\NS(X) = U + 2E_8(-1) + \langle -4M\rangle + \langle -2N\rangle
\end{eqnarray}
where $M,N\in\N$ and $N>1$ is odd.
The above decomposition corresponds to an elliptic fibration \eqref{eq:Inose} on $X$ with $\MW$-rank two.
As in \ref{ss:HS}, the section $P$ of height $4M$ induces an Enriques involution $\tau$ on $X$.
Clearly the orthogonal section of height $2N$ gives an anti-invariant divisor for $\tau^*$.
By \cite{Beau}, this implies the vanishing of $\pi^*\Br(Y)$.

Previously we determined one surface (for $M=1, N=3$) with a model of \eqref{eq:Inose} and Enriques involution $\tau$ defined over $\Q$.
Here we want to point out that for any other surface $X$ as above,
this can be achieved over the class field $H(-8MN)$ by Theorem \ref{thm}.


\section{Classification problems}
\label{s:class}

We conclude this paper by formulating classification problems for singular Enriques surfaces.
In addition to fields of definition, we also consider Galois actions on divisors.
First we review the situation for singular K3 surfaces.

\subsection{Obstructions for singular K3 surfaces}
\label{ss:obs}

Although singular K3 surfaces can often be descended from the ring class field $H(d)$ to some smaller number field,
there are certain obstructions to this descent.
In this section we shall discuss two of them.
The first comes from the transcendental lattice.
Since the N\'eron-Severi lattice of a general K3 surface is determined by intersection numbers,
it is a geometric invariant,
i.e.~conjugate surfaces have the same $\NS$.
Since $T(X)$ and $\NS(X)$ are related as orthogonal complements in the K3 lattice $\Lambda$,
they share the same discriminant form up to sign by \cite[Prop.~1.6.1]{N}.
In particular, this fixes the genus of $T(X)$ (sometimes also called the isogeny class).

\begin{Theorem}[Shimada {\cite{Shimada-T}}, Sch\"utt {\cite{S-fields}}]
\label{Thm:genus}
Let $X$ be a singular K3 surface $X$ over some number field.
The transcendental lattices of $X$ and its Galois conjugates cover the full genus of $T(X)$.
\end{Theorem}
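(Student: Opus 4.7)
The strategy is to establish the two inclusions separately: Galois conjugation preserves the genus of the transcendental lattice (routine), and every class in the genus is actually realised (the substance).

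For the first inclusion, I would argue purely on the lattice side. For any $\sigma\in G_L$, intersection numbers are geometric invariants, so $\NS(X^\sigma)$ is isometric to $\NS(X)$. Since $H^2(X,\Z)$ is the unimodular K3 lattice and $T(X)=\NS(X)^\bot$, the discriminant forms satisfy $q_{T(X)}=-q_{\NS(X)}$ by \cite[Prop.~1.6.1]{N}, and likewise for $X^\sigma$. Hence $T(X^\sigma)$ and $T(X)$ carry the same discriminant form on the same underlying signature, so they lie in the same genus.

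For the substantive direction I would pass to the abelian surface side via the Shioda--Inose structure of \ref{ss:SI}. By Shioda--Mitani, $X$ arises from a singular abelian surface $A=E_\tau\times E_{\tau'}$ of product type with $T(X)=T(A)$, where $E_\tau,E_{\tau'}$ are the CM curves from \eqref{eq:E}. By Theorem \ref{Thm:H(d)} we may fix $X$ over $H(d)$, and the Kummer, Nikulin and quadratic base-change operations linking $A$ and $X$ are intrinsic, so $T(X^\sigma)=T(A^\sigma)$ for every $\sigma$. It therefore suffices to prove that the abelian surfaces $A^\sigma$, as $\sigma$ varies over $\Gal(H(d)/K)$ with $K=\Q(\sqrt d)$, exhaust the genus of $T(A)$.

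Here I would feed in the main theorem of complex multiplication (cf.~\cite[\S 5]{Shimura}): via the Artin map, $\sigma\in\Gal(H(d)/K)$ corresponds to an ideal class $[\mathfrak{a}]\in Cl(d)$, and translates the period lattice of any CM elliptic curve by $\mathfrak{a}^{-1}$. Under the identification of the intersection form of $T(E_{\tau_1}\times E_{\tau_2})$ with a class in $Cl(d)$ via Gauss composition (Shioda--Mitani), the fact that $\sigma$ acts on \emph{both} factors $E_\tau$ and $E_{\tau'}$ implies that the class of $T(A^\sigma)$ differs from $[T(A)]$ by $[\mathfrak{a}]^{-2}$. As $\sigma$ ranges over $\Gal(H(d)/K)$, the element $[\mathfrak{a}]^{-2}$ ranges over the subgroup $Cl(d)^2$ of squares, so the orbit of $[T(A)]$ is exactly the coset $[T(A)]\cdot Cl(d)^2$. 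By classical genus theory of binary quadratic forms, this coset is precisely the genus of $T(A)$. Passing from $L$-conjugates to $\Q$-conjugates (or simply including complex conjugation, which inverts classes in $Cl(d)$) only enlarges the orbit and stays inside the same genus by the first inclusion.

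The main obstacle is the book-keeping required to make the Shioda--Inose correspondence genuinely Galois-equivariant, i.e.\ to guarantee that $X^\sigma$ is the Shioda--Inose K3 surface of $A^\sigma$ rather than some twist. Because Inose's pencil \eqref{eq:Inose} is written in terms of the $j$-invariants of $E_\tau$ and $E_{\tau'}$, on which Galois acts by the classical CM formulas, once a model over $H(d)$ is fixed this functoriality is essentially automatic; the rest of the argument is a translation of classical CM theory into the language of transcendental lattices.
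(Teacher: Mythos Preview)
The paper does not supply a full proof of Theorem~\ref{Thm:genus}. The easy inclusion (conjugates stay in the same genus) is sketched in the paragraph immediately preceding the statement, exactly along your lines via $\NS(X^\sigma)\cong\NS(X)$ and \cite[Prop.~1.6.1]{N}; the substantive surjectivity is simply quoted from \cite{Shimada-T} and \cite{S-fields}.

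Your plan for surjectivity is the argument of \cite[\S 5--6]{S-fields}: pass to $A=E\times E'$ via the Shioda--Inose structure, let $\sigma\leftrightarrow[\mathfrak a]\in Cl(d)$ act on both CM factors, read off that the class of $T(A^\sigma)$ is $[T(A)]\cdot[\mathfrak a]^{\pm2}$, and invoke classical genus theory to identify the orbit $[T(A)]\cdot Cl(d)^2$ with the genus. The paper itself records the companion identity $T(E^\sigma\times(E')^{\sigma^{-1}})=T(E\times E')$ in the Remark following Lemma~\ref{Lem:cases}, which is the same computation with the sign of one exponent flipped. The Galois-equivariance you flag as the main obstacle is indeed handled by the explicit dependence of \eqref{eq:Inose} on $j(E),j(E')$, so that conjugating the model amounts to conjugating the pair of $j$-invariants. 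Shimada's independent proof in \cite{Shimada-T} goes through supersingular reduction and is of a rather different flavour, but that is not reproduced here either.

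One bookkeeping point worth making explicit in a write-up: since $T(X)$ depends only on the $\C$-isomorphism class, replacing the given model over $L$ by the $H(d)$-model from Theorem~\ref{Thm:H(d)} does not change the set $\{T(X^\sigma):\sigma\in G_\Q\}$, which is what legitimises your reduction.
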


This result has an immediate consequence on the fields of definition:

\begin{Corollary}
Let $X$ be a singular K3 surface $X$ of discriminant $d$ over a number field $L$.
Let $K=\Q(\sqrt{-d})$ and $\bar L$ the Galois closure of $L$ over $K$.
Denote by $\mathcal G(X)$ the genus of $T(X)$.
Then 
\[
 \#\mathcal G(X) \mid \deg_K L.
\]
\end{Corollary}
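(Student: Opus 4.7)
The plan is to combine Theorem \ref{Thm:genus} with a standard orbit-stabilizer argument. First I interpret $\deg_K L$ as $[LK:K]$ and reduce to the case $K\subseteq L$: passing from the model $X/L$ to $X/LK$ neither changes the set of conjugates over $K$ nor the quantity $\deg_K L$. With $\bar L$ the Galois closure of $L$ over $K$ as in the statement, set $G=\Gal(\bar L/K)$ and $H=\Gal(\bar L/L)$, so that $[G:H]=[L:K]=\deg_K L$.

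Next I would consider the set $\mathcal{C}$ of $\bar L$-isomorphism classes of $K$-conjugates of $X$. The group $G$ acts on $\mathcal{C}$, and by definition this action is transitive. Let $N\le G$ denote the stabilizer of $[X]$. Every $\sigma\in H$ fixes $L$ pointwise and thus fixes the chosen $L$-model of $X$, so $H\subseteq N$. Orbit-stabilizer then yields
\[
|\mathcal{C}| = [G:N] \;\mid\; [G:H] = [L:K].
\]

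The remaining step is to identify $|\mathcal{C}|$ with $\#\mathcal{G}(X)$. The assignment $[X']\mapsto T(X')$ defines a map from $\mathcal{C}$ to oriented isometry classes of rank-two even positive definite lattices. By Theorem \ref{Thm:genus} this map surjects onto the full genus $\mathcal{G}(X)$, and by the Torelli theorem \cite{PSS}, \cite{SI} a singular K3 surface is determined up to isomorphism by its oriented transcendental lattice, so the map is also injective. Hence $|\mathcal{C}|=\#\mathcal{G}(X)$, and combining with the previous step we obtain $\#\mathcal{G}(X)\mid\deg_K L$, as claimed.

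The main delicate point is to match the notion of genus on both sides: $\mathcal{G}(X)$ should be read as the set of genus classes of oriented transcendental lattices, which is precisely what Theorem \ref{Thm:genus} fills out under Galois conjugation and what the Torelli theorem classifies as isomorphism classes of singular K3 surfaces. Once this bookkeeping is pinned down, the Corollary reduces to a one-line orbit-stabilizer computation.
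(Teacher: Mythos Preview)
Your proof is correct and follows exactly the approach the paper has in mind: the paper gives no explicit argument for this Corollary, treating it as an immediate consequence of Theorem~\ref{Thm:genus}, and your orbit--stabilizer computation is precisely the natural way to spell this out. Your identification of the one genuine subtlety---that $\mathcal G(X)$ must be read as the set of \emph{oriented} (i.e.\ $\mathrm{SL}_2(\Z)$-) classes in the genus, so that Torelli gives injectivity and Theorem~\ref{Thm:genus} gives surjectivity of $[X']\mapsto T(X')$ on the $K$-conjugates---is on point and is indeed the only thing requiring care.
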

In particular, one deduces that a singular K3 surface $X$ can only be  defined over $\Q$ 
if the genus of $T(X)$ consists of a single class.


The second obstruction stems from the Galois action on the divisors.
Namely, even if a singular K3 surface $X$ admits a model over a smaller field than $H(d)$,
the ring class field is preserved through the Galois action on $\NS(X)$:

\begin{Theorem}[Sch\"utt {\cite{S-NS}}]
\label{Thm:NS}
Let $X$ be a singular K3 surface of discriminant $d$ over some number field $L$. 
Assume that $\NS(X)$ is generated by divisors defined over $L$. 
Then the extension $L(\sqrt{d})$ contains the ring class field $H(d)$.
\end{Theorem}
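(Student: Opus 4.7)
My approach runs the Shioda--Inose machinery of Sections~\ref{s:K3}--\ref{s:Kummer} in reverse: from an $L$-rational model of $X$ with $\NS(X)$ defined over $L$, I descend to an $L$-rational model of the associated pair of CM elliptic curves, and then invoke classical complex multiplication.

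\emph{Reduction.} Since the hypothesis on $\NS(X)$ is only strengthened and the conclusion $H(d)\subset L(\sqrt d)$ is unchanged upon replacing $L$ by $L(\sqrt d)$, I may assume $K:=\Q(\sqrt d)\subset L$ throughout; the goal then becomes $H(d)\subset L$.

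\emph{Descent of the CM pair.} Fix a model of the Inose pencil \eqref{eq:Inose} on $X$ over $L$. The hypothesis forces every Mordell--Weil section, the two $II^*$ fibers, and all reducible-fibre components to be $L$-rational; in particular $\MWL(X)$ is $L$-trivial as a Galois module. The quadratic base change $t=u^2$ (cf.~\ref{ss:aux}) yields an $L$-rational model of the Kummer surface $X'=\Km(E\times E')$ together with the fibration of Proposition~\ref{Prop:Km-MW}. Arranging models as in \ref{ss:Gal-eq} -- which is possible because $K\subset L$ -- makes Shioda's isomorphism $\MWL(X)\cong\Hom(E,E')(2)$ from \eqref{eq:isos} Galois-equivariant over $L$, so $\Hom(E,E')$ becomes an $L$-rational Galois module. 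Composing two $\Z$-independent $L$-rational homomorphisms then produces a non-scalar $L$-rational element of $\Hom(E,E)$, realising the full CM order $\OO$ of discriminant $d$ by $L$-rational endomorphisms of a suitable $L$-twist of $E$.

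\emph{CM conclusion and main obstacle.} Any elliptic curve $E$ over a field $F\supset K$ whose CM ring $\OO$ is defined over $F$ satisfies $j(E)\in F$, and $K(j(E))=H(d)$ by the main theorem of complex multiplication \cite[Thm.~5.7]{Shimura}. Applied with $F=L$ this yields $H(d)\subset L$, completing the proof after undoing the reduction. The chief technical hurdle is the Galois-equivariance of Shioda's isomorphism \eqref{eq:isos} for concrete $L$-rational models of both $X$ and $(E,E')$, obtained by inverting the quadratic-twist bookkeeping of \ref{ss:Gal-eq}. This argument is cleanest when the Inose pencil has $\MW$-rank two (second row of Table~\ref{tab:MW}); the degenerate cases ($E\cong E'$ or $j\in\{0,12^3\}$) need separate treatment, either by using $L$-rational reducible-fibre components as substitutes for the missing Mordell--Weil generators, or by replacing $(E,E')$ by conjugates $(E^\sigma,(E')^{\sigma^{-1}})$ as in \ref{ss:EE'} to restore $\MW$-rank two and feed it back into the main argument.
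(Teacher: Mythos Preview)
The paper does not prove Theorem~\ref{Thm:NS}: it is quoted from \cite{S-NS} as an external input, so there is no ``paper's own proof'' to compare against. The argument in \cite{S-NS} is arithmetic rather than geometric---it passes through the $2$-dimensional Galois representation on $T(X)\otimes\Q_\ell$ and its interpretation via a Hecke character of $K=\Q(\sqrt d)$, then uses class field theory to force $H(d)\subset L(\sqrt d)$. Your proposal is a genuinely different, geometric strategy.

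That said, your argument has a real gap at its centre. You write that ``arranging models as in \ref{ss:Gal-eq}\ldots makes Shioda's isomorphism $\MWL(X)\cong\Hom(E,E')(2)$ Galois-equivariant over $L$'', and then deduce that $\Hom(E,E')$ is $L$-trivial. But the construction in \ref{ss:Gal-eq} runs in the opposite direction: it \emph{starts} from $H(d)$-rational models of $E,E'$ (supplied by CM theory) and \emph{builds} the model of $X$. For your reversal, the Galois group $G_L$ must already act on $\Hom(E,E')$ before you can compare it to $\MWL(X)$, and that only makes sense once $E,E'$ are defined over $L$---which is essentially what you are trying to prove. Your phrase ``a suitable $L$-twist of $E$'' presupposes $j(E)\in L$, so the subsequent sentence ``$j(E)\in F$'' is circular. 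The honest version of your descent would have to extract $j(E),j(E')\in L$ directly from the $L$-rational Inose model \eqref{eq:model}, but the coefficients of that model only give the symmetric functions $jj'$ and $(12^3-j)(12^3-j')$ in $L$, so a priori $j,j'$ live in a quadratic extension of $L$ that you have not identified with $L(\sqrt d)$. Passing instead to the Kummer quotient $X'$ does not immediately help either: while the Nikulin involution is $L$-rational by Proposition~\ref{Lem}, the eight exceptional curves on $X'$ are the resolved $2$-torsion points of the fibres at $t=\pm1$ and need not be individually $L$-rational, so you cannot conclude that $\NS(X')$---and hence the isotrivial fibration recovering $j(E')$---is defined over $L$.
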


In other words, Theorem \ref{Thm:NS-H(d)} is not far from being optimal:
at best, there is a model with $\NS(X)$ defined over a quadratic subfield of $H(d)$.


Theorem \ref{Thm:NS}  provides a direct proof of 
the following natural generalisation from CM elliptic curves (Shafarevich \cite{Shafa}):
Fixing $n\in\N$,
there are only finitely many singular K3 surfaces over all number fields of  degree bounded by $n$
(up to complex isomorphism).
The problem of explicit classifications, however, is still wide open.
Even in the simplest case, it is not clear yet how many singular K3 surfaces there are over $\Q$
-- only that there are many, cf.~\cite{ES}.
In contrast, the restrictive setting of Theorem \ref{Thm:NS} is much more accessible.
For instance there are exactly $13$ singular K3 surfaces up to $\bar\Q$-isomorphism with $\NS$ defined over $\Q$.
By \cite[Thm.~1]{S-NS}, they stand in bijective correspondence with the discriminants $d$ of class number one.

We shall now discuss how these obstructions turn out for singular Enriques surfaces.
Then we formulate analogous classification problems.

\subsection{Fields of definition of singular Enriques surfaces}
\label{ss:fields}

We start by pointing out that Theorem \ref{Thm:genus} carries over to singular Enriques surfaces directly.
This fact is due to the universal property that defines the covering K3 surface $X$ of an Enriques surface $Y$.
Explicitly, $X$ can be defined universally as
\[
X = \mbox{Spec}(\OO_Y \oplus \mathcal{K}_Y).
\]
As this construction respects the base field,
the obstructions from Theorem \ref{Thm:genus} on the field of definition of a singular K3 surface $X$
carry over to each singular Enriques surface that is covered by $X$.
Recall that a K3 surface may admit (arbitrarily) finitely many distinct Enriques quotients by \cite[Thm.~0.1]{Ohashi},
while the universal cover associates a unique K3 surface to a given Enriques surface.

\begin{Corollary}
Let $n\in\N$.
There are only finitely many singular Enriques surfaces 
over all number fields of degree at most $n$
up to complex isomorphism.
\end{Corollary}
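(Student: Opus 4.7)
The plan is to reduce the statement for singular Enriques surfaces to the corresponding finiteness statement for singular K3 surfaces that was already noted by the authors (as a consequence of Theorem \ref{Thm:NS}), and then to control the fibres of the universal-cover map.

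First I would invoke the universal property
\[
X = \mathrm{Spec}(\OO_Y\oplus\mathcal K_Y)
\]
recalled in \ref{ss:fields}. Since this construction is functorial and respects the base field, any singular Enriques surface $Y$ defined over a number field $L$ has its covering singular K3 surface $X$ also defined over $L$. Hence if $Y$ is defined over a number field of degree at most $n$, then so is $X$.

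Next I would apply the K3 analogue of Shafarevich's finiteness theorem that the authors state at the end of \ref{ss:obs} (as a consequence of Theorem \ref{Thm:NS}): fixing $n\in\N$, there are, up to complex isomorphism, only finitely many singular K3 surfaces definable over some number field of degree $\le n$. Thus the set of possible universal covers $X$ of the $Y$'s we are counting is finite up to $\C$-isomorphism.

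Finally I would pass from K3 surfaces to Enriques quotients. For this the key input is Ohashi's theorem \cite[Thm.~0.1]{Ohashi}, already quoted in \ref{ss:fields}, which says that a fixed complex K3 surface admits only finitely many Enriques quotients up to isomorphism. Since two Enriques surfaces with the same universal cover $X$ correspond to distinct Enriques involutions on $X$ (equivalently, to distinct Enriques quotients), each of the finitely many $X$'s from the previous step contributes only finitely many singular Enriques surfaces. Combining the two finiteness statements yields the corollary.

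The argument is essentially formal once the two quoted inputs are in place. The main conceptual point, and the only place where something could fail, is the matching of base fields in the first step: one must know that the universal cover of an Enriques surface defined over $L$ is itself defined over $L$, and this is exactly what the universal description of $X$ above delivers. No genus obstruction or Galois-descent analysis beyond what is already in Section \ref{ss:obs} is needed for this qualitative finiteness statement; the quantitative refinements (and the obstructions from Theorem \ref{Thm:genus} together with the Corollary on $\#\mathcal G(X)\mid \deg_K L$) become relevant only if one wishes to enumerate examples, which is the content of the classification problems formulated afterwards.
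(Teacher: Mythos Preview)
Your proposal is correct and follows essentially the same route as the paper: pass to the universal cover using $X=\mathrm{Spec}(\OO_Y\oplus\mathcal K_Y)$ to keep the base field, invoke the Shafarevich-type finiteness for singular K3 surfaces from \S\ref{ss:obs}, and then apply Ohashi's finiteness of Enriques quotients of a fixed K3 surface. The paper leaves the corollary unproved, relying on exactly the three ingredients you spell out in the paragraph preceding it.
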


\begin{Problem}
The following two questions concern singular Enriques surfaces up to $\bar\Q$-isomorphism:
\begin{enumerate}
 \item 
For $n\in\N$, find all singular Enriques surfaces over number fields $L$ of degree at most $n$ over $\Q$.
\item
Specifically classify all singular Enriques surfaces over $\Q$.
\end{enumerate}
\end{Problem}

\subsection{Galois action on divisors}
\label{ss:Gal}

Upon translating the obstructions for singular K3 surfaces from \ref{ss:obs} to singular Enriques surfaces,
we have seen in \ref{ss:fields} that Theorem \ref{Thm:genus} and its corollary carry over directly to the Enriques quotients.
In contrast, Theorem \ref{Thm:NS} has to be weakened on the Enriques side.
Generally speaking, this weakening is due to the fact that (part of) the Galois action can be accomodated by a sublattice of $\NS(X)$ that is killed by the Enriques involution.
In support of these ideas,
we shall review an example from \cite{HS} (that draws heavily from \cite{ES-19}).

Consider the following family $\mX$ of elliptic K3 surfaces
\begin{eqnarray}
\label{eq:mX}
\mathcal X:\;\; y^2 = x^3 + t^2 x^2 + t^3 (t-a)^2 x, \;\;\; a\neq 0.
\end{eqnarray}
This elliptic fibration has reducible singular fibres of type $III^*$ at $0$ and $\infty$ and $I_4$ at $t=a$.
The general member has Picard number $\rho(\mX)=19$ with 
\[
\MW(\mX)=\{O, (0,0)\}\cong\Z/2\Z.
\]
Note that $\mX$ is of base change type -- apply the base change $s=(t-a)^2/t$ to the rational elliptic surface $S$ with Weierstrass form
\[
 S:=\;\; y^2 = x^3 + x^2 + sx.
\]
As in \ref{ss:HS}, the two-torsion section induces an Enriques involution $\tau$ (unless the other singular fibres degenerate, i.e.~unless $a=-1/16$).
Denote the family of Enriques quotients by $\mY$.
We first study the Galois action on $\Num(\mY)$:

\begin{Lemma}
\label{Lem:Y_a}
Let $Y_a\in\mY \;(a\neq -1/16)$.
Then $\Num(Y_a)$ is defined over $\Q(a)$.
\end{Lemma}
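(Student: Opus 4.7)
The plan is to reduce the lemma to the stronger claim that $\NS(\mX)$ itself is defined over $\Q(a)$. Once that is established, Corollary \ref{Cor} implies that the Enriques involution $\tau = \imath \circ \boxplus(0,0)$ is $\Q(a)$-rational, so the universal cover $\pi \colon \mX \to Y_a$ descends to $\Q(a)$ and the injective pullback $\pi^* \colon \Num(Y_a) \hookrightarrow \NS(\mX)^{\tau^*}$ transports trivial Galois action from the ambient lattice to its image, giving the conclusion.

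To verify the key claim, I would first write down an explicit generating set of $\NS(\mX)$. For general $a$ one has $\rho(\mX)=19$, so by the Shioda--Tate formula the trivial lattice $\Triv(\mX)$---generated by the zero section $O$, the fibre class $F$, the seven non-identity components in each $III^*$ at $t=0,\infty$, and the three non-identity components in the $I_4$ at $t=a$---is already of full rank $19$; together with the two-torsion section $(0,0) \in \MW(\mX)$ it generates all of $\NS(\mX)$. The sections $O$, $F$, $(0,0)$ are visibly $\Q(a)$-rational from \eqref{eq:mX}. For the $III^*$ fibres, Tate's algorithm applied over $\Q(a)$ produces every component over $\Q(a)$; moreover the only nontrivial symmetry of the extended diagram $\te_7$ swaps the two ends of the long chain and therefore cannot fix the affine component met by $O$, so no outer Galois action is possible on these components in any case.

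The one case that genuinely requires computation is the $I_4$ at $t=a$, since $I_n$ fibres can a priori be non-split with components exchanged by Galois. Substituting $t=a$ into \eqref{eq:mX} yields the nodal cubic $y^2 = x^2(x+a^2)$, whose tangent directions at the origin are $y = \pm a\, x$; since $a \in \Q(a)$, both tangents are rational, the $I_4$ is split, and the standard sequence of blow-ups supplies all four components over $\Q(a)$. Combining these observations, every generator of $\NS(\mX)$ is defined over $\Q(a)$, and the opening paragraph then yields the lemma. The split/non-split dichotomy for the $I_4$ is precisely where the main obstacle lies: had the nodal tangents required $\sqrt{a}$, a quadratic extension would have been forced on those components and one would be pushed to a subtler argument showing that $\tau^*$ nevertheless swaps the two Galois-conjugate components inside $\pi^*\Num(Y_a)$, so that the Galois action cancels after descending to the Enriques quotient; here the linearity of the tangent equation in $a$ sidesteps any such complication.
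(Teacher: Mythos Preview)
Your argument establishes the lemma only for the generic member with $\rho(X_a)=19$, but the statement is meant to cover \emph{all} $a\neq -1/16$, including the countably many CM specialisations where $\rho(X_a)=20$. At those points the trivial lattice together with the two-torsion section $(0,0)$ span only a corank-one sublattice of $\NS(X_a)$; the missing generator is an infinite-order section, and by Theorem~\ref{Thm:NS} it is typically \emph{not} defined over $\Q(a)$ (cf.~the discussion in \ref{ss:CM}, where for class number two the extra section lives only over a quadratic extension). Hence your key claim that $\NS(X_a)$ is defined over $\Q(a)$ is false in general, and the appeal to Corollary~\ref{Cor} is not available.

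The paper sidesteps this by never asserting that all of $\NS(X_a)$ is $\Q(a)$-rational. Instead it observes that the $\tau^*$-invariant sublattice $\NS(X_a)^{\tau^*}$ (which has rank $10$ regardless of whether $\rho=19$ or $20$) is generated up to finite index by the fibre components and the two sections $O,(0,0)$; these are exactly the $\Q(a)$-rational curves you listed, so Galois acts trivially on $\NS(X_a)^{\tau^*}$ and hence on $\Num(Y_a)$ via $\pi^*$. The extra Mordell--Weil generator present at CM points is $\tau^*$-anti-invariant (it comes from $\MW(X')$ in the decomposition \eqref{eq:MWQ}), so it contributes nothing to $\NS(X_a)^{\tau^*}$. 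Your computation showing the $I_4$ fibre is split-multiplicative is correct and is precisely what the paper uses; the only repair needed is to aim it at $\NS(X_a)^{\tau^*}$ rather than at the full N\'eron--Severi group.
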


\begin{proof}
Since $\Num(Y_a)$ is torsion-free,
the Galois action on $\Num(Y_a)$ coincides with that on the invariant part of $\NS(X_a)$.
In the present situation, 
the $I_4$ fibre of $\mX$ is split-multiplicative, i.e.~all fibre components are defined over $\Q(a)$.
The same holds trivially for the fibres of type $III^*$.
Together with the sections $O$ and $(0,0)$,
these rational curves generate $\NS(X_a)^{\tau^*}$ up to finite index.
As this holds regardsless of the Picard number of $X_a$ (being $19$ or $20$),
the lemma follows.
\end{proof}

\begin{Remark}
It is crucial that the lemma holds for \emph{all} members of the family $\mY$, i.e.~also the singular ones.
Compare the situation for singular K3 surfaces in the family $\mX$ where Theorem \ref{Thm:NS} will often enforce a Galois action on the additional generator of  $\NS$.
For the specialisations over $\Q$ with $\rho=20$, see \ref{ss:CM}.
\end{Remark}

\subsection{N\'eron-Severi group}
\label{ss:NS}

We point out that in this specific setting,
Lemma \ref{Lem:Y_a} gives a stronger statement than Corollary \ref{Cor:Num}.
The situation gets more complicated if we consider $\NS(\mY)$ with its two-torsion
because this can admit a quadratic Galois action.
In particular, we can only conjecture an analogue of Corollary \ref{Cor:Num} for $\NS(Y)$
that is more precise than saying that $\NS(Y)$ is defined over some quadratic extension of $H(d)$
(Conjecture \ref{conj}).

The main problem here lies in similar subtleties as encountered in the context of 
cohomologically and numerically trivial involutions (see \cite[\S4]{HS} and the references therein).
Namely, to decide about $\NS(Y)$ it is necessary to work out generators of the full group (see Remark \ref{Rem:NS}).
We work this out for the family $\mY$ in detail:

\begin{Proposition}
\label{Prop:NS}
Let $Y_a\in\mY \;(a\neq -1/16)$.
Then $\NS(Y_a)$ is defined over $\Q(a, \sqrt{-a})$.
\end{Proposition}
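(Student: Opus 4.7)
By Lemma \ref{Lem:Y_a}, the Galois group $G=\Gal(\overline{\Q(a)}/\Q(a))$ acts trivially on $\Num(Y_a)$. Since $\NS(Y_a)$ is an extension
\[
0 \to \Z/2\cdot K_{Y_a} \to \NS(Y_a) \to \Num(Y_a) \to 0
\]
with $K_{Y_a}$ canonically Galois-fixed, the $G$-action on $\NS(Y_a)$ is governed by a cohomology class in $H^1(G,\Hom(\Num(Y_a),\Z/2))$. The proposition amounts to showing that this class vanishes after restriction to $\Gal(\overline{\Q(a)}/\Q(a,\sqrt{-a}))$.

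The plan is to exhibit explicit divisor representatives for a generating set of $\NS(Y_a)$, read off from the induced genus-one fibration $Y_a\to\PP^1_s$ with coordinate $s=(t-a)^2/t$. This fibration has a non-multiple fiber of type $III^*$ at $s=\infty$ (the image of the two $III^*$ fibers of $\mX$ at $t=0,\infty$), a multiple fiber $2I_2$ at $s=0$ (from the $I_4$ fiber at $t=a$), a multiple fiber $2I_0$ at $s=-4a$ (from the smooth fiber at $t=-a$), and an $I_1$ fiber (the two $I_1$ fibers of $\mX$ are interchanged by the base change involution). I would take as generators: the two half-fibers $F_1,F_2$ satisfying $F_1-F_2=K_{Y_a}$; the $2$-section $\tilde O$, obtained as the image of $O\cup(0,0)\subset\mX$ merged by the Enriques involution; the non-identity components of the $III^*$ fiber, each the image of a $\tau$-paired pair of components of the two $III^*$ fibers on $\mX$; and the non-identity component of the reduced $I_2$ half-fiber. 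All these supports lie above $\Q(a)$-rational base points and the relevant fiber components of $\mX$ are themselves $\Q(a)$-defined, so each divisor is Galois-invariant already over $\Q(a)$.

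These classes, however, span only the index-two sublattice $U\oplus E_7(-1)\oplus A_1(-1)$ of $\Num(Y_a)=U\oplus E_8(-1)$. The missing class $G$, which glues the $E_7$ and $A_1$ summands into the full $E_8$, has no representation as an integral combination of the above divisors and must come from an additional geometric source. The main obstacle is to realize $G$ as an honest divisor class on $Y_a$ over a controlled field. The proposal is to obtain $G$ from the nontrivial two-torsion section of the Jacobian fibration $J_a\to\PP^1_s$ (a rational elliptic surface with configuration $III^*+I_2+I_1$ and $\MW(J_a)=\Z/2$, hence with two-torsion defined over $\Q(a)$), transported to the Enriques torsor $Y_a$. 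This transport involves the local structure of the quadratic base change $\PP^1_t\to\PP^1_s$ at its ramification point $t=-a$: the Taylor expansion $(t+a)^2 = -a(s+4a)+O((s+4a)^2)$ forces a factor of $\sqrt{-a}$ into the comparison of local parameters, making the resulting representative of $G$ defined over $\Q(a,\sqrt{-a})$ but exchanged with $G+K_{Y_a}$ by the nontrivial element of $\Gal(\Q(a,\sqrt{-a})/\Q(a))$. This yields precisely the quadratic obstruction in $H^1(G,\Hom(\Num(Y_a),\Z/2))$, and adjoining $\sqrt{-a}$ trivialises it. Once $G$ is so realized, every generator of $\NS(Y_a)$ admits a Galois-invariant lift over $\Q(a,\sqrt{-a})$, and the proposition follows.
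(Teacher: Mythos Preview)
Your cohomological framing is sound and the divisor bookkeeping in the second paragraph is correct: the half-fibers, the bisection $\tilde O$, the $III^*$ components and the $I_2$ component are all $\Q(a)$-rational and generate exactly an index-two sublattice of $\NS(Y_a)$.  This is precisely the difficulty the paper flags in Remark~\ref{Rem:NS}: in the fibration inherited from \eqref{eq:mX} the two multiple fibres have different types $2I_0,\,2I_2$, sit over $\Q(a)$-rational points, and hence cannot carry the sought Galois action; moreover, the push-down of fibre components and torsion sections from $\mX$ misses $\NS(\mY)$ by index two.

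The gap is in your third paragraph.  ``Transporting'' the two-torsion section of the Jacobian $J_a$ to the torsor $Y_a$ is not a well-defined operation that produces a curve on $Y_a$.  The Jacobian's Mordell--Weil group acts on $Y_a$ by translation, but applying that action to the divisors you already have returns classes in the same index-two sublattice (translation by two-torsion permutes fibre components and the bisection among themselves).  Your local-parameter heuristic correctly detects that $\sqrt{-a}$ is the relevant quadratic irrationality, but it does not attach that irrationality to any specific divisor class on $Y_a$; without an honest representative for the missing class $G$ there is no way to read off its Galois behaviour, and the argument does not close.

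The paper avoids this obstacle by changing fibrations.  It passes to the Inose pencil on $\mX$, where a second, conjugate Enriques involution $\tau'$ (identified with $\tau$ via Kond\=o's classification) yields a fibration on $Y_a$ with a $II^*$ fibre and \emph{two smooth} multiple fibres.  In that model an explicit basis of $\NS(Y_a)$ is written down --- the $II^*$ components, the bisection $R$, and the two half-fibres $G_1,G_2$ --- and the half-fibres sit over the roots of $u^2+a^3$, making the $\sqrt{-a}$-dependence manifest.  If you want to rescue your approach, you would need a genuine construction of $G$ on $Y_a$ (for instance, as the image of an explicit curve on $\mX$ not coming from fibres or torsion sections); absent that, switching to the Inose model is the cleanest route.
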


\begin{proof}
The next remark will indicate that it is not sufficient to argue with the elliptic fibration \eqref{eq:mX} on $\mX$.
Instead, we consider Inose's fibration \eqref{eq:Inose} for the given family.
The following Weierstrass form was derived in \cite[\S5.3]{HS}:
\[
\mathcal X:\;\; 
y'^2 = x'^3+(9a-1)x'/9+\left(27\left(u-\frac{a^3}u\right)+81a+2\right)/27.
\]
There is a section $P$ of height $4$ (thus disjoint from the zero section) with $x'$-coordinate
\[
P_{x'}(u) = (3  u^4+12  u^3 a+6  u^2 a^3+4  u^2 a^2-12  u a^4+3 a^6)/(12 a^2u^2).
\]
The section $P$ is anti-invariant for the base change involution $\imath$ of the Shioda-Inose structure on $\mX$:
\[
\imath: (x',y',u) \mapsto (x',y',-a^3/u).
\]
The base change involution composed with translation by $P$ defines an Enriques involution $\tau'$ on $\mX$ by \ref{ss:HS}.
Denote the family of Enriques quotients by $\mY'$.
By Kond\=o's classification in \cite{K-E},
$\mY'$ has finite automorphism group, and
in particular $\tau$ and $\tau'$ are conjugate in $\Aut(\mX)$ so that $\mY\cong \mY'$.

We continue by determining an explicit basis of $\NS(\mY')$.
The induced elliptic fibration on $\mY'$ has a singular fibre of type $II^*$,
a bisection $R$ (the push-down of $O$ and $P$) and two multiple smooth fibres $F_1=2G_1, F_2=2G_2$.
We claim that these twelve curves generate $\NS(\mY')$.
To see this, note that by construction $R$ meets the simple component of the $II^*$ fibre twice.
The remaining fibre components form the root lattice of type $E_8(-1)$.
Orthogonally in $\NS(\mY')$, we find $R, G_1, G_2$.
Since $R^2=-2, R\cdot G_i=1$, we know that $R, G_1$ generate the hyperbolic plane $U$.
Thus we have determined a unimodular lattice $L=U+E_8(-1)$ inside $\NS(\mY')$ -- necessarily of index two due to its rank being ten.
Since $G_2\not\in L$, it follows that $L$ and $G_2$ generate all of $\NS(\mY')$.

We now consider the Galois action on these generators of $\NS(Y_a')$ for some $Y_a'\in\mY'$.
Clearly the $II^*$ fibre and the bisection $R$ are defined over $\Q(a)$.
The multiple fibres sit at the ramification points of the base change on the base curve $\PP^1$, i.e.~at the roots of $u^2+a^3$.
Proposition \ref{Prop:NS} follows and cannot be improved since
the conjugation of $\Q(\sqrt{-a})/\Q(a)$ permutes the multiple fibres 
 if $\sqrt{-a}\not\in\Q(a)$,
and thus gives a non-trivial Galois action on $\NS(Y_a')$.
\end{proof}

\begin{Remark}
\label{Rem:NS}
Note that the above Galois action is not visible on the elliptic fibration \eqref{eq:mX} of $\mX$ yielding $\mY$.
The multiple fibres of the induced elliptic fibration on $\mY$ have different type $2I_0, 2I_2$.
Hence they cannot be interchanged by Galois.
Nonetheless there can be a nontrivial Galois action on $\NS(Y_a)$.
This goes undetected in the above model because the push-down of fibre components and torsion sections from $\mX$ to $\mY$ generate $\NS(\mY)$ only up to index two.
\end{Remark}

\subsection{CM-points} 
\label{ss:CM}

Concretely, the family $\mX$ is parametrised by the Fricke modular curve $X_0(2)^+$.
In \cite{ES-19}, we list all $\Q$-rational CM-points.
Two of them give singular K3 surfaces without Enriques involution (discriminant $-8$ at $a=-1/16$ and discriminant $-4$ at $a=0$ for a suitable alternative model of $\mX$).
The other 14 discriminants are:
\[
 -7, -12, -16, -20, -24, -28, -36, -40, -52, -72, -88, -100, -148, -232.
\]
For the discriminants of class number two,
the additional section can only be defined over a quadratic extension of $\Q$ by Theorem \ref{Thm:NS}.
So there are indeed singular Enriques surfaces with $\Num$ defined over $\Q$
where the same does not hold for the covering K3 surfaces.
A detailed example where this holds even for $\NS$ is provided by the surfaces at $a=-1/144$ 
which corresponds to the discriminant $-24$ (as mentioned in \ref{ss:Brauer}).
Details can be found in \cite[\S 5.3]{HS}.
We work out one example from the list 
where $\Num$ is defined over $\Q$, but $\NS$ is neither defined over $\Q$ nor over $H(d)$:

\begin{Example}
\label{Ex:Gal}
The specialisation $X$ with discriminant $d=-12$ sits at $a=1/9$.
In terms of the elliptic fibration \eqref{eq:mX}, there is a section of height $3$ over $H(d)=\Q(\sqrt{-3})$ with $x$-coordinate
$-12t^3/(9t-1)^2$.
One finds that $X$ has transcendental lattice two-divisible,
so $X$ is the Kummer surface of $E\times E$ for $E$ with j-invariant zero.
In particular $X$ is different from the singular K3  surface studied in Example \ref{Ex1} and \ref{ss:exc}.

The Enriques quotient $Y$ has multiple fibres at $\pm \sqrt{-1}/27$.
Compared with $\Num(Y)$ which is defined over $\Q$, complex conjugation acts on $\NS(Y)$ as non-trivial Galois action. 
Note that $H(d)(\sqrt{-1})=H(4d)$ in the present situation.
\end{Example}

\subsection{}

In the above example (and in fact for all specialisations over $\Q$ with $\rho=20$),
we have seen that $\NS(Y)$ is defined over the ring class field $H(4d)$.
We conjecture that this is always the case which would give an analogue of Corollary \ref{Cor:Num}:

\begin{Conjecture}
\label{conj}
Let $Y$ be an Enriques surface whose universal cover $X$ is a singular K3 surface.
Let $d<0$ denote the discriminant of $X$.
Then $Y$ admits a model over the ring class field $H(d)$ with $\NS(Y)$ defined over $H(4d)$.
\end{Conjecture}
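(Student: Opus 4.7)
The starting point is Corollary \ref{Cor:Num}, which already provides a model of $Y$ over $H(d)$ with $\Num(Y)$ defined over $H(d)$. Since $\NS(Y)/\Num(Y)\cong\Z/2\Z$ is generated by the canonical class $K_Y$, the remaining Galois action on $\NS(Y)$ is measured by a collection of homomorphisms
\[
\chi_i: \Gal(\overline{H(d)}/H(d))\to\Z/2\Z,\qquad \sigma\mapsto(D_i^\sigma-D_i)/K_Y,
\]
where the $D_i\in\NS(Y)$ are fixed lifts of a chosen set of generators $\bar D_i\in\Num(Y)$. The conjecture is equivalent to the statement that each $\chi_i$ factors through the quotient $\Gal(H(4d)/H(d))$, or more concretely, that the lifts $D_i$ can be chosen $H(4d)$-rational.

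To realise this concretely, the plan is to work with an elliptic fibration on $Y$ whose natural generators (fibre components, Mordell-Weil contributions, and the two half-fibre classes $G_1,G_2$ with $F_i=2G_i$) span all of $\NS(Y)$, exactly as in the proof of Proposition \ref{Prop:NS}. Since $G_1-G_2=K_Y$ in $\NS(Y)$ but $G_1=G_2$ in $\Num(Y)$, any nontrivial Galois contribution beyond $\Num(Y)$ must swap the two half-fibres. The problem therefore reduces to showing that both $G_1$ and $G_2$ are defined over $H(4d)$, while the remaining generators are $H(d)$-rational by Theorem \ref{Thm:NS-H(d)}.

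For singular Enriques surfaces of base change type --- which by Proposition \ref{Prop:sing-Enr} accounts for all but at most 63 exceptions --- such a fibration is produced by the Shioda-Inose construction of \ref{ss:SI2} together with the translation trick of \ref{ss:HS}. After normalising Inose's pencil as in the proof of Proposition \ref{Prop:NS}, the half-fibres of the induced fibration on $Y$ sit at the fixed points of the base change involution on $\PP^1$, which are square roots of an element of $H(d)$ constructed from the $j$-invariants of $E$ and $E'$; concretely these can be arranged to equal (or to coincide up to bounded twist with) $2$-torsion $x$-coordinates of the CM elliptic curves appearing in the Shioda-Inose structure. By the imaginary quadratic analogue of the Kronecker-Weber theorem invoked already in the proof of Corollary \ref{Cor:Km} (cf.~\cite[\S II Thm.~5.6]{Si}), those $x$-coordinates generate exactly $H(4d)$ over $H(d)$. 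Hence both half-fibres become $H(4d)$-rational, and the argument closes in the base change case.

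The main obstacle will be the 62 or 63 K3 surfaces listed in Exception \ref{exc}, where the Shioda-Inose structure does not directly furnish a base change Enriques involution. For each such surface one must exhibit, as done for Example \ref{Ex1} in \ref{ss:exc}, an alternative jacobian elliptic fibration admitting an Enriques involution of base change type and then check that its ramification locus is also governed by $2$-torsion of CM-curves --- plausibly by analysing the fibrations via the Kneser-Nishiyama gluing method used in the proof there and tracking the transcendental lattice through the intermediate Kummer quotient. A secondary subtlety is that the normalising twist needed to bring Inose's pencil into the shape above is only guaranteed to live in $H(d)$ up to a quadratic ambiguity, so one must verify, along the lines of the twisting argument in Lemma \ref{Lem:MW1}, that this ambiguity is in all cases absorbed into $H(4d)/H(d)$ rather than producing a further extension.
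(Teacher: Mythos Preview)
The statement is a \emph{conjecture}, and the paper does not prove it in full; it only establishes the special case of Enriques quotients arising from the Shioda--Inose base change of \S\ref{ss:SI2} (see the Corollary following Lemma \ref{Lem:4d}). Your proposal aims at essentially the same partial result via the same overall strategy: locate the half-fibres $G_1,G_2$ of the induced elliptic fibration on $Y$ and show that the quadratic extension they generate is contained in $H(4d)$. So far so good, and your reduction via the characters $\chi_i$ is a clean way to say this.

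There is, however, a genuine gap in the mechanism you propose for the key step. The ramification points of the base change underlying the Shioda--Inose construction sit at $\pm 2B$ in the model \eqref{eq:X'}, where $B^2=(1-j/12^3)(1-j'/12^3)$. This $B$ is \emph{not} a $2$-torsion $x$-coordinate of either CM curve, nor can it be ``arranged'' to be one by a twist: it is a product of values of the Weber function $\gamma_3=\sqrt{j-12^3}$. The paper obtains $B\in H(4d)$ by invoking Schertz's theorem on singular values of Weber functions (Lemma \ref{Lem:4d}), or alternatively by a geometric argument that transports the question to the $I_0^*$ fibres of the Kummer surface $X'$ and then applies Corollary \ref{Cor:Km}. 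Your appeal to the Kronecker--Weber analogue for $2$-torsion, as used in Corollary \ref{Cor:Km}, is in the right spirit for this second route, but it does not apply directly to $B$; one really has to pass through $X'$ (or cite Schertz) rather than claim a direct identification with torsion coordinates.

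There is also a scope issue you should be aware of. Proposition \ref{Prop:sing-Enr} asserts that, outside the 62--63 exceptions, the covering K3 surface $X$ admits \emph{some} Enriques involution of base change type. It does not assert that \emph{every} Enriques quotient of $X$ arises this way; indeed by \cite{Ohashi} a K3 surface can have many non-conjugate Enriques involutions. So even after handling the exceptional list, the argument you outline would only recover the paper's partial Corollary, not the full conjecture. The paper is explicit that the general case remains open.
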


The above one-dimensional family provides small evidence for this conjecture.
Our main motivation stems from the base change construction of Enriques involutions  in the framework of Shioda-Inose structures as investigated in Section \ref{s:Enr}.
By Proposition \ref{Prop:sing-Enr}, almost every possible singular K3 surface admits such an Enriques involution.
In terms of the model \eqref{eq:WF-X2},
the Enriques quotient $Y$ attains multiple fibres at the ramification points of the underlying base change, i.e.~at $\pm 2B$.
Recall from \eqref{eq:Inose} that $B^2=(1-j/12^3)(1-j'/12^3)$,
so there is a quadratic Galois action on $\NS(Y)$ unless $B\in H(d)$.
Note that $B$ can be interpreted in terms of  the Weber function $\sqrt{j-12^3}$ where $j$ now denotes the usual modular function.
The values of Weber functions at CM-point have been studied extensively starting from Weber.
In the present situation, Schertz proved that for singular $j$-values, $\sqrt{j-12^3}\in H(4d)$ \cite{Schertz}.
This implies:

\begin{Lemma}[Schertz]
\label{Lem:4d}
In the above setting, one has $B\in H(4d)$.
\end{Lemma}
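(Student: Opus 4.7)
The plan is to reduce the lemma to Schertz's theorem on the Weber function values at singular moduli. Recall from \ref{ss:SI} and \ref{ss:K3-fields} that in the Shioda-Inose setting the singular K3 surface $X$ arises from a pair of elliptic curves $E,E'$ with complex multiplication in orders of $K=\Q(\sqrt{d})$, and that both j-invariants $j=j(E)$, $j'=j(E')$ are singular moduli lying in $H(d)\subseteq H(4d)$. The defining relation for $B$ from \eqref{eq:Inose} rewrites as
\[
12^6 B^2 = (12^3-j)(12^3-j') = (j-12^3)(j'-12^3),
\]
so the question is whether the product $(j-12^3)(j'-12^3)$ is a square in $H(4d)$.

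Next, I would invoke Schertz's theorem \cite{Schertz}: for any singular value $j_0$ of the modular $j$-function attached to an order in $K$ whose class field sits inside $H(4d)$, one has $\sqrt{j_0-12^3}\in H(4d)$. Applying this separately to $j$ and to $j'$ yields
\[
\sqrt{j-12^3},\;\sqrt{j'-12^3}\;\in\;H(4d).
\]
Multiplying the two square roots and dividing by the rational constant $12^3$ gives
\[
B = \pm\frac{\sqrt{j-12^3}\cdot\sqrt{j'-12^3}}{12^3}\;\in\;H(4d),
\]
which is the claim.

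The main obstacle I expect is to verify cleanly that Schertz's theorem really delivers the square root in $H(4d)$ rather than in some larger ring class field associated to the particular orders $\mathcal{O}=\text{End}(E)$ and $\mathcal{O}'=\text{End}(E')$ dictated by \eqref{eq:E}. In principle these two orders can differ from $\mathcal{O}_d$, but the product structure $T(E\times E')=T(X)$ of discriminant $d$ constrains them so that both $H(\mathcal{O})$ and $H(\mathcal{O}')$ are contained in $H(4d)$, by the standard containment $H(\mathcal{O}_1)\subseteq H(\mathcal{O}_2)$ whenever $\mathcal{O}_2\subseteq\mathcal{O}_1$ together with the fact that passing from $d$ to $4d$ absorbs a factor of $2$ in the conductor. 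Once this bookkeeping about conductors is settled, the conclusion is immediate from the factorisation above.
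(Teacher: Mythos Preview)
Your approach is correct and matches the paper's primary argument: the paper simply invokes Schertz's theorem that $\sqrt{j-12^3}\in H(4d)$ for singular $j$-values and deduces the lemma exactly as you do, by factoring $12^6 B^2=(j-12^3)(j'-12^3)$.

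Two minor remarks. First, your bookkeeping worry about the endomorphism orders of $E$ and $E'$ is unnecessary in this setting: in the Shioda--Mitani representation \eqref{eq:E} one computes directly that $a\tau=\frac{-b+\sqrt d}{2}$ and $\tau'=\frac{b+\sqrt d}{2}$, so both curves have CM by the order of discriminant exactly $d$, and both $j$-invariants already lie in $H(d)$. Schertz's result then applies verbatim with the same $d$ for each factor.

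Second, the paper also sketches an independent \emph{geometric} proof, which you do not give. It runs via the Kummer surface $X'$ in the Shioda--Inose structure: the elliptic fibration \eqref{eq:X'} on $X'$ has its two $I_0^*$ fibres at $t=\pm 2B$, and Corollary~\ref{Cor:Km} guarantees a model of $X'$ with $\NS(X')$ defined over $H(4d)$. One then argues that the fibration \eqref{eq:X'} is rigid enough (determined by its singular fibres up to M\"obius transformation) that no model over $H(d)$ can avoid a Galois action swapping the $I_0^*$ fibres unless $B\in H(4d)$. This alternative buys extra information the citation to Schertz does not: it shows that when $B\notin H(d)$ the Galois action on the $I_0^*$ fibres of $X'$, and hence on the multiple fibres of the Enriques quotient $Y$, is genuinely nontrivial --- which is precisely what is used downstream in Corollary~\ref{Lem:B}.
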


We sketch an alternative proof of Lemma \ref{Lem:4d}.
It is based on a geometric approach that will also carry information about the Enriques surface $Y$ (and its elliptic fibration with fibre of type $II^*$).
Consider the Kummer surface $X'$ from the Shioda-Inose structure.
In general, it has fibres of type $I_0^*$ where $Y$ has the multiple fibres 
(if $E\cong E'$, there could be fibres of type $I_1^*$ or $IV^*$, cf.~\ref{ss:SI}).
By Corollary \ref{Cor:Km}, $X'$ has a model with $\NS(X')$ defined over $H(4d)$.
In particular, every elliptic fibration of $X'$ can be defined over $H(4d)$ with all of $\NS$ defined there as well.
We apply this argument to the elliptic fibration on $X'$  induced  from \eqref{eq:WF-X2}:
\begin{eqnarray}
\label{eq:X'}
\;\;\;\;\;\;\; X':\;\; y^2 = x^3 - 3c^2B^2A^3(t^2-4B^2)^2 x + c^3B^2A^3 (t-2B^2) (t^2-4B^2)^3.
\end{eqnarray}
Assume that $B\not\in H(d)$ and denote $L=H(d)(B)$.
By \cite{Sandwich}, the singular fibres of $X'$ predict  the Weierstrass form \eqref{eq:X'} (in case $AB\neq 0$) up to M\"obius transformation.
This property holds generally for constants $A,B,c$,
but in the present situation, $A$ and $B$ are related to the j-invariants of $E, E'$ by \eqref{eq:Inose}.
Upon applying M\"obius transformations, one can thus show that the above jacobian elliptic fibration does not admit a model over $H(d)$ \emph{without} $\Gal(L/H(d))$-action interchanging the $I_0^*$ fibres.
By Corollary \ref{Cor:Km}, one obtains that $B\in H(4d)$.
This proves Lemma \ref{Lem:4d}.

\begin{Corollary}
Conjecture \ref{conj} holds true for any singular Enriques surface arising from the Shioda-Inose structure as in Section \ref{s:Enr}.
\end{Corollary}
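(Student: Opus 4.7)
The plan is to adapt the explicit generator analysis of the proof of Proposition~\ref{Prop:NS} to the general Shioda--Inose setting of Section~\ref{s:Enr}. First I would fix Inose's pencil on $X$ in the form \eqref{eq:WF-X2} over $H(d)$; by Theorem~\ref{Thm:NS-H(d)} we may further arrange that $\NS(X)$ is defined over $H(d)$. By Corollary~\ref{Cor}, the Enriques involution $\tau$ obtained as the base change involution composed with translation by the anti-invariant section $P$ is then automatically defined over $H(d)$, so $Y=X/\tau$ has a model over $H(d)$ as required by the first half of Conjecture~\ref{conj}.

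To bound the field of definition of $\NS(Y)$, I would exhibit an explicit generating set coming from the induced elliptic fibration on $Y$. This fibration has a single fibre of type $II^*$ (the image of the two $II^*$ fibres of $X$, which are exchanged by the base change involution $\imath$), a bisection $R$ that is the push-down of $O$ and $P$, and two multiple fibres $F_i=2G_i$ located over the ramification points $t=\pm 2B$ of the underlying quadratic base change. Exactly as in the proof of Proposition~\ref{Prop:NS}, these twelve rational curves generate $\NS(Y)$: the eight non-identity components of the $II^*$ fibre together with $R$ and $G_1$ already assemble into a unimodular sublattice $U+E_8(-1)\subset\NS(Y)$ of the full rank~$10$, while $G_2=G_1+K_Y$ contributes the index-two enlargement carrying the canonical torsion.

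It then remains to track the Galois action on these generators. The components of the $II^*$ fibre and the bisection $R$ are images, under the $H(d)$-defined quotient $X\to Y$, of divisors in $\NS(X)$, hence defined over $H(d)$. The half-fibres $G_1,G_2$ are supported over the distinct points $\pm 2B\in\PP^1$; by Lemma~\ref{Lem:4d} we have $B\in H(4d)$, so these two points are separately rational over $H(d)(B)\subseteq H(4d)$, and consequently the geometrically integral smooth genus-one curves $G_1,G_2$ descend individually to $H(4d)$. Thus $\NS(Y)$ is generated by divisors defined over $H(4d)$, which establishes Conjecture~\ref{conj} in the Shioda--Inose case.

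The main obstacle is essentially packaged in Lemma~\ref{Lem:4d}: everything rests on the containment $B\in H(4d)$, supplied either by the Weber-function result of Schertz or, equivalently, by the geometric input from Corollary~\ref{Cor:Km} applied to the Kummer fibration \eqref{eq:X'}. Beyond this, some bookkeeping is needed in the degenerate subcases of \ref{ss:SI} where $E\cong E'$ and the two fibres of type $I_0^*$ on the Kummer quotient degenerate to $I_1^*$ or $IV^*$; in each such case the analogous description of $\NS(Y)$ and its generators goes through, with the only Galois-sensitive pieces still being the multiple fibres located over $\pm 2B$.
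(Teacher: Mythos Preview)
Your proposal is correct and follows essentially the same approach as the paper. The paper does not isolate a separate proof for this corollary; the argument is contained in the text surrounding Lemma~\ref{Lem:4d}, which observes that the multiple fibres of $Y$ sit at $\pm 2B$ and then invokes $B\in H(4d)$. You have simply made this explicit by transplanting the generator analysis of Proposition~\ref{Prop:NS} (the $II^*$ fibre, the bisection $R$, and the half-fibres $G_1,G_2$) to the general Shioda--Inose setting, which is exactly what the paper intends.

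One small inaccuracy in your bookkeeping remark: the $IV^*$ degeneration on the Kummer quotient occurs only for $j(E)=j(E')=0$, which by Table~\ref{tab:MW} has Mordell--Weil rank zero and therefore never enters the Enriques construction of Section~\ref{s:Enr}. The only genuine degenerate subcase is the $\MW$-rank one situation of \ref{ss:1-cong} with $d\equiv -7\bmod 8$, where one fixed fibre of $\imath$ is the $I_2$ fibre; your generator argument goes through there unchanged.
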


The geometric proof of Lemma \ref{Lem:4d}  is of particular interest to us, 
since the statement about the Galois action on the $I_0^*$ fibres of $X'$ 
carries over to the multiple fibres of the corresponding elliptic fibration of the Enriques surface $Y$
and vice versa.
Centrally, we use once again that a model of a K3 or Enriques surface with $\NS$ defined over a fixed field has \emph{all} elliptic fibrations (with or without section) defined over this field as well.
Hence we can move freely between models and elliptic fibrations.
Thus we obtain:

\begin{Corollary}
\label{Lem:B}
If $B\not\in H(d)$,
then any model over $H(d)$ of the Enriques surface $Y$ 
admits a non-trivial Galois action of $\Gal(H(4d)/H(d))$ on $\NS(Y)$.
\end{Corollary}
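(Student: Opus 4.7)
The plan is to argue by contradiction, using the ``all fibrations principle'' emphasized in the paragraph preceding the corollary: on a K3 or Enriques surface, a model with $\NS$ defined over a field $L$ automatically has every elliptic fibration, together with all its fibre components, sections, and multiple fibres, individually defined over $L$.

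First I would suppose that some $H(d)$-model of $Y$ has $\Gal(H(4d)/H(d))$ acting trivially on $\NS(Y)$. Since the full absolute Galois action on $\NS(Y)$ factors through $\Gal(H(4d)/H(d))$ for this model (which follows from Corollary~\ref{Cor:Num} for $\Num(Y)$, together with Lemma~\ref{Lem:4d} to handle the two-torsion summand generated by the multiple fibres), triviality of the $\Gal(H(4d)/H(d))$-action would imply that $\NS(Y)$ is actually defined over $H(d)$. Applying the all fibrations principle, the elliptic fibration on $Y$ obtained as the Enriques quotient of Inose's fibration \eqref{eq:WF-X2} would then be defined over $H(d)$, and each of its two multiple fibres $F_1,F_2$ would individually be defined over $H(d)$ as classes in $\NS(Y)$.

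Next I would locate these multiple fibres on the base $\PP^1$. The Enriques quotient $X\to Y$ is by the involution $\tau=\boxplus P\circ\imath$, so the multiple fibres of $Y$ come precisely from the two smooth fibres of $X$ that are fixed by the base change involution $\imath$. The Kummer quotient $X\to X'$ by the Nikulin involution $\jj=\imath\circ(-1)$ acts identically on the base, so these same fixed fibres descend to the two $I_0^*$ fibres of the fibration \eqref{eq:X'} on $X'$, located at $t=\pm 2B$. Matching base coordinates, the multiple fibres of $Y$ sit at the same pair $\pm 2B$, which together with the previous step forces $\pm 2B\in H(d)$ and hence $B\in H(d)$, contradicting the hypothesis. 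Since the assumption $B\notin H(d)$ was generic, this shows that for every $H(d)$-model of $Y$ the $\Gal(H(4d)/H(d))$-action on $\NS(Y)$ must be non-trivial.

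The main obstacle I expect is ensuring that the base parameter of the Enriques fibration on $Y$ can be normalized to coincide with the parameter $t$ in \eqref{eq:X'} over $H(d)$, rather than only over $H(4d)$, so that the multiple fibres genuinely sit at $\pm 2B$ and not at some Galois conjugate obscured by an $H(4d)$-rational M\"obius transformation. This is the same normalization issue that was addressed at the end of the proof of Lemma~\ref{Lem:4d}, and once more the all fibrations principle allows one to move freely between compatible $H(d)$-models of $X$, $X'$ and $Y$, which should close the gap.
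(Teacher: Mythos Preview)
Your proposal is correct and follows essentially the same route as the paper: both arguments rest on the ``all fibrations principle'' and on locating the multiple fibres of the induced elliptic fibration of $Y$ (equivalently, the $I_0^*$ fibres of $X'$ in \eqref{eq:X'}) at $t=\pm 2B$, so that a trivial Galois action would force $B\in H(d)$. The only cosmetic difference is that the paper phrases the key step on the Kummer surface $X'$ and then transfers it to $Y$, whereas you work directly on $Y$; your acknowledged normalization obstacle is exactly the M\"obius-transformation issue the paper handles in its geometric proof of Lemma~\ref{Lem:4d}.
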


We have seen an instance of this phenomenon in Example \ref{Ex:Gal}.
The same reasoning implies a non-trivial action of $\Gal(\Q(a,\sqrt{-a})/\Q(a))$ on $\NS(Y_a)$ 
for all $\Q(a)$-models of members $Y_a$ of the family $\mY$.

The above results allow us to draw an analogy to the study of automorphisms of Enriques surfaces (cf.~\cite{BP}, \cite{MN}).
Namely we have exhibited two kind of singular Enriques surfaces over $H(d)$ -- one with cohomologically trivial Galois action and one with numerically, but not cohomologically trivial Galois action.




\subsection{}

We conclude this paper with the corresponding classification problem for singular Enriques surfaces.
Note that by the above reasoning, at least the second problem is more complicated than for K3 surfaces (as solved in \cite{S-NS}).

\begin{Problem}
The following two questions concern singular Enriques surfaces either up to $\bar\Q$- or up to $L$-isomorphism:
\begin{enumerate}
 \item 
For a given number field $L$ (or all number fields of bounded degree), classify all singular Enriques surfaces with $\Num$ or $\NS$ defined over $L$.
\item
Determine all singular Enriques surfaces over $L=\Q$ with trivial Galois action on $\Num$ or $\NS$.
\end{enumerate}
\end{Problem}


\begin{thebibliography}{99}


 \bibitem{BHPV}
 Barth, W., Hulek, K., Peters, C., van de Ven, A.:
 \emph{Compact complex surfaces}.
 Second edition,
 Erg.~der Math.~und ihrer Grenzgebiete,
 3.~Folge, Band {\bf 4}.~Springer (2004), Berlin.
 
 \bibitem{BP}
 Barth, W., Peters, C.:
 \emph{Automorphisms of Enriques surfaces},
 Invent.~Math.~{\bf 73} (1983), no.~3, 383--411.


\bibitem{Beau}
Beauville, A.:
\emph{On the Brauer group of Enriques surfaces},
 Math.~Res.~Lett.~{\bf 16}  (2009),  no.~6, 927--934.

\bibitem{BT}
Bogomolov, F., Tschinkel, Y.:
\emph{Density of rational points on Enriques surfaces},
Math.~Res.~Lett.~{\bf 5} (1998), no.~5, 623--628.

%
%

\bibitem{C} Cox, D.~A.: \emph{Primes of the Form $x^2+ny^2:$ Fermat, Class Field Theory, and  
Complex Multiplication}, Wiley Interscience
(1989).




\bibitem{ES} Elkies, N.~D., Sch\"utt, M.: \emph{Modular forms and K3 surfaces}, preprint (2008), arXiv: 0809.0830.


\bibitem{ES-19} Elkies, N.~D., Sch\"utt, M.:
\emph{K3 families of high Picard rank}, in preparation.



\bibitem{G} Gross, B.~H.: \emph{Arithmetic on elliptic curves with complex multiplication}. With an appendix by B.~Mazur. Lect.~Notes in Math.~{\bf 776}, Springer (1980).



\bibitem{HS}
Hulek, K., Sch\"utt, M.:
\emph{Enriques Surfaces and jacobian elliptic K3 surfaces},
to appear in Math.~Z.,
preprint (2009),
arXiv: 0912.0608.


\bibitem{Inose} Inose, H.:
 \emph{Defining equations of singular $K3$ surfaces and a notion of isogeny},
  in: \emph{Proceedings of the International Symposium on Algebraic Geometry}
  (Kyoto Univ., Kyoto, 1977),  Kinokuniya Book Store, Tokyo (1978), 495--502.

\bibitem{Keum}
Keum, J.~H.:
\emph{Every algebraic Kummer surface is the K3-cover of an Enriques surface},
Nagoya Math.~J.~{\bf 118}  (1990), 99--110.

%


\bibitem{K-E}
Kond\=o, S.: 
\emph{Enriques surfaces with finite automorphism groups},  
Japan.~J.~Math.~(N.S.)  {\bf 12 } (1986),  no.~2, 191--282.

%

%
%
%
%
%

\bibitem{Mo} Morrison, D.~R.: \emph{On K3 surfaces with large Picard number},  Invent.~Math.~{\bf 75}  (1984),  no.~1, 105--121.


%
 \bibitem{MN}
 Mukai, S., Namikawa, Y.:
 \emph{Automorphisms of Enriques surfaces which act trivially on the cohomology groups},
 Invent.~Math.~{\bf 77} (1984), no.~3, 383--397. 

\bibitem{N} Nikulin, V.~V.: \emph{Integral symmetric bilinear forms and some of their applications}, Math.~USSR Izv.~{\bf 14}, No.~1 (1980), 103-167.


\bibitem{Nishi} Nishiyama, K.-I.: \emph{The Jacobian fibrations on some K3 surfaces and their Mordell-Weil groups}, Japan.~J.~Math.~{\bf 22} (1996), 293--347.



 
 \bibitem{Ohashi}
 Ohashi, H.:
 \emph{On the number of Enriques quotients of a $K3$ surface},
 Publ.~RIMS  {\bf 43}  (2007),  no.~1, 181--200.


\bibitem{PSS} Piatetski-Shapiro , I.~I., Shafarevich, I.~R.: \emph{Torelli's theorem for algebraic surfaces of type ${\rm K}3$}, Izv.~Akad.~Nauk SSSR Ser.~Mat.~{\bf 35} (1971), 530--572.

\bibitem{Schertz}
Schertz, R.:
\emph{Die singul\"aren Werte der Weberschen Funktionen $\mathfrak f, \mathfrak f_{1},\mathfrak f_{2}, \gamma _{2}, \gamma _{3}$},
J.~Reine Angew.~Math.~{\bf 286/287}  (1976), 46--74.

\bibitem{S-fields} Sch\"utt, M.:
 \emph{Fields of definition of singular K3 surfaces},
 Communications in Number Theory and Physics {\bf 1}, 2 (2007), 307--321.

\bibitem{S-MMJ} Sch\"utt, M.:
 \emph{Arithmetic of a singular K3 surface},
Michigan Math.~J.~{\bf 56} (2008), 513--527.

%

\bibitem{S-NS} 
Sch\"utt, M.: \emph{K3 surfaces of Picard rank 20 over $\Q$}, Algebra \& Number Theory {\bf 4} (2010), no.~3, 335--356.
 
 \bibitem{SSh}
 Sch\"utt, M., Shioda, T.:
 \emph{Elliptic surfaces},
Algebraic geometry in East Asia - Seoul 2008, 
Advanced Studies in Pure Math.~{\bf 60} (2010), 51-160.  
%

\bibitem{Sert}
Sert\"oz, A.~S.:
\emph{Which singular K3 surfaces cover an Enriques surface},
Proc.~AMS {\bf 133} (2005), no.~1, 43--50

\bibitem{Shafa} Shafarevich, I. R: \emph{On the arithmetic of singular $K3$-surfaces}, in: \emph{Algebra and analysis} (Kazan 1994), de Gruyter (1996), 103--108.


\bibitem{Shimada-T} Shimada, I.:
 \emph{Transcendental lattices and supersingular reduction lattices
 of a singular $K3$ surface}, 
Trans.~Amer.~Math.~Soc.~{\bf 361}  (2009), 909--949. 


\bibitem{SZ} Shimada, I., Zhang, D.~Q.:
\emph{Classification of extremal elliptic $K3$ surfaces
and fundamental groups of open $K3$ surfaces},
Nagoya Math.~J.~{\bf 161} (2001), 23--54.


\bibitem{Shimura} Shimura, G.: \emph{Introduction to the arithmetic theory of automorphic functions}, Iwanami Shoten \& Princeton University Press (1971).



\bibitem{ShMW} Shioda, T.: \emph{On the Mordell-Weil lattices}, Comm.~Math.~Univ.~St.~Pauli {\bf 39} (1990), 211--240.


\bibitem{Sandwich} Shioda, T.:
\emph{Kummer sandwich theorem of certain elliptic K3 surfaces},
Proc.~Japan~Acad.~{\bf 82}, Ser.~A (2006), 137--140.


\bibitem{Sh-Murre}
Shioda, T.:
\emph{Correspondence of elliptic curves and Mordell-Weil lattices of certain elliptic K3 surfaces},
in \emph{Algebraic Cycles and Motives}, Vol.~2, 
LMS Lect.~Note Ser.~{\bf 344}, Cambridge Univ. Press (2007),  319--339.



\bibitem{SI} Shioda, T., Inose, H.:
 \emph{On Singular $K3$ Surfaces},
 in: Baily, W.~L.~Jr., Shioda, T.~(eds.),
 \emph{Complex analysis and algebraic geometry},
 Iwanami Shoten, Tokyo (1977), 119--136.


\bibitem{SM} Shioda, T., Mitani, N.:
 \emph{Singular abelian surfaces and binary quadratic forms},
 in: \emph{Classification of algebraic varieties
 and compact complex manifolds},
 Lect.~Notes in Math.~{\bf 412} (1974), 259--287.

\bibitem{Si} Silverman, J.~H.:
 \emph{Advanced Topics in the Arithmetic of Elliptic Curves.}
 Graduate Texts in Math.~{\bf 151}, Springer (1994).



\bibitem{Sterk}
Sterk, H.:
\emph{Finiteness results for algebraic $K3$ surfaces},
Math.~Z.~{\bf  189}  (1985),  no.~4, 507--513.

 
 
 \bibitem{Tate} Tate, J.: {\it Algorithm for determining the type
 of a singular fibre in an elliptic pencil}, in: {\it Modular
 functions of one variable IV} (Antwerpen 1972), SLN {\bf 476}
 (1975), 33--52.


\bibitem{Wb} Weinberger, P.~J.:
 \emph{Exponents of the class groups of complex quadratic fields},
 Acta Arith.~{\bf 22} (1973), 117--124.



\end{thebibliography}
\end{document}